 \theoremstyle{plain}
 \newtheorem{thm}{Theorem}[section]
 \numberwithin{equation}{section} 
 \newtheorem{prop}[thm]{Proposition} 
 \theoremstyle{definition}
 \newtheorem{defn}[thm]{Definition}
 \theoremstyle{remark}
 \newtheorem{rem}[thm]{Remark}
\newtheorem{notation}[thm]{Notation}
\newcommand {\R}{\mathbb{R}}
\newcommand {\Z}{\mathbb{Z}}
\newcommand {\g}{\mathfrak{g}}
\newcommand {\lmet} {\langle \! \langle}
\newcommand {\rmet} {\rangle \! \rangle}
\newcommand{\J}{\mathbf{J}}
\newcommand{\tr}{\operatorname{tr}\,}
\newcommand{\diag}{\operatorname{diag}}
\newcommand{\restr}[1]{\vrule height3ex width.4pt depth1.4ex\lower1.4ex\hbox{\scriptsize $\,#1$}}
\newcommand{\rrestr}[1]{\vrule height2ex width.4pt depth0.9ex\lower0.9ex\hbox{\scriptsize $\,#1$}}
\newcommand{\II}{\mathbb{I}}
\newcommand{\ed}{\mathbf{d}}   
\newcommand{\Proj}{\mathbb{P}} 
\newcommand{\Ad}{\mathrm{Ad}}  
\newcommand{\ad}{\mathrm{ad}}  
\newcommand{\D}{\mathbf{D}}    
\newcommand{\Vrig}{\mathcal{V}_\mathrm{RIG}}
\newcommand{\Vint}{\mathcal{V}_\mathrm{INT}}
\newcommand{\vecR}{\mathbf{R}}
\newcommand{\vecr}{\mathbf{r}}
\newcommand{\Ar}{\mathbf{Ar}} 
\newcommand{\Sm}{\mathbf{Sm}} 
\newcommand{\Gaxi}{G^\mathrm{axi}} 
\newcommand{\gaxi}{\mathfrak{g}^\mathrm{axi}} 
\newcommand{\Gammaaxi}{\Gamma^\mathrm{axi}} 
\newcommand{\Gasym}{G^\mathrm{asym}} 
\newcommand{\gasym}{\mathfrak{g}^\mathrm{asym}} 
\newcommand{\Gammaasym}{\Gamma^\mathrm{asym}} 
\newcommand{\e}{\mathbf{e}} 
\title{A Hamiltonian Study Of The Stability and Bifurcations For The Satellite Problem}
\date{}
\author{Mu\~noz-Lecanda, M., Rodr\'iguez-Olmos M., Teixid\'o-Rom\'an M.}
\begin{document}

\maketitle

\begin{abstract}\let\thefootnote\relax\footnote{MSC 2010: 70H14; 70H33; 37J20; 37J25\\matmcml@ma4.upc.edu\\
miguel.rodriguez.olmos@ma4.upc.edu\\
miguel.teixido@ma4.upc.edu\\
Department of Applied Mathematics IV.
Technical University of Catalonia.
Barcelona,
Spain.}
We study the dynamics of a rigid body in a central gravitational field modeled as a Hamiltonian system with continuous rotational symmetries following the geometrical framework of Wang \emph{et al}.
Novelties of our work are the use the Reduced Energy Momentum for the stability analysis and the treatment of axisymmetric bodies. We explicitly show the existence of new relative equilibria and  study their stability and
bifurcation patterns.

\end{abstract}
\tableofcontents
\section{Introduction}

This article studies the dynamics of two massive bodies of
finite extent subject to their mutual gravitational
attraction. We assume that one of the bodies, the primary, has
a mass much greater than the other, the satellite, and that
the primary body is spherically symmetric. The problem is then equivalent
to the dynamics of a single rigid body in a central gravitational
field. This system has the mathematical structure of a Hamiltonian
system with symmetries. The symmetry assumption allows for considerable
simplifications and provides a valuable approximation
from which additional effects can be further studied using perturbation theory.

In the traditional approach to this problem (see \cite{meirovitch1970methods}),
the additional assumption that the motion of the center of mass of the satellite is unaffected by the finite extent of the body was frequently used. That is, the orbit described by the center of mass of the satellite is the same as if the satellite were replaced by a point mass. This further assumption is known in the literature as the \emph{restricted problem}. In our geometric approach we will not adopt this assumption, we will consider the full coupling of the system, or \emph{unrestricted problem}.

Since the global dynamics of this system may be very complicated
we will focus on the study of its relative equilibria, which are dynamical
evolution orbits which are contained in  group orbits. Relative
equilibria act as organizing centers for the dynamics of a
Hamiltonian system with symmetries and therefore the study of the
set of relative equilibria gives important information about the
qualitative behavior of the dynamical flow. There are several
classical studies on the coupling between orbital and attitude
motion for the satellite motion but in most of them the natural geometric and group-theoretic
properties of the problem are not fully exploited. The first geometric treatment of this
system is \cite{wang1990hamiltonian}, where Poisson reduction
and the Energy-Casimir method were the main tools employed in giving a full description of relative equilibria for large
orbits. In this article we will extend that work to the low-orbit regime, and
in particular we provide an analytic description of  the complete set of relative equilibria that
exist for small values of the orbital radius.

One novelty of our geometric treatment is that  we also study the axisymmetric
case, where in addition to the spatial rotational symmetry there is a $S^1$ body symmetry corresponding to rotations about a symmetry axis of the satellite. This also expands the work  of \cite{wang1990hamiltonian}, where
only the spatial rotational symmetry was considered. This axisymmetric case has been previously studied in \cite{beck-phd} using the Energy-Casimir method, whereas we treat this case via the Reduced Energy
Momentum method.

For both the asymmetric and axisymmetric cases we have found a complete description of all the existing relative equilibria families. In the axisymmetric case we have shown that a family of conical equilibria exists for arbitrary large orbits. This is a new family of relative
equilibria that does not seem to have been described in the literature before. In fact, in  \cite{o2004steady}, based on Routhian reduction and numerical
continuations, it is suggested that there were no conical
equilibria for large orbits but that  conclusion is due to the fact that their continuation path only detected the conical equilibria for very small orbits. Had they used a different continuation path they would have found the conical equilibria.

\paragraph{Organization of the paper.}

In Section \ref{sec:model} we describe the system modeling the motion of a satellite under the second order potential. In Section \ref{sec:asym} we analytically characterize existence conditions for the different relative equilibria of the asymmetric satellite.  In particular, in Proposition \ref{prop-conical} we  explicitly parametrize  conical equilibria, a family of  motions, existing for small orbital radii, in which the center of the orbit does not coincide with the center of the potential. To our knowledge, until now only through numerical descriptions (\cite{wang1990hamiltonian,o2004steady}) of this phenomenon existed. In Subsection \ref{asym-isotropy} we find that the different families can be classified in a very clean way using only symmetry considerations. In Subsections \ref{asym-orth-stab}, \ref{asym-par-stab}, \ref{asym-con-stab} we use the Reduced Energy Momentum method to study the stability of the different families. In Subsection \ref{asym-bifurcations} we describe the bifurcation schemes for the different relative equilibria. It appears that
 this is the first analytic treatment in the low orbit regime.

In Section \ref{axisymmetric} we study the axisymmetric satellite. Propositions \ref{prop:axi-orth} and  \ref{conical-charact} give  explicit existence conditions for all possible relative equilibria. In particular showing that conical equilibria exist for any value of the orbital radius as opposed to the situation in the asymmetric case (see Remark \ref{rem conic radius}).
In Propositions \ref{cylstab}, \ref{hypstab}, \ref{conicalstab} and \ref{isolatedstab} the Reduced Energy Momentum is applied to all the different families to provide stability and instability ranges. In Subsection \ref{sec:axi-bifurcations} we describe the different bifurcations between the different families of relative equilibria. The
behavior observed is very similar to what happens in the
restricted problem (see \cite{beck-phd}). The main difference is that
the restricted problem can not predict the non-orthogonality of the conical
solutions.

\paragraph{Acknowledgements.}
The authors would like to acknowledge the financial support of the Ministerio de Ciencia e Innovaci\'on (Spain), project
MTM2011-22585 and AGAUR, project 2009 SGR:1338. M. Teixid\'o-Roma\'an also thanks the support of a FI-Agaur PhD Fellowship. M. Rodr\'{\i}guez-Olmos thanks the support of the EU-ERG grant ``SILGA".

\section{Model of a Rigid Body in a Central Field}
\label{sec:model}

\subsection{Configuration Space}
We will consider a spherically symmetric primary body of mass ${m}_1$ and a satellite of mass ${m}_2$ and arbitrary mass distribution. We will assume that  ${m}_1$ is much larger than ${m}_2$, that is, the satellite does not affect the motion of the primary. Therefore the primary will be at rest in an inertial reference.

With these assumptions the configuration space of the satellite in the central field generated by the primary is $Q=SO(3)\times \R^3$. A point in this manifold will be denoted in space coordinates by $q=(B,{\mathbf{r}})$. $B$ is the orthogonal matrix that realizes the transformation mapping the satellite from a reference configuration $\mathcal R$ to its orientation in space and ${\mathbf{r}}$ is the vector between the center of the field and the center of mass of the satellite. In body coordinates this configuration point is represented by $(B,{\mathbf{R}})$, where ${\mathbf{R}}=B^T {\mathbf{r}}$.

We denote a tangent vector $(\dot B,\mathbf{\dot r})$ at $(B,\mathbf{r})$, in space coordinates, as $(\Omega,\mathbf{\dot r})\in\mathbb{R}^3\times\mathbb{R}^3$. Here we have used the left trivialization of $T\mathrm{SO}(3)$ given by $\dot B=B\widehat\Omega$, where $\widehat \Omega$ is the $3\times 3$ matrix such that
$$\widehat\Omega u=\Omega\times u$$
for any $u\in\mathbb{R}^3$. Analogously, vectors in body coordinates are represented as $(\Omega,\mathbf{\dot R})$.

\subsection{Hamiltonian Description}
\label{hamiltoniandescr}
In order to simplify the algebraic manipulation of expressions that will appear, we will choose, according to \cite{beck-phd,wang1991steady}, the following units of mass, length and time
\begin{equation}m= m_2,\quad \ell=\sqrt{\frac{\mbox{tr}({\mathbf I})}{ m_2}}, \quad t=\sqrt{\frac{l^3}{{Gm_1}}}\label{units}\end{equation} where $\mathbf I$ is the body-fixed inertia tensor and $G$ is the gravitational constant. Note that with respect to these units the satellite has unit mass and its inertia tensor satisfies $\mbox{tr}(\mathbf I)=1$. All the expressions appearing in this paper will be referred to these units.

The kinetic energy metric is given by the sum of the rotational and translational kinetic energies of the satellite. In space coordinates

\begin{equation*} K(\Omega,\mathbf{\dot r})=\frac{1}{2}\left({\Omega}\cdot {\mathbf I}\  {\Omega} +  \dot{{\mathbf{r}}}\cdot \dot{{\mathbf{r}}}\right),\end{equation*}
while in body coordinates
\begin{equation*}K(\Omega,\mathbf{\dot R})=\frac{1}{2} \lmet (\Omega,\mathbf{\dot R}) ,(\Omega,\mathbf{\dot R}) \rmet \end{equation*}
where $ \lmet \cdot ,\cdot \rmet$ is the Riemannian metric on $\mathrm{SO}(3)\times\mathbb{R}^3$ given by
\begin{equation}
 \lmet \cdot ,\cdot \rmet =\left[\begin{array}{cc}
        \mathbf I - \widehat {\vecR} \widehat {\vecR}  &\widehat{\vecR}\\
	-\widehat {\vecR} & \mbox{Id}_3
       \end{array}
 \right]
\label{metric}.
\end{equation}
Without loss of generality will choose a basis such that $\mathbf I$ takes a diagonal form.

\begin{rem}
\label{inertiatensor}
The inertia tensor has the form $\mathbf I=\diag(I_1,I_2,I_3)$ , and since $\tr \mathbf I=1$,  two parameters are enough to determine it. Also its values must satisfy the usual triangular inequalities ($I_i<I_j+I_k$) that in this case imply  $0<I_i<\frac{1}{2}$.
\end{rem}

The potential energy corresponding to the inverse square force field can be written in the integral form:
\begin{equation}V(B,\mathbf r)=-\int_{B\mathcal R} \frac{dm_2(\mathbf r') }{|\mathbf r+\mathbf r'|}=-\int_\mathcal R \frac{dm_2(\mathbf r') }{|\mathbf r+B\mathbf r'|}=-\int_\mathcal R \frac{dm_2(\mathbf r')}{|\mathbf R+\mathbf r'|}=V(\mathbf R) \label{potexacte2}\end{equation}
where $\mathbf{ {r}'}$ is the vector from the center of mass of the satellite to the point with mass density $d{m}_2(\mathbf{{r}'})$. Note that in body coordinates $(B,\mathbf R)$ the potential does not depend on $B$, which is a consequence of the rotational symmetry of the system.

Usually, the radial distance $|{\mathbf{R}}+{\mathbf{r}}'|$ is going to be much larger than the dimensions of the satellite. Therefore we can expand $|{\mathbf{R}}+{\mathbf{r}}'|^{-1}$ in power series of ${r}^{-1}=|\mathbf{ {R} }|^{-1}$. After a lengthy computation in Cartesian coordinates (see \cite{meirovitch1970methods} for more details), the classical second order approximation to the potential is
\begin{equation}V_2(\mathbf R)=-\frac{1}{R}-\frac{1}{2R^3}+\frac{3}{2R^5}\mathbf R \cdot \mathbf I \mathbf R.\label{2potential}\end{equation}
The second order system will be the Hamiltonian system on $T^*(\mathrm{SO}(3)\times\mathbb{R}^3)$ governed by the Hamiltonian
\begin{equation}H_2=K+V_2\label{hamiltonian2}\end{equation} which is an approximation of the exact Hamiltonian \begin{equation}H_{\mbox{exact}}=K+V\label{exacthamiltonian}.\end{equation} Here we use the notation $K$ to define the corresponding kinetic energy induced in the fibers of $T^*\mathrm{SO}(3)$ from the Riemannian metric $\lmet\cdot,\cdot\rmet$ as usual. All throughout this article we will consider the second order model, with Hamiltonian $H_2$.

\subsection{Symmetries and Relative Equilibria} \label{symmetries-asym}

On the configuration space $Q=\mathrm{SO}(3)\times\mathbb{R}^3$ there is a natural action of $\Gasym_0=SO(3)$, given in space coordinates by $M\cdot (B,\vecr)=(MB,M\vecr)$ or, in body coordinates, by $M\cdot(B,\vecR)=(MB,\vecR)$, with $M\in\mathrm{SO}(3)$.

The fundamental fields of this action are given, for any $\xi\in\mathfrak{so}(3)\simeq\mathbb{R}^3$, by $$\xi_Q(B,\vecr)=\left.\frac{d}{dt}\right|_{t=0}(\exp(t\hat\xi)B,\exp(t\hat\xi)\vecr)=(\hat \xi B,\hat \xi \vecr),$$ which is represented in space coordinates by
$$\xi_Q(B,\vecr)=(B^T\xi,\xi\times\mathbf{r}).$$
and in body coordinates by
$$\xi_Q(B,\vecR)=(B^T\xi,0).$$

This action leaves the metric \eqref{metric} and the potentials \eqref{potexacte2}, \eqref{2potential} invariant, so \eqref{hamiltonian2} as well as the exact system \eqref{exacthamiltonian}, are $\Gasym_0$-symmetric simple mechanical systems (see the Appendix). The momentum map of the $\Gasym_0$-action on $T^*\mathrm{SO}(3)$ corresponds to the total angular momentum, the sum of the angular momenta due to orbital  and spinning motions.

From the above computations  the locked inertia tensor is given in body coordinates by
\begin{equation}\langle\xi, \mathbb I(B,\vecR)\eta\rangle=\lmet\xi_Q(B,\vecR), \eta_Q(B,\vecR) \rmet=\xi \cdot B(\mathbf I-\hat \vecR\hat \vecR)B^T\eta \implies \mathbb I(B,\vecR)= B(\mathbf I-\hat \vecR \hat \vecR)B^T\label{lockedI}\end{equation}

\begin{rem}It is interesting to describe what are the one-parameter subgroup orbits of $\Gasym_0$ in spatial coordinates. The orbit of the point $(B_0,\vecr_0)$ generated by the Lie algebra element $\xi$ is given by $B(t)=\exp(t\hat \xi)\cdot B_0$ and $\vecr(t)=\exp(t\hat \xi)\cdot \vecr_0$. But also:
$$\left|\vecr(t)-\frac{(\vecr(t)\cdot \xi)\xi}{|\xi|^2}\right|^2=|\vecr(t)|^2-\frac{\vecr(t)\cdot \xi}{|\xi|^2}=|\vecr_0|^2-\frac{\vecr_0\cdot \exp(-t\hat \xi)\xi}{|\xi|^2}=|\vecr_0|^2-\frac{\vecr_0\cdot\xi}{|\xi|^2}.$$
That is, the vector $\vecr$ describes a right circular cone with vertex at the origin and center at $C=\frac{\vecr_0\cdot \xi}{|\xi|^2}\xi$. Note that if $\xi\cdot \vecr_0\neq 0$ the orbit center $C$ does not coincide with the center of the potential $O$.
\end{rem}

\begin{defn}
A relative equilibrium such that $\vecr\cdot \xi=0$ is called \emph{orthogonal equilibrium}, otherwise it will be called \emph{non-orthogonal}.\label{defn-orthoblreleq}
\end{defn}

For orthogonal equilibria the center of mass the rigid body describes a circle whose center coincides with the center of the potential, (i.e. $C=O$) but in the non-orthogonal equilibria $C\neq O$ the orbit is a circle in plane not containing the center of the potential as explained in the previous remark.

\begin{rem}
There exist several different names for the orthogonal/non-orthogonal dichotomy in the literature. In \cite{o2004steady} it is distinguished between ``coplanar'' and ``non-coplanar'' motions, and in \cite{wang1990hamiltonian} the terminology ``great-circle motions'' and ``non-great circle motions''  is used.
\end{rem}
In \cite{stepanov1969steady}, the offset angle $\varkappa$ is defined as the angle of  $\vecr$ with the orbital plane, that is, \begin{equation}\label{defoffset}\sin \varkappa=\frac{\vecr_0\cdot \xi}{|\xi||\vecr_0|}.\end{equation} Although one may think that the only physically meaningful value for $\varkappa$ is $\varkappa=0$ this is not always the case. In fact the coupling between spinning and orbital motion can actually produce dynamical orbits with $\varkappa\neq 0$.

\subsection{Discrete Symmetries}
\label{sec:asym-discrete}

The second order model \eqref{hamiltonian2} has an additional set of discrete symmetries. Let $\Gammaasym$ be the group of transformations of $Q$ generated by the symmetry $$s:(B,\vecr)\mapsto (B,-\vecr),$$ and the 3 perpendicular rotations $$\rho_i:(B,\vecr)\mapsto (B\rho_i^T,\vecr),\quad i=1,2,3$$
where $\rho_i$ is a rotation of angle $\pi$ around the $i$-th principal axis of the satellite.

Using the following $3\times 3$ matrix representation of the generators of $\Gammaasym$  in body coordinates as
$$s=\left[\begin{array}{ccc}
        -1 & 0 & 0\\
	0 & -1 & 0 \\
	0 & 0 & -1
       \end{array}
 \right],\quad \rho_1= \left[\begin{array}{ccc}
        1 & 0 & 0\\
	0 & -1 & 0 \\
	0 & 0 & -1
       \end{array} \right],\quad \rho_2 =\left[\begin{array}{ccc}
        -1 & 0 & 0\\
	0 & 1 & 0 \\
	0 & 0 & -1
       \end{array}\right],\quad \rho_3 =\left[\begin{array}{ccc}
        -1 & 0 & 0\\
	0 & -1 & 0 \\
	0 & 0 & 1
       \end{array} \right],$$
 the action of $\Gammaasym$ on the configuration space $\mathrm{SO}(3)\times\mathbb{R}^3$ is $$A\cdot(B,\vecR)=(BA^T\det(A),A\vecR)$$

The total symmetry group of the second order model will be the direct product $\Gasym:=SO(3)\times \Gammaasym$ that acts on $\mathrm{SO}(3)\times\mathbb{R}^3$ as
\[(M,A)\cdot (B,\vecR) = (MBA^T\det(A),A\vecR)\]
with $(M,A)\in SO(3)\times \Gammaasym$.
The lift of this action to $T(\mathrm{SO}(3)\times\mathbb{R}^3)$ is given by
$$(M,A)\cdot (B,\vecR,\Omega,\dot \vecR)=(MBA^T\det(A),A\vecR;A\det(A)\Omega,A\dot \vecR).$$
It is easily verified that $(Q,\lmet,\rmet,\Gaxi,V_2)$ is a $\Gasym$-symmetric simple mechanical system.

\begin{rem}
Although $\Gasym_0$ acts freely on $\mathrm{SO}(3)\times\mathbb{R}^3$, the action of $\Gasym$ on $Q$ is only locally free, therefore exhibiting discrete stabilizers for certain points. This fact will be exploited in the subsequent classifications of relative equilibria.
\end{rem}

\begin{rem}
For the exact model  \eqref{exacthamiltonian} on can easily check  that  $\Gammaasym$ fixes the metric \eqref{metric}. However, the potential \eqref{potexacte2} will be invariant only if the mass distribution $dm_2(\vecr')$ is invariant under the transformations $s,\rho_1,\rho_2,\rho_3$. Therefore, in general, $\Gasym$ is not a symmetry of the exact model.
\end{rem}

\section{Asymmetric Case}
\label{sec:asym}

In this section we will assume that the three principal moments of inertia $I_1,I_2,I_3$ are pairwise different. A body satisfying this condition will be called \emph{asymmetric}.  In  Section \ref{axisymmetric} we will consider \emph{axisymmetric} bodies, which are those having two equal moments of inertia. In both cases we will study all the possible relative equilibria, as well as their stability and bifurcation properties.

\subsection{Existence of relative equilibria}
\label{existencereleq}
Relative equilibria pairs $(q,\xi)\in Q\times \gasym$ are characterized as those  for which $q$ is a critical point of the augmented potential  $V_\xi\in C^\infty(Q)$ (Theorem \ref{REthm}). By $\Gasym$-invariance we can assume, without loss of generality, that any critical point $q_e$ is of the form $q_e=(\text{Id},\vecR)$. If we call  $R=|\vecR|$, the vanishing of the first variation of $V_\xi$ evaluated at $B=\text{Id}$ is equivalent to the equations
\begin{subequations}
\label{eqordre2releq}
\begin{align}
 \widehat{\xi} (\mathbf I -\vecR\vecR ^T +R^2\text{Id}){\xi}&=0 \label{condicio1}\\
 \nabla_\vecR  V(\vecR ) +\xi(\vecR \cdot\xi)-\vecR |\xi|^2&=0 \label{condicio2}.
\end{align}
\end{subequations}
 We also have that in the second order approximation considered  \begin{equation}\nabla_\vecR V_2(\vecR )=\frac{\vecR }{R^3}+\frac{3\vecR }{2R^5}+\frac{3\mathbf I \vecR }{R^5}-\frac{15\vecR (\vecR \cdot \mathbf I \vecR )}{2 R^7}\label{gradient2n} .\end{equation} Therefore the relative equilibrium conditions for the second order model can be rewritten as
\begin{subequations}
\begin{align}
 (\mathbf I -\vecR \vecR ^T){\xi}=\alpha \xi\implies \vecR (\vecR \cdot\xi)=\mathbf I\xi-\alpha\xi\label{order2-1}\\
 \frac{\vecR }{R^3}+\frac{3\vecR }{2R^5}+\frac{3\mathbf I \vecR }{R^5}-\frac{15\vecR (\vecR \cdot \mathbf I \vecR )}{2 R^7} =-\xi(\vecR \cdot\xi)+\vecR |\xi|^2 \label{order2a}
\end{align}
\label{releq-2n}
\end{subequations}
for some real $\alpha$.

\subsection{Orthogonal Equilibria: Characterization}

If we assume $\xi \cdot \vecR =0$  equations \eqref{releq-2n} are greatly simplified. This condition defines orthogonal relative equilibria (see Definition \ref{defn-orthoblreleq}). The following result is reproduced  from  \cite{wang1990hamiltonian}, where it is proved that the orthogonality assumption is always satisfied if in the second order model we consider large orbital radii.

\begin{prop}
\label{wang-bound} (\cite{wang1990hamiltonian},\cite{beck-phd}) In the second order model \eqref{hamiltonian2}, for a fixed orbital radius $R>\frac{3}{2}$\begin{itemize}
 \item up to group translations there are exactly 6 different relative equilibria for any given $R$.
\item For each of them $\vecR$ and $B^T\xi$ are perpendicular and aligned with two different principal axis of the body.
\item The angular velocity is given by the modified Kepler's formula
\begin{equation}\label{modkepler}|\xi|^2=\frac{1}{R^3}+\frac{1}{2}\frac{3-9I_R}{R^5}\end{equation} where $I_R=(\vecR\cdot \mathbf I\vecR)/R^2$.
\end{itemize}
\end{prop}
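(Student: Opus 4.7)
The plan is to work throughout in the principal axis basis of $\mathbf I$, so that equations \eqref{order2-1}--\eqref{order2a} become component-wise polynomial relations, and then to show that any non-orthogonal solution is forced to lie in the regime $R\leq 3/2$. Specifically, write $\vecR=(R_1,R_2,R_3)$ and $\xi=(\xi_1,\xi_2,\xi_3)$. Equation \eqref{order2-1} becomes
\[
(I_i-\alpha)\xi_i=(\vecR\cdot\xi)R_i,\qquad i=1,2,3,
\]
while \eqref{order2a}, after using this last identity to eliminate $\xi_i(\vecR\cdot\xi)$, takes the form
\[
R_i\Bigl[\tfrac{1}{R^3}+\tfrac{3}{2R^5}+\tfrac{3I_i}{R^5}-\tfrac{15I_R}{2R^3}-|\xi|^2+\tfrac{(\vecR\cdot\xi)^2}{I_i-\alpha}\Bigr]=0
\]
for each $i$ such that $I_i\neq\alpha$. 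First I would take the dot product of \eqref{order2a} with $\vecR$ and simplify using $\vecR\cdot\mathbf I\vecR=I_R R^2$; this directly yields $R^2|\xi|^2-(\vecR\cdot\xi)^2=\tfrac{1}{R}+\tfrac{3-9I_R}{2R^3}$, which gives the modified Kepler formula \eqref{modkepler} as soon as orthogonality is established.

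The core of the argument is the case analysis according to how many components of $\vecR$ are nonzero. If exactly one $R_i$ is nonzero (so $\vecR$ lies along a principal axis), I would use $(I_i-\alpha)\xi_i=R_i(\vecR\cdot\xi)$ together with the equations for the other components to show that either $\vecR\cdot\xi=0$ (orthogonal), or $\xi$ is parallel to $\vecR$, or we get a constraint of the form $R^2=I_j-I_i<1/2$ using the triangle inequalities of Remark \ref{inertiatensor}. In the parallel subcase, substituting $\xi\parallel\vecR\parallel\e_i$ into \eqref{order2a} forces $\nabla V_2(\vecR)=0$, which yields $R^2=\tfrac{9I_i-3}{2}<\tfrac{3}{4}$; in particular $R<3/2$. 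Thus for $R>3/2$ only the orthogonal solution survives in this case.

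When exactly two components of $\vecR$ are nonzero, subtracting the $i=1$ and $i=2$ versions of the bracket equation above produces
\[
\tfrac{3(I_1-I_2)}{R^5}=(\vecR\cdot\xi)^2\tfrac{I_1-I_2}{(I_1-\alpha)(I_2-\alpha)},
\]
and since the body is asymmetric the factor $I_1-I_2$ cancels. Combined with $(I_3-\alpha)\xi_3=0$ and the consistency condition $\vecR\cdot\xi=R_1\xi_1+R_2\xi_2$, a direct bounding using $0<I_i<1/2$ shows the resulting $R$ is again bounded by $3/2$. The three-nonzero-components case is the hardest: one has to carefully exploit the asymmetric hypothesis to see that the three bracket equations over-determine $\alpha,|\xi|^2,(\vecR\cdot\xi)^2$ and force either $\vecR\cdot\xi=0$ or a small-$R$ constraint; this case analysis is the main obstacle, and is where the precise threshold $3/2$ must emerge as the largest root of the polynomial inequalities determined by the $I_i$'s.

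Once orthogonality is established, \eqref{order2-1} reduces to $\mathbf I\xi=\alpha\xi$, so $\xi$ is a principal axis of $\mathbf I$, and the $\vecR$-components of \eqref{order2a} (with $\vecR\cdot\xi=0$) read $(3I_i-\tfrac{15I_R}{2}+\cdots)R_i=0$, whose only solutions with $\vecR\perp\xi$ place $\vecR$ along a principal axis different from that of $\xi$. Counting gives $3\times 2=6$ configurations modulo the $\Gasym_0$-action and the discrete symmetries. Finally, the modified Kepler formula \eqref{modkepler} follows at once from the earlier identity $R^2|\xi|^2-(\vecR\cdot\xi)^2=\tfrac{1}{R}+\tfrac{3-9I_R}{2R^3}$ by setting $\vecR\cdot\xi=0$.
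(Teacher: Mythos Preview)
The paper does not actually prove this proposition; it is quoted from \cite{wang1990hamiltonian} and \cite{beck-phd}, with only the explanatory sentence about the six choices of principal axes following the statement. So there is no ``paper's proof'' to compare against directly, but the paper's later material (Proposition \ref{prop-conical} and Remark \ref{rem:beckbound}) contains the essential ingredients.

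Your outline has the right architecture --- reduce to component equations, take the dot product with $\vecR$ to isolate the Kepler relation, and do a case analysis on the support of $\vecR$ --- but the two places where the actual content lives are precisely where you hand-wave. In the two-nonzero-components case you obtain $\tfrac{3}{R^5}=\tfrac{(\vecR\cdot\xi)^2}{(I_1-\alpha)(I_2-\alpha)}$ and then assert ``a direct bounding using $0<I_i<1/2$ shows $R\le 3/2$''. This does not follow from that identity alone: the quantities $\alpha$ and $(\vecR\cdot\xi)^2$ are still free, and one needs an additional relation (ultimately Beck's inequality $1+\tfrac{3}{2R^2}-\tfrac{15I_R}{2R^2}<0$ for non-orthogonal equilibria, quoted in Remark \ref{rem:beckbound}) to force $R^2<\tfrac{15I_R-3}{2}<\tfrac{9}{4}$. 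That inequality is the real source of the threshold $3/2$, and you have not derived it.

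Conversely, the three-nonzero-components case, which you flag as ``the main obstacle'', is in fact vacuous: the determinant argument in the proof of Proposition \ref{prop-conical} shows that for an asymmetric body any non-orthogonal solution must have $\xi$ (and then $\vecR$) lying in a principal plane, so no non-orthogonal equilibrium has all three components of $\vecR$ nonzero. Incorporating that observation collapses your case analysis to the planar situation, after which the Beck bound finishes the job. (Minor point: in your bracket equation the term $\tfrac{15I_R}{2R^3}$ should be $\tfrac{15I_R}{2R^5}$.)
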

The 6 possibilities arise from choosing a principal axis for $\vecR$ and then a perpendicular principal axis for $B^T\xi$. The orientation of $\vecR$ or $\xi$ along the principal axis is not relevant because it can be changed using a $\Gasym$-symmetry.

\begin{rem}
The first analysis of relative equilibria for this problem was given by Lagrange in \cite{lagrange}. In that reference, he did not analyze the second order system but a simplified one where he truncated the force and angular torque exerted on the satellite to the dominant term, finding that the translational and rotational motions decoupled in that approximation (in the literature this approximation is known as the \emph{restricted} problem \cite{beck-phd}). All the resulting relative equilibria happen to be orthogonal and the attitude is as in Proposition \ref{wang-bound} . That is, one of the principal axes is aligned with the radial direction and another is perpendicular to the orbital plane.
\end{rem}

\subsection{Parallel Equilibria: Characterization}
\label{par:charact}
If we  assume that $\mathbf R$ is an eigenvector of $\mathbf I$ equation \eqref{order2a} implies that either $\xi \cdot \vecR =0$ (orthogonal equilibria) or $\vecR\parallel\xi$. The first case is covered in the previous subsection In the second case
 \eqref{order2a} fixes $R$ but $\vert\xi\vert$ is arbitrary.
 This is equivalent to $$1+\frac{3}{2R^2}(1-3I_R)=0$$ where, as in  Proposition \ref{wang-bound}, $I_R=(\mathbf R \cdot \mathbf{IR})/R^2$. That is, there is a family of \emph{parallel equilibria} for each principal axis such that $I_i>\frac{1}{3}$.

\begin{rem}
\label{rem:parallel}
Although the case $\mathbf R\parallel\xi$ (parallel equilibrium) is a valid solution of equations \eqref{eqordre2releq} this is a shortfall of the second order model. Indeed, under this
 assumption \eqref{condicio2} gives $\nabla_{\mathbf{R}} V_2(\mathbf R)=0$. This means that the potential is neither attractive nor repulsive which is an impossibility for a  gravitational field.
This behavior is an artifact of the potential approximation \eqref{2potential}, for which the correction terms in $R^{-3}$ become dominant as $R$ approaches 0 making the approximation unphysical for very small $R$.
In fact from the above equation we can infer the upper bound
\begin{equation}\label{parallelbound}R^2<\frac34
\end{equation}
in order for parallel equilibria to exist.
For instance if we consider the International Space Station orbiting around the Earth using the second order model $\nabla_{\mathbf{R}} V_2(\mathbf R)=0$ will only happen when the distance the center of the ISS to the center of the Earth is about 50 m.

\end{rem}

\subsection{Conical Equilibria: Characterization}
For cases different from  $\xi \cdot \vecR =0$ or $\xi\parallel\vecR$ equations \eqref{releq-2n} can still be analytically solved.
\begin{prop}
There exist \emph{conical relative equilibria} for the second order system \eqref{hamiltonian2} for which $\mathrm{\vecR}$ and $\xi$ are neither parallel nor perpendicular.
In those solutions $\mathrm{\vecR}$ and $\xi$ belong to one of the principal planes determined by the inertia tensor $\mathrm{\mathbf{I}}$. If a basis of principal axes is chosen such
that $\mathbf R$ and $\xi$ are in the plane spanned by the first two
eigenvectors of $\mathbf I=\diag(I_1,I_2,I_3)$ then $\mathbf R=(R\cos \psi,R\sin
\psi, 0)$ satisfies \[A_4 S^2+A_2S+A_0=0 \text{ and } \nabla_{\mathbf R} V_2(\mathbf R)\cdot \mathbf R>0\] with $S=\cos^2\psi$ and
\begin{subequations}
\begin{align}
A_4&=225 (I_2-I_1)^2,\\
A_2&=6(I_2-I_1)(19R^2+15I_1-60I_2+15),\label{oblconditions}\\
A_0 &=(2R^2-9I_2+3)(8R^2+6I_1-15I_2+3).
\end{align}
\end{subequations}
 For this relative
equilibrium, if $\nabla_{\mathbf R} V_2(\mathbf R)=(g_1,g_2,0)$ then the associated
angular velocity is $\xi=k^{-1} (-g_2,g_1,0)$ where $k\in
\R$ satisfies
\begin{equation}k^2=\frac{|R|^2||\nabla_{\mathbf R} V_2(\mathbf R)|^2-((g_2,-g_1,0)\cdot \mathbf R)^2}{\nabla_{\mathbf R} V_2(\mathbf R)\cdot \mathbf R}.\label{k2}\end{equation}
\label{prop-conical}
\end{prop}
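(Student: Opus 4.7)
The plan is to start from the relative equilibrium equations \eqref{releq-2n}, noting that the conical hypothesis means $\xi \cdot \vecR \neq 0$ and that $\xi$ and $\vecR$ are not parallel, so in particular they are linearly independent. A first key observation is that \eqref{order2-1} rewrites as $\mathbf{I}\xi = \alpha \xi + (\vecR \cdot \xi)\vecR$, placing $\mathbf{I}\xi$ in $\mathrm{span}(\xi,\vecR)$, and that isolating the $\mathbf{I}\vecR$ term in \eqref{order2a} via the explicit gradient \eqref{gradient2n} forces $\mathbf{I}\vecR \in \mathrm{span}(\xi,\vecR)$ as well. Hence $\mathrm{span}(\xi,\vecR)$ is a two-dimensional $\mathbf{I}$-invariant subspace, and since in the asymmetric case the eigenvalues of $\mathbf{I}$ are pairwise distinct, this plane must be spanned by two principal axes of the inertia tensor.

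Working without loss of generality in the plane of $e_1, e_2$, I write $\vecR = R(\cos\psi, \sin\psi, 0)$ and next determine $\xi$. Dotting \eqref{order2a} with $\xi$ gives $\xi \cdot \nabla_{\vecR} V_2(\vecR) = 0$, forcing $\xi \perp \nabla V_2$ inside this plane and hence $\xi = k^{-1}(-g_2, g_1, 0)$ for some scalar $k$. To fix $k$, dotting \eqref{order2a} with $\vecR$ yields $\nabla V_2 \cdot \vecR = R^2|\xi|^2 - (\vecR\cdot\xi)^2$, and substituting $|\xi|^2 = k^{-2}|\nabla V_2|^2$ together with $\vecR\cdot\xi = -k^{-1}(g_2,-g_1,0)\cdot \vecR$ produces the formula \eqref{k2}. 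The numerator of \eqref{k2} is nonnegative by Cauchy--Schwarz applied to $(g_2,-g_1,0)$ and $\vecR$, so the requirement $k^2 > 0$ translates exactly into the stated condition $\nabla V_2 \cdot \vecR > 0$.

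The last step is to derive the equation for $\psi$ from \eqref{order2-1}. Since $\mathbf{I}\xi - (\vecR\cdot\xi)\vecR$ must be parallel to $\xi$, the two-dimensional scalar cross product with $\xi$ eliminates $\alpha$ and produces the compatibility relation $(I_2 - I_1)g_1 g_2 + R^2(cg_2 - sg_1)(cg_1 + sg_2) = 0$, where $c = \cos\psi$, $s = \sin\psi$. A direct calculation from \eqref{gradient2n} shows that $g_1 = \frac{c P_1}{2R^4}$ and $g_2 = \frac{s P_2}{2R^4}$ with $P_i = 2R^2 + 3 + 6I_i - 15 I_R$ and $I_R = I_1 c^2 + I_2 s^2$. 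Using $P_2 - P_1 = 6(I_2 - I_1)$ and dividing through by $cs(I_2-I_1)$ (nonzero because the equilibrium is conical and asymmetric) collapses the identity to $P_1 P_2 + 6R^2(c^2 P_1 + s^2 P_2) = 0$, which upon substitution of $I_R$ is a quadratic in $S = c^2$. Expanding yields the leading coefficient $A_4 = 225(I_2 - I_1)^2$ and the constant term $A_0 = (2R^2 - 9I_2 + 3)(8R^2 + 6I_1 - 15I_2 + 3)$ directly, and a short simplification gives $A_2 = 6(I_2-I_1)(19R^2 + 15I_1 - 60I_2 + 15)$.

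The conceptual core is the invariance argument that forces $(\vecR, \xi)$ into a principal coordinate plane; once this reduction is made the problem becomes essentially two-dimensional. The main obstacle is the algebraic bookkeeping in the final step, but it is routine after exploiting the structural facts $P_2 - P_1 = 6(I_2-I_1)$ and the linearity of $P_1$ and $P_2$ in $S$, which makes the degree in $S$ and the factorization of the constant term transparent.
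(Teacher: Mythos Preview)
Your proof is correct and follows essentially the same overall architecture as the paper: reduce to a principal plane, then solve a two-dimensional problem by first determining $\xi$ (from $\xi\cdot\nabla V_2=0$) and $k$ (from dotting with $\mathbf R$), and finally extracting the scalar compatibility equation from \eqref{order2-1}.

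The one genuine difference is in the reduction step. The paper argues that \eqref{order2-1} places $\mathbf R$ in $\mathrm{span}(\xi,\mathbf I\xi)$ and then, after substituting into \eqref{order2a}, obtains a linear dependence among $\xi,\mathbf I\xi,\mathbf I^2\xi$; the resulting $3\times 3$ singularity condition forces (in the asymmetric case) one component of $\xi$ to vanish, and then $R_3=0$ follows. Your argument is more direct: you observe that \eqref{order2-1} gives $\mathbf I\xi\in\mathrm{span}(\xi,\mathbf R)$ while isolating $\mathbf I\mathbf R$ in \eqref{order2a} gives $\mathbf I\mathbf R\in\mathrm{span}(\xi,\mathbf R)$, so this two-plane is $\mathbf I$-invariant and hence, by the spectral theorem with simple eigenvalues, a principal plane. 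This is cleaner and avoids the auxiliary $3\times 3$ determinant. The subsequent two-dimensional computation is organized differently (you work with $P_1,P_2$ and the identity $P_2-P_1=6(I_2-I_1)$, the paper eliminates an auxiliary eigenvalue $\beta$), but these are equivalent bookkeeping choices leading to the same quadratic in $S$.
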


\begin{proof}
From  \eqref{order2-1} we have that if the equilibrium is not orthogonal, then $\mathrm{\vecR}$ must belong to the plane spanned by $\xi$ and $\mathbf I \xi$. Substituting this in \eqref{order2a} gives a linear relationship between $\xi,\,\mathbf I \xi$ and $\mathbf I^2 \xi$. This implies that if we take a basis aligned with the principal axes of $\mathbf I$ the following matrix must be singular
\begin{equation*} \begin{bmatrix}
    \xi_1 & I_1\xi_1 & I_1^2\xi_1 \\
\xi_2 & I_2\xi_2 & I_2^2\xi_2 \\
\xi_3 & I_3\xi_3 & I_3^2\xi_3 \\
   \end{bmatrix}.
\end{equation*}
We see that this happens only if two $I_i$ are equal (axisymmetric case) or if some component of $\xi$ vanishes.
Since in this section we are studying the asymmetric case, we will only consider the second option. Without loss of generality we can reorder the basis vectors so that $\xi_3=0$. Using \eqref{order2-1} this implies $R_3=0$. Let $\mathbf R=(R\cos \psi, R\sin \psi,0)$. Using this assumption and \eqref{order2a} we have
\begin{eqnarray*}\xi\cdot \nabla_{\mathbf R} V_2(\mathbf R) & = & 0\\
\e_3\cdot \nabla_{\mathbf R} V_2(\mathbf R) & = & 0. \end{eqnarray*}
which implies that there is some $k\in \R$ such that $\nabla_{\mathbf R} V_2(\mathbf R)=k(-\xi_2,\xi_1,0).$ Let $\nabla_{\mathbf R} V_2({\mathbf R})=(g_1,g_2,0)$. Note that $g_1,g_2$ are functions of $r$ and $\psi$ only. With these definitions \eqref{order2-1} is equivalent to
$$\left(\begin{bmatrix}
              I_1 & 0 \\ 0 & I_2
             \end{bmatrix}-\mathbf R \mathbf{R}^T\right)\begin{bmatrix}
                                  g_2 \\ -g_1
                                 \end{bmatrix}=\beta\begin{bmatrix}
                                  g_2 \\ -g_1
                                 \end{bmatrix}$$
for some $\beta\in \R$. After eliminating $\beta$  we obtain the equation
$$(A_4S^2 +A_2S +A_0)(I_1-I_2)\cos\psi \sin\psi=0$$
where $S=\cos^2\psi$ and $A_4,A_2,A_0$ are defined in \eqref{oblconditions}. If $\cos\psi\sin\psi=0$ then $\vecR$ is an eigenvector of $\mathbf{I}$ and as we saw above the solution is either a orthogonal or a parallel equilibrium. Therefore, we can assume that $A_4S^2 +A_2S +A_0=0$ and $\cos^2\psi$ is a function of $r$.

Now multiplying \eqref{order2a} with $\mathbf{R}$, after some simplifications we arrive to  \eqref{k2}. Note that this has a real solution if and only if $\nabla_\mathbf{R} V_2(\mathbf{R})\cdot \mathbf{R}>0$ or equivalently
\begin{equation}1+\frac{3}{2R^2}-\frac{9\mathbf{R}\cdot \mathbf {I R}}{2R^4}>0. \label{conicalbound}\end{equation}
\end{proof}
\begin{rem}
Notice that some authors (see for instance \cite{wang1990hamiltonian}) use the name \emph{oblique} equilibria correspon\-ding to our definition of conical equilibria.
\end{rem}
We have found algebraic conditions for the existence of conical equilibria. In order to obtain one such relative equilibrium one can proceed as follows.
Given a value of $R=|\mathbf R|$
\begin{itemize}
 \item Check if $A_4S^2+A_2S+A_0=0$ has a solution $0<S<1$. Let $S=\cos^2 \psi$
 \item From this $S$ construct $(g_1,g_2)$ using $(R,\psi)$ and check if $g_1 R\cos \psi +g_2R\sin \psi>0$. In that case obtain $k$ from \eqref{k2}.
 \item The relative equilibrium is characterized by $\mathbf R=(R\cos \psi, R\sin \psi,0)$ and $\xi=k^{-1} (-g_2,g_1,0)$.
\end{itemize}
\begin{rem}
\label{rem:beckbound}
 Actually in \cite{beck-phd} it is proved that  non-orthogonal equilibria could exist  only if
\begin{equation}1+\frac{3}{2R^2}-\frac{15\mathbf{R}\cdot \mathbf {I R}}{2R^4}<0. \label{beckbound}\end{equation}
As a consequence, the range of orbits in which conical equilibria exist is bounded by \eqref{beckbound} and \eqref{conicalbound}.
\end{rem}

\subsection{Orthogonal Equilibria. Stability}
\label{asym-orth-stab}
If $((\mathrm{Id},\vecR),\xi)\in Q\times \gasym$ is an orthogonal relative equilibrium, according to Proposition \ref{wang-bound} and using the $\Gasym$-symmetry, there is a basis such that the inertia tensor is $\mathbf{I}= \diag(I_1,I_2,I_3)$, the angular velocity is $\xi=(\xi_1,0,0)\in\mathfrak{so}(3)$, $\vecR=(0,R,0)$ where $\xi_1>0$, $R>0$ and the relationship between $R$ and ${\xi_1}$ is given by \eqref{modkepler}. We will now implement the Reduced Energy Momentum method (see the Appendix) in order to study its stability.

The metric \eqref{metric} and momentum at the equilibrium point are
\begin{equation*}\lmet\cdot ,\cdot \rmet_{(\mathrm{Id},\vecR)}=\begin{bmatrix}
               I_1+R^2 & 0 & 0 & 0 & 0 & R \\
0& I_2 & 0 & 0 & 0 & 0\\
0 & 0 & I_3+R^2 & -R & 0 & 0 \\
0& 0& -R & 1 & 0 & 0 \\
0 & 0 & 0 & 0 & 1 & 0\\
R & 0 & 0 & 0 & 0 & 1\\
              \end{bmatrix}, \quad \mu=\mathbb I(\text{Id},\mathbf R) \xi=\left[ \begin {array}{c}  \left( I_1+R^{2}
 \right) {\xi_1} \\0\\0
\end {array} \right].
\end{equation*}

\begin{notation}
From now on we will use the usual notation $\e_i$ for a vector with all the entries equal to zero except the $i$-th component being 1, being the total number of entries clear from the context.\end{notation}
With the above expressions we have
\begin{equation*}\gasym_\mu=\Big\langle\e_1\Big\rangle\implies (\gasym_\mu)^\perp=\Big\langle\e_2,\ \e_3\Big\rangle . \end{equation*}
Using this basis we  obtain the Arnold form \eqref{arnold}\begin{equation}\Ar(\eta,\nu)=\eta\cdot\left(-\hat\mu\mathbb I^{-1}(\text{Id},\mathbf R)\hat \mu+\hat\mu\hat\xi\right)\nu \implies
\Ar={\xi_1}^2\left[\begin{array}{cc}
   \frac{(I_1-I_3)(I_1+R^2)}{I_3+R^2} & 0 \\
   0 &  \frac{(I_1-I_2+R^2)(I_1+R^2)}{I_2}
  \end{array}\right].\label{ortharnold}
\end{equation}
Taking variations of the locked inertia tensor \eqref{lockedI} and evaluating at the relative equilibrium we have
$$\delta(\mathbb I\xi)(\text{Id},\mathbf R)=\begin{bmatrix}
0 & 0 & 0 & 0 & 2R {\xi_1} & 0 \\
0& 0 & {\xi_1}(I_1-I_2+R^2) & -R \xi_1 & 0 & 0\\
0 & {\xi_1}(I_3-I_1) & 0 & 0 & 0 &0
                       \end{bmatrix} \begin{bmatrix}\delta \boldsymbol\theta \\ \delta \mathbf{R} \end{bmatrix}$$
with $\delta\boldsymbol\theta, \delta \mathbf{R}\in \mathbb{R}^3$. Therefore one possible choice for the basis of $\Vint$ (see \eqref{Vint}) is

\begin{equation*}
\Vint = \Big\langle\e_5,\  -R_{{2}}\e_1+(R^2+I_1)\e_6,\  R\e_3+(R^2+I_1-I_2)\e_4  \Big\rangle \subset \mathcal{V}\subset T_{(\mathrm{Id},\vecR)}Q \cong \R^6.
\end{equation*}
In this basis the Smale form is diagonal and, after some simplifications, we arrive to
\begin{equation}
 \Sm=\xi_1^2\left[\begin{array}{ccc}
 \frac{3R^2-I_1}{R^2+I_1}-4+\frac{2}{R^3{\xi_1}^2} & 0 & 0 \\
  0 & \frac{3(I_1+R^2)^2(I_3-I_2)}{R^5{\xi_1}^2} & 0 \\
  0 & 0 & \frac{(I_1-I_2+R^2)(I_1-I_2)(6I_1+8R^2+3-15I_2)}{R^5{\xi_1}^2}\\
 \end{array}\right].\label{orthsmale}
\end{equation}

\begin{prop}
\label{lagrangestab}
There is a critical value $R_\mathrm{crit}>0$ such that all the orthogonal relative equilibria satisfying the \emph{Lagrange stability conditions}
\begin{equation}I_1>I_3>I_2\label{stabconditions}\end{equation} are $\Gasym_\mu$-stable if $R=|\mathbf R|>R_\mathrm{crit}$ and linearly unstable if $R<R_\mathrm{crit}$. In particular this critical value satisfies $R_\mathrm{crit}<2$.
\end{prop}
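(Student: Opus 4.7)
The strategy is to apply the Reduced Energy Momentum method directly to the Arnold form \eqref{ortharnold} and the Smale form \eqref{orthsmale} that have already been computed. Both are diagonal, so $\Gasym_\mu$-stability is equivalent to the five diagonal entries being strictly positive, while a combined odd number of negative entries will give linear instability via the standard Hamiltonian spectral argument (the reduced linearization is $JA$ with $A$ symmetric, and an odd Morse index for $A$ forces a real eigenvalue pair $\pm\lambda$ with $\lambda>0$). Thus everything is reduced to a sign analysis of five scalars.

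First, I would dispatch the Arnold form. Under the Lagrange conditions $I_1>I_3>I_2$, both entries of $\Ar$ in \eqref{ortharnold} are clearly positive. For the Smale form, the $(2,2)$ entry is proportional to $I_3-I_2>0$ and so is positive. In the $(3,3)$ entry, the factors $(I_1-I_2+R^2)$ and $(I_1-I_2)$ are positive, and for the remaining factor one uses that $I_1>I_3>I_2$ together with $\tr\mathbf I=1$ forces $3I_2<I_1+I_2+I_3=1$, i.e.\ $I_2<1/3$; then
\[6I_1-15I_2 \;>\; 6I_2-15I_2 \;=\; -9I_2 \;>\; -3,\]
so $6I_1+8R^2+3-15I_2>8R^2>0$. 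Hence only $\Sm_{11}$ can change sign.

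The nontrivial work is the analysis of $\Sm_{11}$. I would substitute the modified Kepler formula \eqref{modkepler}, which for the present configuration ($\vecR=(0,R,0)$) gives $I_R=I_2$ and hence $\xi_1^2=(2R^2+3-9I_2)/(2R^5)$. Clearing denominators one obtains
\[\Sm_{11} \;=\; \frac{P(R^2)}{2R^5(R^2+I_1)}, \qquad P(y) \;:=\; 2y^2 \;-\; 3(1+2I_1-3I_2)\,y \;+\; 15\,I_1(3I_2-1).\]
The denominator is positive; since $I_2<1/3$ one has $P(0)=15I_1(3I_2-1)<0$, and the leading coefficient is $2>0$, so $P$ has exactly one positive root $R_\mathrm{crit}^2$, with $\Sm_{11}>0$ for $R>R_\mathrm{crit}$ and $\Sm_{11}<0$ for $R<R_\mathrm{crit}$. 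To establish $R_\mathrm{crit}<2$ it suffices to verify $P(4)>0$; a direct evaluation gives
\[P(4) \;=\; 20 + 36\,I_2 - 39\,I_1 + 45\,I_1 I_2,\]
and using the triangle inequality $I_1<1/2$ from Remark \ref{inertiatensor} yields $-39 I_1>-39/2$, hence $P(4)>20-19.5+36 I_2+45 I_1 I_2 > 0$.

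Putting everything together: for $R>R_\mathrm{crit}$ both $\Ar$ and $\Sm$ are positive definite and Theorem \ref{REthm} produces $\Gasym_\mu$-stability; for $R<R_\mathrm{crit}$ the Smale form has exactly one negative eigenvalue while $\Ar$ is still positive definite, so the Hessian of the reduced augmented Hamiltonian has Morse index one on the relevant slice and the odd-index Hamiltonian instability theorem yields a real positive eigenvalue of the linearization. The main obstacle is the algebra that reduces $\Sm_{11}$ to the quadratic $P(y)$; once that is done, the monotonicity of the sign change, the bound $R_\mathrm{crit}<2$, and the linear-instability conclusion are routine.
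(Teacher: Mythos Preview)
Your proof is correct and follows essentially the same route as the paper: reduce to the sign analysis of the five diagonal entries of $\Ar$ and $\Sm$, show that under the Lagrange conditions only $\Sm_{11}$ can change sign, substitute \eqref{modkepler} to reduce $\Sm_{11}$ to the quadratic $P(y)=2y^2+(9I_2-6I_1-3)y+15I_1(3I_2-1)$, use $P(0)<0$ (equivalently Descartes' rule) to get a unique positive root, and check $P(4)>0$ via $I_1<\tfrac12$. One small slip: in the final paragraph you cite Theorem~\ref{REthm} for $\Gasym_\mu$-stability, but that theorem only characterizes relative equilibria; the stability conclusion comes from Proposition~\ref{reducedEMsublk}, and the instability from Proposition~\ref{negativeeigenvalue}.
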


\begin{proof}
According to the Reduced Energy Momentum method (Proposition \ref{reducedEMsublk}) we only need to test the positiveness of the diagonal values of  \eqref{ortharnold} and \eqref{orthsmale}.
Up to positive factors $\Ar$ has eigenvalues $A_1=I_1-I_3, \quad A_2=1+(I_1-I_2)R^{-2}$ which, under the Lagrange conditions \eqref{stabconditions}, are both positive.
In a similar way, up to positive factors $\Sm$ has eigenvalues
\begin{align*}
S_1 &= \frac{3R^2-I_1}{R^2+I_1}-4+\frac{2}{R^3\xi_1^2}\\
S_2 &= I_3-I_2\\
S_3 &= (I_1-I_2)\left(1+\frac{I_1-I_2}{R^2}\right)(8R^2+6I_1+3-15I_2) .\\
\end{align*}
The Lagrange conditions \eqref{stabconditions} and $I_1+I_2+I_3=1$ (Remark \ref{inertiatensor}) imply that $I_2>\frac{1}{3}$ and $I_1<\frac{1}{3}$ and then $8R^2+3+6I_1-15I_2>8R^2>0$. That is, the Lagrange conditions imply that $S_2>0$ and $S_3>0$. Therefore
the only mechanism for loosing definiteness of $\Sm$ is when $S_1$ changes sign, but it follows from Proposition \ref{negativeeigenvalue} that changing a single eigenvalue implies instability. After some calculations, $S_1$ can be expressed as
\[S_1 = \frac{2R^4+(9I_2-6I_1-3)R^2-15I_1+45I_1I_2}{(I_1+R^2)2R^5\xi_1^2}.\]
Using the same ideas as with $S_3$ we find that $9I_2-6I_1-3<0$ and $-15I_1+45I_1I_2<0$. Now applying Descartes' rule of signs the equation $S_1=0$ has only one solution $R_\text{crit}$, that is, if $R>R_\text{crit}$ then $S_1>0$ and if $R<R_\text{crit}$ then $S_1<0$. Using again simple bounds for $I_1,I_2$ we find $S_1>0$ when $R=2$ so $R_\text{crit}<2$.
\end{proof}

\begin{rem}
Using the Reduced Energy Momentum method we have obtained precise bounds for the validity of the  Lagrange conditions.
In most physical applications the radius in the  chosen normalization (see \eqref{units}) is very large, and then in a first approximation (similar to the computations of \cite{wang1990hamiltonian}) we can neglect higher order terms of $R^{-1}$, therefore reobtaining the classical Lagrange stability conditions \cite{lagrange} for the \emph{restricted} problem.

We can relate the Lagrange conditions with the relative equilibria of the free rigid body. The stable relative equilibria for a free rigid body are the rotations around its axes of minimum or maximum inertia. However, in our problem, due to the existence of a central force field, only  rotations around the axis of maximum inertia are stable.
\end{rem}

\subsection{Parallel Equilibria. Stability}
\label{asym-par-stab}

The stability of the family of parallel equilibria can also be studied using the Reduced Energy Momentum method. Let $R=R^*$ be a solution for $1+\frac{3}{2R^2}(1-3I_2)=0$. Using the $\Gasym$-symmetry  there is a basis that diagonalizes $\mathbf{I}$ such that the equilibrium point can be expressed as
 $$\mathbf R=\begin{bmatrix} 0 \\ R^* \\ 0 \end{bmatrix},\quad \xi=\begin{bmatrix}0 \\ \xi_2 \\ 0\end{bmatrix},\quad \mu=\begin{bmatrix}0\\ I_2\xi_2 \\ 0\end{bmatrix}$$ where $\xi_2$ is a free parameter. A possible choice of bases adapted to the method is
\[\mathfrak{g}_\mu^\perp=\Big\langle\e_1,\ \e_3 \Big\rangle \subset \gasym\cong \R^3,\]
\[\Vint = \Big\langle -R^*\e_1+((R^*)^2+I_3-I_2)\e_6,\  R^*\e_3+((R^*)^2+I_1-I_2)\e_4,\ \e_5 \Big\rangle \subset \mathcal{V}\subset T_{(\mathrm{Id},\vecR)}Q \cong \R^6.\]
With respect to this basis $\Ar$ is diagonal with eigenvalues
\begin{equation}
A_1=-\xi_2^2\frac{I_2(7I_2+2I_3-3)}{9I_2+2I_3-3},\quad
A_2=-\xi_2^2\frac{I_2(7I_2+2I_1-3)}{2I_1+9I_2-3}
\label{pararnold}
\end{equation}
and $\Sm$ is also diagonal with eigenvalues
\begin{align}
S_1&=(I_2-I_3)\frac{(\xi_2^2(R^*)^5-3(R^*)^2+3I_2-3I_3)((R^*)^2-I_2+I_3)}{(R^*)^5}\nonumber\\
S_2&=(I_2-I_1)\frac{(\xi_2^2(R^*)^5-3(R^*)^2+3I_2-3I_1)((R^*)^2-I_2+I_1)}{(R^*)^5}\label{parsmale}\\
S_3&=\frac{2}{(R^*)^3}.\nonumber
\end{align}
Given these eigenvalues there are two values of $\xi_2$ that make the Hessian degenerate (4 if counted with signs). These correspond to the two values where bifurcations to the conical family occur as we will see in Section \ref{asym-bifurcations}.

\begin{rem}
The Arnold form in this case cannot
be positive definite, so \emph{parallel equilibria are not formally stable}. Indeed, suppose the Arnold form were positive definite then we would have
$$7I_2+2I_3-3<0\quad \text{ and }\quad 7I_2+2I_1-3<0. $$ Combining these two expressions we get $$14I_2+2I_3+2I_1<6\implies 12I_2<4 \implies I_2<\frac{1}{3}$$
but, as obtained in Subsection \ref{par:charact} $I_2$ would have to be greater than $\frac{1}{3}$ for the existence of parallel equilibria in the $\e_2$ direction, which is a  contradiction.

Certain values of the parameters $I_1,I_2,I_3$ can make the Reduced Energy Momentum Method inconclusive. Choosing several numerical values for those parameters we have computed the eigenvalues of the linearized Hamiltonian field and in all the cases we have found that at least one of them have a positive real part leading to instability. Unfortunately, we do not have an algebraic proof of this the case always.
\end{rem}

\subsection{Conical Equilibria. Stability}
\label{asym-con-stab}

The application of the Reduced Energy Momentum method
to the family of conical equilibria yields analytically intractable algebraic relations. However choosing several representative values of the parameters $I_1,I_2$, numerical estimations of the eigenvalues of $\ed^2 V_\mu$ and the linearization of the Hamiltonian vector field suggest that this family is unstable. We are not aware of rigorous results in the literature along these lines.

\subsection{Isotropy subgroups}
\label{asym-isotropy}

For each of the relative equilibria $(q,p)\in T^*(SO(3)\times \R^3)$ we can compute in a straightforward manner the isotropy groups of the base point $q$, the momentum value $\mu=\mathbf{J}(q,p)$ and the relative equilibria itself $z:=(q,p)$ with respect to the symmetry group $\Gasym$.  For example, using Proposition \ref{wang-bound} and the $\Gasym$-symmetry, an orthogonal equilibrium can be represented as the configuration point $q=(\mathrm{Id},(0,R,0))$ with angular velocity $\xi=(|\xi|,0,0)$. If $(M,A)\in G_q\subset \Gasym$ then
\[(\mathrm{Id},\vecR)=(M,A)\cdot (\mathrm{Id},\vecR)=(MA^T\det(A),A\vecR)\]
therefore, for this equilibrium, $A\in \Gammaasym$ must be of the form $\diag(\pm 1,1,\pm 1)$ and $M=A\det(A)$, that is, $\Gasym_q=\{(A\det(A),A) \mid  A=\diag(\pm 1,1,\pm 1)\}$ and as a group  is isomorphic to the Abelian group $\Z_2\times \Z_2$. In a similar way  we can obtain $G_z$ and $G_\mu$. This procedure can be applied to all the different relative equilibria, and  the resulting groups are shown in the following table.

\vspace{4mm}
\renewcommand{\arraystretch}{1.5}
\begin{tabular}{| c| l | l |}
\hline
\multirow{3}{2cm}{\centering $\text{Orth}^1_2$}   &  $\vecR=(0,R,0)$         &$\Gasym_q=\{(A\det(A),A) \mid  A=\diag(\pm 1,1,\pm 1)\}\cong \Z_2\times \Z_2$ \\
             &  $\xi=(|\xi|,0,0)\neq\mathbf{0}$       &$\Gasym_\mu=\{(\exp(t\e_1),A) \mid t\in \R\}\cong S^1\times (\Z_2)^3$ \\
             &  $\mu=(\mu_1,0,0)$                        &$\Gasym_z=\{(A\det(A),A) \mid A=\diag(\pm 1,1,1)\} \cong \Z_2$ \\
\hline
\multirow{3}{1cm}{\centering $\text{Par}_2$ $\xi\neq0$}   &  $\vecR=(0,R,0)$         &$\Gasym_q=\{(A\det(A),A) \mid  A=\diag(\pm 1,1,\pm 1)\}\cong \Z_2\times \Z_2$ \\
             &  $\xi=(0,|\xi|,0)$     &$\Gasym_\mu=\{(\exp(t\e_2),A) \mid t\in \R\}\cong S^1\times (\Z_2)^3$ \\
             &  $\mu=(\mu_1,0,0)$                      &$\Gasym_z=\{(\diag(a,1,a),\diag(a,1,a)) \mid a=\pm 1\} \cong \Z_2$ \\
\hline
\multirow{3}{1cm}{\centering $\text{Par}_2$ $\xi=0$}   &  $\vecR=(0,R,0)$         &$\Gasym_q=\{(A\det(A),A) \mid  A=\diag(\pm 1,1,\pm 1)\}\cong \Z_2\times \Z_2$ \\
             &  $\xi=(0,0,0)$        &$\Gasym_\mu=\Gasym\cong SO(3)\times (\Z_2)^3$ \\
             &  $\mu=(0,0,0)$                        &$\Gasym_z=\{(A\det(A),A) \mid  A=\diag(\pm 1,1,\pm 1)\}\cong \Z_2\times \Z_2$ \\
\hline
\multirow{3}{*}{$\text{Obl}_{1,2}$}   &  $\vecR=(R_1,R_2,0)$         &$\Gasym_q=\{(A\det(A),A) \mid  A=\diag(1,1,\pm 1)\}\cong \Z_2$ \\
             &  $\xi=(\xi_1,\xi_2,0)\neq\mathbf{0}$       &$\Gasym_\mu=\{(\exp(t\mu),A) \mid t\in \R\}\cong S^1\times (\Z_2)^3$ \\
             &  $\mu=(\mu_1,\mu_2,0)$                        &$\Gasym_z=\{(\mathrm{Id},\mathrm{Id})\}$ \\
\hline
\end{tabular}
\renewcommand{\arraystretch}{1}
\vspace{5mm}

The table above lists the different isotropy groups associated to one representative  for each family of relative equilibria.
Note that the isotropy groups can be used to classify the different families and gives information about the possible bifurcation schemes, since using standard topological arguments a family of relative equilibria can only bifurcate from another one whenever there is a subconjugation relationship between their isotropy groups $G_z$.
For example, we see that the family  $\text{Par}_2$ with $\xi\neq0$ cannot bifurcate from $\text{Orth}^1_2$ since points in the bifurcating branch must have stabilizers conjugated to a subgroups of the stabilizers of the original branch.\begin{rem}
The fact that the bilinear forms $\Ar$ and $\Sm$ in both the orthogonal \eqref{ortharnold},\eqref{orthsmale} and parallel \eqref{pararnold},\eqref{parsmale} equilibria are diagonal is not been a coincidence. This sub-blocking property is due to the fact that the bases chosen for $\gasym_\mu$ and $\Vrig$  correspond to what is called an isotypic decomposition for which an invariant bilinear form necessarily has to block-diagonalize (see Theorems 2.5 and 3.5 of \cite{golubitsky1988singularities2} for more details).
\end{rem}

\subsection{Bifurcations}
In this subsection we study the change in the stability regimes and bifurcation schemes for the different families of asymmetric relative equilibria.
\label{asym-bifurcations}
\begin{prop}\label{ortbifprop}With the notation of Proposition \ref{lagrangestab}, the  family of orthogonal relative equilibria satisfying Lagrange stability conditions loses stability at the point $R=R_\mathrm{crit}$ but no bifurcation occurs at that point.
\end{prop}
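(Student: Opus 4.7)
The plan is to handle the two assertions of the proposition separately. The loss of stability at $R = R_\mathrm{crit}$ is immediate from Proposition \ref{lagrangestab}, where it was shown that under the Lagrange stability conditions the eigenvalue $S_1$ of the Smale form \eqref{orthsmale} crosses zero simply at $R_\mathrm{crit}$, while $S_2,\,S_3$ remain strictly positive. Consequently, the kernel of $\ed^2 V_\mu$ restricted to $\Vint$ at the critical relative equilibrium is one-dimensional and spanned by the first basis vector $\e_5$ of the basis displayed for $\Vint$ just before \eqref{orthsmale}.

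To establish the absence of bifurcation I will combine an isotypic analysis of this kernel with the isotropy classification of Subsection \ref{asym-isotropy}. First I would verify by direct computation that the generator of $\Gasym_z \cong \Z_2$ of $\mathrm{Orth}^1_2$ acts on $T_{(\mathrm{Id},\vecR)}Q$ in a way that fixes $\e_5$ pointwise, so the kernel lies inside the trivial isotypic component of $\Vint$ (consistent with the block-diagonal structure of $\Sm$ emphasized at the end of Subsection \ref{asym-isotropy}). Simultaneously I would observe that $\e_5$ is precisely the tangent vector at $(\mathrm{Id}, R\e_2)$ to the curve $R \mapsto (\mathrm{Id}, R\e_2)$ parameterizing the orthogonal family itself, so the degeneracy direction is aligned with the family's own direction of variation rather than transverse to it.

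With this in hand the bifurcation analysis splits by candidate isotropy type. The Equivariant Branching Lemma rules out a branch with strictly smaller isotropy than $\Z_2$, the only such candidate being the conical family $\mathrm{Obl}_{1,2}$ (with trivial isotropy), because such a branch would require a kernel direction in a nontrivial isotypic component of $\Vint$, and there is none. The only other nearby family with $\Z_2$ isotropy is $\mathrm{Par}_2$ with $\xi \neq 0$; comparing the generators of the two $\Z_2$ subgroups in the isotropy table shows they are not subconjugate in $\Gasym$ (the generator of the isotropy of $\mathrm{Orth}^1_2$ involves an orientation-reversing element of $\Gammaasym$, while that of $\mathrm{Par}_2$ is a rotation), so this channel is closed as well. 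Finally, any branch preserving the full isotropy $\Z_2$ of $\mathrm{Orth}^1_2$ must be tangent to $\e_5$ at $R_\mathrm{crit}$, which is exactly the tangent to the original family.

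The main obstacle I foresee is establishing that this last candidate is in fact the original family and not a distinct $\Z_2$-symmetric branch. I expect to dispatch this via a Lyapunov--Schmidt reduction onto the trivial isotypic block: the scalar reduced equation vanishes identically along the orthogonal family, which passes smoothly through $R_\mathrm{crit}$ by the explicit parameterization \eqref{modkepler} of Proposition \ref{wang-bound}, leaving no room for a second $\Z_2$-symmetric branch. Hence stability is lost without any bifurcation at $R_\mathrm{crit}$.
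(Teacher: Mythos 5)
Your identification of the degeneracy is correct and matches the paper's starting point: under the Lagrange conditions only $S_1$ can vanish, it does so at $R=R_\mathrm{crit}$, and the corresponding kernel direction of $\Sm$ is the $\e_5$ component of $\Vint$. The gap is in the decisive step of the no-bifurcation claim. The paper proves it by an implicit function theorem argument on the full relative-equilibrium equations \eqref{eqordre2releq}: viewing them as a map $F:\R^6\to\R^6$ in the variables $(\vecR,\xi)$ (with $B=\mathrm{Id}$ fixed by symmetry), computing the determinant $\Delta$ of the $5\times 5$ block in $(\xi_1,\xi_2,\xi_3,R_1,R_3)$ along the orthogonal family, and checking $\Delta(R_\mathrm{crit})\neq 0$, so that locally every solution is parametrized by $R_2$ and lies on the family. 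Your proposal replaces this by isotypic considerations plus a Lyapunov--Schmidt reduction, but after restricting to the one-dimensional kernel (which indeed sits in the trivial isotypic component, so the symmetry argument does exclude branches of strictly smaller isotropy such as $\mathrm{Obl}_{1,2}$), you must still exclude a second branch with the \emph{same} $\Z_2$ isotropy. The sentence ``the scalar reduced equation vanishes identically along the orthogonal family, which passes smoothly through $R_\mathrm{crit}$, leaving no room for a second branch'' does not do this: a one-dimensional reduced equation can perfectly well exhibit a transcritical or pitchfork scenario in which a smooth known branch passes through the degenerate point while a second branch of identical isotropy crosses it. Ruling that out requires a quantitative nondegeneracy check, which is exactly what the paper's computation $\Delta(R_\mathrm{crit})\neq 0$ supplies and your proposal never performs.

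There is also a conceptual slip in how that last step is set up. The kernel direction $\e_5$ is a degeneracy of $\ed^2V_\mu$ at \emph{fixed} momentum $\mu$, whereas the orthogonal family has $\mu=\mu(R)$ varying; as the remark following Proposition \ref{ortbifprop} explains, $S_1=0$ means $|\mu|^2$ has a critical point along the family, i.e.\ a fold: for momentum levels slightly below $|\mu_\text{crit}|^2$ there are locally \emph{no} critical points of $V_\mu$ at all. So the family is not a curve of zeros of a single reduced scalar equation, and ``vanishes identically along the family'' is not meaningful until you specify the parameter in the Lyapunov--Schmidt reduction and then verify the corresponding nondegeneracy (that the fold is quadratic in the kernel variable, with nonzero transversal dependence on the parameter). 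That verification is again a computation equivalent to the paper's $\Delta$. In short, your symmetry bookkeeping (already essentially contained in Subsection \ref{asym-isotropy}) reduces the problem to the fixed-point subspace, but it does not close it; the missing ingredient is the explicit nondegeneracy computation on the defining equations.
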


\begin{proof}
The degeneracy of the Arnold or Smale form at a relative equilibrium is a necessary condition for the existence of a bifurcation (see the Appendix) but this condition is not enough, and a further local study must be done in order to detect the existence of a bifurcating branch. As we showed in the proof of Proposition \ref{lagrangestab} the only eigenvalue of $\Ar$ and $\Sm$ that can vanish if the Lagrange conditions are satisfied is $S_1$, and this happens exactly when $R$ attains the value $R_\text{crit}$. We will now check, using the implicit function theorem, that at this point no bifurcating family exists. Equations \eqref{eqordre2releq} can be seen as a local map $F:\R^6\to \R^6$ that characterizes relative equilibrium pairs $(\vecR,\xi)$ as the set of points in $\R^6$ such that $F(\vecR,\xi)=0$. The family of orthogonal equilibria is a family of solutions of $F(\vecR,\xi)=0$ of the form
$$\xi=(\xi_1,0,0),\quad \mathbf{R}=(0,R_2,0)\quad\text{with}\,\,\xi_1^2=\frac{2R_2^2+3-9I_2}{2R_2^5}$$
parametrized by the value of $R_2$.
 We can compute the differential of $F$ evaluated at that point and consider the $5\times 5$ block corresponding to the variables $\xi_1,\xi_2,\xi_3,R_1,R_3$ leaving $R_2$ as a parameter.  After a lengthy computation the determinant of this $5\times 5$ block is
\begin{equation*}\Delta=K\xi_1^3(I_3-I_1)(I_2-I_3)(I_2-I_1)(8R_2^2+6I_1-15I_2+3) \end{equation*}
where $K\neq 0$ is a constant. If $\Delta\neq 0$ then the implicit function theorem ensures that all the solutions of $F(\vecR,\xi)=0$ near $R_2=RR_\mathrm{crit}$  can be parametrized by $R_2$. That is, near that point the only relative equilibria are orthogonal. Therefore, bifurcating families of relative equilibria from the orthogonal family can only appear when $\Delta$ vanishes. Finally, it is easy to see that  $\Delta$ does not have a zero at $R_2=R_\text{crit}$,
 and so no  bifurcation exists at that point.

\end{proof}

\begin{rem}
The fact that $S_1$ vanishes at $R_\text{crit}$ but no bifurcation occurs at that point can also be interpreted in the following way.
The square norm of the angular momentum as a function of $R$ along the orthogonal family is given by the expression
 $$|\mu|^2=|\mathbb I(\mathrm{Id},\vecR) \xi|^2=(I_1+R_2^2)^2\xi_1^2=(I_1+R_2^2)^2\frac{2R_2^2+3-9I_2}{2R_2^5}.$$
Also, after some simplifications $S_1$ can be written as
\begin{equation*}S_1=\frac{3R_2^2-I_1}{R_2^2+I_1}-4+\frac{2}{R_2^3\xi_1^2}= \frac{R_2}{|\mu|^2}\frac{d|\mu|^2}{dR_2}.\end{equation*}
Therefore, $S_1=0$ implies that the square norm of the momentum map attains a critical value at  $R_2=R_\text{crit}$. Denote by $\mu_\text{crit}$ this value. To understand what is happening at the point $R_\text{crit}$ we will need to make some observations:
The function $C:T^*Q\to \R$ defined by $C(z)=|\J(z)|^2$ induces a Casimir function $\mathcal{C}$ on the reduced Poisson manifold $T^*Q/SO(3)$ for which all the non-empty level sets $\mathcal{C}^{-1}(a)$  are symplectic leaves.
Moreover, the reduced Hamiltonian induced on $\mathcal{C}^{-1}(a)$ by \eqref{hamiltonian2}
has critical points when  $a >|\mu_\text{crit}|^2$
 but   no critical points if $a <|\mu_\text{crit}|^2$. Since critical points of the reduced Hamiltonian restricted to symplectic leaves are relative equilibria, we can say from a geometric point of view  that the orthogonal family, seen as a set of equilibria in the reduced space parametrized by the value of $\mathcal{C}([z])$,  undergoes a fold catastrophe as $\mathcal{C}(z)$ crosses $|\mu_\text{crit}|^2$.
This behavior
 also appears in the numeric studies of \cite{o2004steady} where they find that the type \texttt{Ia} motion (equivalent to our  Lagrangian orthogonal equilibrium) looses it stability as a critical distance is crossed.

\end{rem}

In fact, the analysis done in the proof of Proposition \ref{ortbifprop} proves that the orthogonal family can only bifurcate when $\Delta=0$. If the Lagrange conditions are satisfied, we showed  in Proposition \ref{lagrangestab} that $8R_2^2+6I_1-15I_2+3>0$, but on the other hand, as $I_2<\frac{1}{3}$, using \eqref{modkepler} we have that $\xi_1\neq0$ for any value of $R_2$. That is, $\Delta=0$ only if two moments of inertia are equal, this assumption being precisely the axisymmetric case that will be studied  in Section \ref{axisymmetric}.

We now study how the families of orthogonal  and parallel equilibria bifurcate.  Consider the family of orthogonal relative equilibria given by the representatives (see \eqref{modkepler}) $$\xi=(\xi_1,0,0),\quad \mathbf{R}=(0,R_2,0)\quad\text{with}\quad\xi_1^2=\frac{2R_2^2+3-9I_2}{2R_2^5}.$$

Note that if $3-9I_2>0$ this family contains elements for any value of $R_2>0$ but if $3-9I_2<0$ for $R_2$ small enough the conditions are empty, exactly at the point $\mathbf{R}=(0,R^*,0),\xi=(0,0,0)$ where \begin{equation}\label{R*}(R^*)^2=\frac{9I_2-3}{2}.\end{equation} As we did in the proof of Proposition \ref{ortbifprop} we will need to use an appropriate version of the implicit function theorem
 to understand what happens at that point. The relative equilibrium conditions \eqref{condicio1} and \eqref{condicio2} can be rewritten as
\begin{align*}
 (\mathbf I -\mathbf{R}\mathbf{R}^T +|\vecR|^2)\xi-\alpha\xi=0 \\
 \nabla_\mathbf{R} V(\mathbf{R}) +\xi(\mathbf{R}\cdot\xi)-\mathbf{R}|\xi|^2=0
\end{align*}
and this can be thought as the zero level set of a local map $F:\R^7\to \R^6$. This set of equations has as family of solutions given by $$\xi^*=(0,0,0),\quad \mathbf {R}^*=(0,R^*,0),\quad \alpha\in \R.$$

We will now use the implicit function theorem in order to study possible bifurcations. The matrix of partial derivatives with respect to $\xi_1,\xi_2,\xi_3,R_1,R_2,R_3$ is
$$
\begin{bmatrix}
 0 & 0 & 0 & I_1+(R^*)^2-\alpha & 0 & 0\\
 0 & 0 & 0 & 0 & I_2-\alpha & 0\\
 0 & 0 & 0 & 0 & 0 & I_3+(R^*)^2-\alpha\\
 * & 0 & 0 & 0 & 0 & 0 \\
 0 & * & 0 & 0 & 0 & 0 \\
0 & 0 & * & 0 & 0 & 0 \\
\end{bmatrix}
$$
where $*$ represents non-zero terms independent of $\alpha$. In view of this matrix the map $F$ can only bifurcate at the three points $\alpha=\alpha_1,\alpha_2,\alpha_3$ given by
\begin{equation*}\alpha_1=I_1+(R^*)^2, \quad \alpha_2=I_2,\quad \alpha_3=I_3+(R^*)^2 .\end{equation*}

The first bifurcation point corresponds $\alpha=\alpha_1$ to an orthogonal family spinning around $\e_1$ or $\e_3$ and with $\mathbf{R}$ aligned with $\e_2$.

 The third bifurcation point $\alpha=\alpha_3$ corresponds to an orthogonal family spinning around $\e_3$ and with $\vecR$ aligned with $\e_2$. Finally, the second  value  $\alpha=\alpha_2$ corresponds to the bifurcation
of  $\text{Par}_2$ ($\xi\neq 0$) from $\text{Par}_2$ ($\xi=0$) when $R\parallel\xi\parallel\e_2$.

\paragraph{Bifurcation diagram.} A simple sketch of what happens when $I_2>\frac{1}{3}$ is drawn in Figure \ref{simplebif}. If the orbital radius is large enough we have an orthogonal family of relative equilibria with $\xi$ aligned with $\mathbf {e}_1$ and $\mathbf R$ aligned with $\mathbf {e}_2$. This family is labeled as $\text{Orth}^1_2$. Also, if the orbital radius is large enough there is an additional orthogonal family  with $\xi$ aligned with  $\mathbf {e}_3$ and $\mathbf R$ aligned with $\mathbf {e}_2$. This family is labeled as $\text{Orth}^3_2$. As the radius $R$ gets smaller and reaches its critical value $R^*$ given by \eqref{R*}.
both orthogonal families $\text{Orth}^3_2$ and $\text{Orth}^1_2$ meet. Moreover this is a bifurcation point for the family of parallel equilibria spinning with $\mathbf R$ and $\xi$ aligned with $\e_2$ (labeled $\text{Par}_2$).

By a similar analysis (omitted here) we can obtain that the family $\text{Par}_2$ also intersects with
the family of conical equilibria in both the 12 and 23 planes (labeled $\text{Obl}_{1,2}$ and $\text{Obl}_{2,3}$). All those bifurcation points are represented in Figure \ref{simplebif}. Recall that each line represents a family of relative equilibria, or equivalently a 4 dimensional $SO(3)$-invariant submanifold of relative equilibria.

\begin{figure}[h]
\begin{center}
\includegraphics[width=10cm]{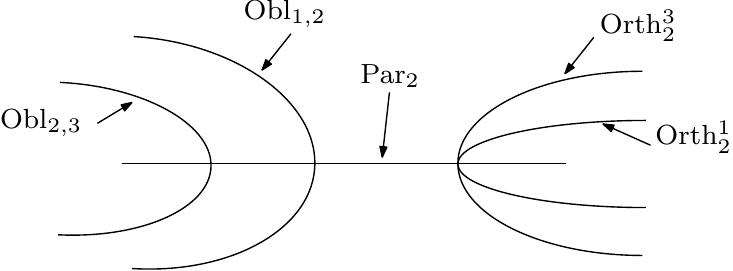}
\caption{Sketch of  the local bifurcations for the asymmetric body.\label{simplebif}}
\end{center}
\end{figure}

\section{Axisymmetric Case}\label{axisymmetric}

In this section we are interested in the case satellite has an axis of symmetry.
One of the seminal works studying this system is the article by Thomson \cite{thomson1962spin}, where the steady motion
of an axially symmetric satellite was investigated assuming that satellite's center of mass described a
prescribed circular Keplerian orbit, what we have called the restricted problem. In the relative equilibrium
found in that reference the axis of symmetry of the satellite was perpendicular to the orbital plane. In that configuration the satellite can spin about its
axis of symmetry with arbitrary velocity while still maintaining the circular orbit. The stability conditions for the  attitude motion are also investigated. Later, \cite{pringle1964bounds} and \cite{likins1966uniqueness}
showed that other steady motions were also present in the restricted problem. We will call these solutions the Pringle–Likins hyperbolic and conical  equilibria.

The unrestricted problem, where the attitude–orbit coupling is incorporated, was, to our knowledge, first examined in \cite{stepanov1969steady}. Among other results, the stability criteria for some of the unrestricted problem's counterparts to the Thomson and the Pringle–Likins hyperbolic equilibria are established. Also in that reference it is suggested that the unrestricted problem's counterparts to the Pringle–Likins conical equilibria could be examples of motions where the orbital plane of the satellite's center of mass does not contain the center of the potential. However the authors only provide an implicit description of those equilibria. Consequently, these and other questions concerning the existence and stability of steady motions for the unrestricted problem have so far remained open.

In \cite{beck-phd} it is conjectured the existence of an upper bound for $R$
in the conical family similar to the inequality stated in  Proposition \ref{wang-bound} for the asymmetric case, but this is not proved. In \cite{o2004steady} numerical continuation techniques are employed to follow the family of cylindrical equilibria  until it bifurcates to other equilibria. The only bifurcations found are families of hyperbolic and conical equilibria for very small radius, like the ones found in the asymmetric case. This led them to affirm that those motions can only exist for small orbital radius as it happens for  the conical equilibria for the asymmetric case.

In this section, we will show that the conical equilibria of Pringle–Likins have analogues in the unrestricted system and that they are not bound to small orbit. This contradicts the suggestion  of \cite{o2004steady} based  on numerical continuation experiments. Moreover, for a large region in the parameter space these motions are shown to be stable.

\subsection{$SO(3)\times S^1$ symmetry}
\label{axisym-action}
If the rigid body possesses an axis of symmetry then the symmetry group $G^\text{asym}$
introduced in Section \ref{symmetries-asym} can be augmented with the group of rotations around that axis, realized by a right action of $S^1$. If we choose a body fixed orthogonal frame such that the first element corresponds to the symmetry axis, then the inertia tensor will have the diagonal form $\mathbf I=\diag(I_1,I_2,I_2)$ and, as in Remark \ref{inertiatensor}, $I_1+2I_2=1$.

We can define an action of the direct product $SO(3)\times S^1=\Gaxi_0$ on the configuration space $Q$ by (in space coordinates) $$(M,R_\theta)\cdot(B,\vecr)=(MBR_\theta^T,M\vecr)$$ where $$R_{\theta}=\begin{bmatrix}1 & 0 & 0 \\ 0 & \cos \theta & -\sin \theta \\ 0 & \sin \theta & \cos \theta \end{bmatrix}=\exp(\theta \widehat{\e_1}).$$
The Lie algebra of $\Gaxi_0$ can be identified with the direct sum  $\R^3 \oplus \R$ in such a way that
the adjoint action is given by  $\ad_{(\xi,\eta)}(a,b)=(\widehat {\xi} a,0)$ for every $(\xi, \eta),(a,b)\in \gaxi$.

In body coordinates we have that the action is expressed as $$(M,R_\theta)(B,\mathbf{R})=(MBR_\theta^T,R_{\theta}\mathbf{R})$$
therefore the fundamental fields are
$$(\xi,\eta)_Q(B,\mathbf{R})=(B^T\xi-\eta \e_1,\eta \widehat{\e_1} \mathbf{R})=(\delta \theta,\delta \mathbf{R})\in T_{(B,\vecR)} Q\cong \R^6.$$
Finally, using this last expression and \eqref{metric} it is easy to check that the locked inertia tensor is given by
$$\mathbb I(B,\vecR)=\begin{bmatrix}
B(\mathbf I -\hat \vecR\hat \vecR)B^T & -B\e_1I_1\\
-(B\e_1I_1)^T & I_1 \\
  \end{bmatrix}$$

The relative equilibria of the system under this symmetry group are similar to the ones described in Section \ref{symmetries-asym}. The center of the satellite will move along a cone with vertex at the center of the potential while the satellite spins along its symmetry axis. If the relative equilibrium pair $((\mathrm{Id},\vecR),(\xi,\eta))\in Q\times \gaxi$ is \emph{orthogonal} in the sense that $\vecR \cdot \xi=0$ then the center of mass of the satellite will orbit along a circle with center the origin of the potential gravitational field.

\subsection{Discrete symmetries}
In Section \ref{sec:asym-discrete} we enlarged the symmetry group using the additional symmetries of the inertia ellipsoid. In the axisymmetric setting this will also happen. However this time the resulting group will not be a direct product.
Consider the matrix group
\[\Gammaaxi:=\left\{\begin{bmatrix}s & 0 \\ 0 & O\end{bmatrix}\mid  s\in \{1,-1\},\quad O\in O(2)\right\}\subset O(3)\]
and the map $\chi:\Gammaaxi \to \{1,-1\}$ defined by the determinant of the lower-right $2\times 2$ sub-matrix. Topologically $\Gammaaxi$ is the disjoint union of four copies of $S^1$ but as a group it is $O(2)\times \Z_2$.

We can define the following left action of $\Gaxi:=SO(3)\times \Gammaaxi$ on $Q$, in body coordinates, as
\[(M,N)\cdot (B,\vecR)=(MBN^T \det(N),N\vecR)\in SO(3)\times \R^3=Q.\]
The tangent lifted action is
\[(M,N)\cdot (\delta\theta,\delta\vecR)=(N\det(N)\delta\theta,N\delta\vecR)\in T_{(M,N)\cdot (B,\vecR)} Q\cong \R^6.\]
Using that $NR_\theta N^T=R_{\chi(N)\theta}$ the coadjoint action on the dual of its Lie algebra is
\[\Ad^*_{(M,N)^{-1}}(\mu,\nu)=(M\mu,\chi(N)\nu)\in {\gaxi}^*\cong \R^3\oplus \R.\]
From the above expressions one can check that the metric \eqref{metric} and the potential \eqref{2potential} are invariant under the action of the group $\Gaxi$. This implies that $(Q,\lmet,\rmet,\Gaxi,V_2)$ is a $\Gaxi$-symmetric simple mechanical system. Note that the connected component that contains the identity of $\Gaxi$ is $\Gaxi_0$ as defined before.
The action of $\Gaxi$ on $Q$ is locally free at $(B,\vecR)$ if $\vecR$ is not aligned with $\e_1$ but note that otherwise there exists continuous isotropy.

\subsection{Orthogonal Equilibria. Characterization}
As we in saw in Section \ref{existencereleq} the relative equilibria for the $\Gaxi$-action are the pairs $((B,\vecR),(\xi,\eta))\in Q\times \gaxi$ at which the first variation of the augmented potential $V_{(\xi,\eta)}$ vanishes. Again, without loss of generality we can assume that $B=\text{Id}$, and this gives the conditions
\begin{subequations}
\begin{align}
 \widehat{\xi} (\mathbf I -\widehat {\mathbf{R}} \widehat {\mathbf{R}}){\xi}-\widehat{\xi}I_1\e_1\eta=0 \label{axieta}\\
 \nabla_{\mathbf{R}} V_2(\mathbf{R}) +\xi(\mathbf{R}^T\xi)-\mathbf{R}\xi^T\xi=0 \label{axiV}
\end{align}
\label{axi-releq}
\end{subequations}
where $\nabla_{\mathbf{R}} V_2(\mathbf{R})$ has been computed in \eqref{gradient2n}.

As for the asymmetric case, if we assume $\xi \cdot \vecR =0$, then conditions \eqref{axi-releq} are greatly simplified, and we will refer to this case as orthogonal equilibria. The  case $\xi\cdot \vecR \neq 0$ will be addressed in Section \ref{sec:conical}.
Under the orthogonality assumption we can distinguish different families of relative equilibria.
\begin{prop}
Assume that a relative equilibrium $((\mathrm{Id},\vecR), (\xi,\eta))\in Q\times \gaxi$ for the axisymmetric second order model satisfies $\xi\cdot \vecR =0$. Then up to  translations by elements of $\Gaxi$ it belongs to one of the following cases

\begin{itemize}
 \item Cylindrical: There is $R>0$ and $\alpha\in \R$ such that
 \begin{equation}\xi=\begin{bmatrix}\xi_1 \\0 \\ 0 \end{bmatrix}\,quad \mathbf{R}=\begin{bmatrix}0 \\R \\ 0 \end{bmatrix},\quad |\xi|^2= \frac{2R^2+3-9I_2}{2R^5},\quad \eta=-\alpha\xi_1 \label{cylindricalpoint}. \end{equation}

 \item Hyperbolic: There is $R>0$ and $\theta\in S^1$, $\theta\neq \frac\pi2$ such that \begin{equation}\xi=|\xi|\begin{bmatrix}\sin\theta \\0 \\ \cos\theta \end{bmatrix},\quad \mathbf{R}=\begin{bmatrix}0 \\R \\ 0 \end{bmatrix},\quad |\xi|^2= \frac{2R^2+3-9I_2}{2R^5},\quad \eta=-\frac{I_2-I_1}{I_1}|\xi|\sin\theta.\label{hyperbolicpoint}\end{equation}

 \item Isolated: There is $R>0$ such that \begin{equation} \xi=\begin{bmatrix} 0 \\ \xi_2 \\ 0\end{bmatrix},\quad \mathbf{R}=\begin{bmatrix} R \\ 0 \\ 0\end{bmatrix}, \quad |\xi|^2= \frac{2R^2+3-9I_1}{2R^5},\quad \eta=0.\label{isolatedpoint}\end{equation}
\end{itemize}
\label{prop:axi-orth}
\end{prop}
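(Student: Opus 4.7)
The plan is to exhaust all orthogonal relative equilibria by analysing the two equations \eqref{axieta}, \eqref{axiV} under the assumption $\vecR\cdot\xi=0$, and then applying the $\Gaxi$-action to bring each solution into one of the three normal forms of the statement.

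First I would exploit equation \eqref{axiV}. Since $\vecR\cdot\xi=0$, it reduces to $\nabla_\vecR V_2(\vecR)=|\xi|^2\vecR$. Substituting the explicit gradient \eqref{gradient2n},
\[
\frac{\vecR}{R^3}+\frac{3\vecR}{2R^5}+\frac{3\mathbf I\vecR}{R^5}-\frac{15(\vecR\cdot\mathbf I\vecR)\vecR}{2R^7}=|\xi|^2\vecR,
\]
the only term that is not manifestly proportional to $\vecR$ is $3\mathbf I\vecR/R^5$. Hence the equation forces $\mathbf I\vecR\parallel\vecR$, so $\vecR$ must be an eigenvector of $\mathbf I=\diag(I_1,I_2,I_2)$. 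Since the eigenspaces are $\langle\e_1\rangle$ and its orthogonal complement $\langle\e_2,\e_3\rangle$, there are two subcases: either $\vecR$ lies in the $(\e_2,\e_3)$-plane, or $\vecR$ is proportional to $\e_1$. Using the $O(2)$-factor of $\Gammaaxi$, which rotates the $(\e_2,\e_3)$-plane, we can reduce the first case to $\vecR=(0,R,0)$, and the second case is already $\vecR=(R,0,0)$ up to the sign freedom. Plugging back into the scalar relation gives the stated values $|\xi|^2=(2R^2+3-9I_2)/(2R^5)$ and $|\xi|^2=(2R^2+3-9I_1)/(2R^5)$ respectively.

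Next I would turn to equation \eqref{axieta}. Using the identity $-\widehat\vecR\widehat\vecR\xi=R^2\xi-(\vecR\cdot\xi)\vecR$ together with $\vecR\cdot\xi=0$ and $\widehat\xi\xi=0$, this equation collapses to $\widehat\xi(\mathbf I\xi-I_1\eta\,\e_1)=0$, i.e.\ $\mathbf I\xi-I_1\eta\,\e_1$ must be parallel to $\xi$. I would analyse this separately in the two subcases above. In the case $\vecR=R\e_2$, orthogonality gives $\xi=(\xi_1,0,\xi_3)$. The proportionality condition $\mathbf I\xi-I_1\eta\,\e_1=\lambda\xi$ reads $(I_1\xi_1-I_1\eta,0,I_2\xi_3)=\lambda(\xi_1,0,\xi_3)$. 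If $\xi_3\neq 0$ we must take $\lambda=I_2$ and consequently $\eta=-(I_2-I_1)\xi_1/I_1$, yielding the hyperbolic family after writing $\xi=|\xi|(\sin\theta,0,\cos\theta)$. If $\xi_3=0$ then $\xi\parallel\e_1$, so $\widehat\xi\,\e_1=0$ and \eqref{axieta} is satisfied for any $\eta\in\R$; setting $\eta=-\alpha\xi_1$ parametrises the one-parameter cylindrical family.

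Finally, in the case $\vecR=R\e_1$, orthogonality gives $\xi=(0,\xi_2,\xi_3)$ and the first row of the condition $\mathbf I\xi-I_1\eta\,\e_1=\lambda\xi$ forces $\eta=0$, while the remaining rows are consistent for any $(\xi_2,\xi_3)$ with $\lambda=I_2$. Since $\vecR=R\e_1$ is fixed by the $SO(2)$-rotations in the $(\e_2,\e_3)$-plane that live in $\Gaxi$, we may use this residual symmetry to rotate $\xi$ into the $\e_2$-axis, obtaining the isolated family. The step I expect to be the cleanest but still requiring care is the bookkeeping of residual symmetries in this last case, because one must check that the normalisation $\mathbf R=R\e_2$ chosen in the hyperbolic/cylindrical branches leaves no remaining freedom that could collapse those two families, while in the isolated branch an additional $S^1$-freedom is still available; this is precisely what guarantees the three families are mutually distinct up to $\Gaxi$-translation and exhaust the orthogonal case.
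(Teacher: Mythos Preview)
Your proof is correct and uses the same two reduced equations \eqref{ortheta}--\eqref{orthV} as the paper, but you reverse the order of analysis: you first exploit \eqref{orthV} to conclude that $\vecR$ is an eigenvector of $\mathbf I$ (giving two subcases $\vecR\perp\e_1$ or $\vecR\parallel\e_1$), and only then analyse the proportionality condition $\mathbf I\xi-I_1\eta\,\e_1\parallel\xi$ in each subcase. The paper instead begins with \eqref{ortheta}, writes it as $\mathbf I\xi-I_1\eta\,\e_1=\lambda\xi$, and splits into four cases according to whether $\lambda$ equals $I_1$, $I_2$, or neither (and whether $\eta=0$), invoking \eqref{orthV} afterwards to pin down $\vecR$. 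Your ordering is arguably tidier, since it produces only two top-level branches rather than four and the residual-symmetry bookkeeping you flag (an extra $S^1$ available only when $\vecR\parallel\e_1$) falls out naturally; the paper's ordering has the minor advantage that the parameter $\lambda$ directly labels the families and makes the borderline case $\alpha=\frac{I_2-I_1}{I_1}$ (Remark~\ref{rem:int-cyl-hyp}) visible from the outset.
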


\begin{proof}
Using $\xi\cdot \mathbf{R}=0$ in \eqref{axi-releq} we have
\begin{subequations}
\begin{align}
 \widehat{\xi} (\mathbf I {\xi}-I_1\e_1\eta)=0 \label{ortheta} \\
 \nabla_\mathbf{R} V_2(\mathbf{R}) -\mathbf{R}|\xi|^2=0. \label{orthV}
\end{align}
\label{axi-orth}
\end{subequations}

The first condition can be written as $\mathbf I {\xi}-I_1\e_1\eta=\lambda \xi $ which  in matrix form is $$\begin{bmatrix}
            I_1-\lambda & 0 & 0 \\ 0 &I_2-\lambda & 0 \\ 0& 0 & I_2-\lambda
           \end{bmatrix}\begin{bmatrix}\xi_1 \\ \xi_2 \\ \xi_3\end{bmatrix}=\begin{bmatrix}I_1\eta\\0 \\ 0 \end{bmatrix}.$$
There are several possibilities for the solutions of the above matrix equation.
\begin{itemize}

\item If $\lambda\neq I_1,I_2$ then the system has only one solution $\xi=((I_1-\lambda)^{-1} I_1\eta,0,0)$, and \eqref{orthV} forces $\mathbf{R}$ to be an eigenvector of the inertia matrix (see \eqref{gradient2n}). Using the orthogonality constraint we have $\mathbf{R}=(0,R\cos\theta,R\sin \theta)$ for some angle $\theta$. Using the $\Gaxi$-action we can assume $\mathbf{R}=(0,R,0)$. We can write this solution depending on the parameters $R>0$ and $\alpha\in \R$ as in \eqref{cylindricalpoint}. The parameter $\alpha$ will be called the \emph{spinning quotient}, and it is the quotient of the spinning velocity $\eta$ with the orbital angular velocity $\xi$.
 Note that the condition $\lambda \neq I_1,I_2$ is equivalent to $\alpha\neq \frac{I_2-I_1}{I_1},\ 0$.

\item If $\lambda=I_1$ the only solution is $\eta=0$, $\xi=(\xi_1,0,0)$. This is a solution without spinning. As in the cylindrical case using the available $\Gaxi$-symmetry and the orthogonality constraint we can assume that $\mathbf{R}=(0,R,0)$. This solution corresponds to \eqref{cylindricalpoint} with $\alpha=0$.

\item If $\lambda=I_2$ and $\eta\neq 0$ we obtain the family of solutions $\xi=(\frac{-I_1\eta}{I_2-I_1},\xi_2,\xi_3)$ where $\xi_2,\xi_3$ are arbitrary. As in the cylindrical case using the $\Gaxi$-symmetry and the orthogonality constraint we can assume that $\mathbf{R}=(0,R,0)$. Then  $\xi=|\xi|(\sin\theta,0,\cos\theta)$ and we get \eqref{hyperbolicpoint}.

\item If $\lambda=I_2$ and $\eta=0$ then $\xi=(0,\xi_2,\xi_3)$ for any $\xi_2,\xi_3$.and \eqref{orthV} implies that $\vecR$ is an eigenvector of $\mathbf{I}$ . There are two possibilities
\begin{itemize}
\item if $\vecR=(0,R_2,R_3)$ we can assume $\vecR=(0,R,0)$, but as $\xi\cdot \vecR=0$ and then $\xi=(0,0,\xi_3)$. This corresponds to a point in the hyperbolic family with either $\theta=0$ or $\theta=\pi$.

\item if $\vecR=(R,0,0)$, using the $S^1$ action we can assume that $\xi=(0,\xi,0)$ and the condition \eqref{orthV} gives the solution \eqref{isolatedpoint}.
\end{itemize}
\end{itemize}
\end{proof}
\begin{rem}
\label{rem:int-cyl-hyp}
Note that  the points along the cylindrical family with $\alpha=\frac{I_2-I_1}{I_1}$ are limiting cases (up to $\Gaxi$-translations) of the family of hyperbolic equilibria when $\theta\to\pm\frac{\pi}{2}$.
\end{rem}

\subsection{Non-orthogonal equilibria. Characterization}
\label{sec:conical}

If we assume that $\xi\cdot \mathbf{R}\neq0$  we will we obtain  new families of relative equilibria called parallel and conical equilibria for which in the latter the center of the orbit does not coincide with the center of the gravitational potential.
\begin{prop}
Let $((\mathrm{Id},\vecR), (\xi,\eta))$ be a relative equilibrium of the axisymmetric second order model, and assume that   $\vecR\cdot \xi \neq 0$. Then up to a $\Gaxi$-translations there are two possibilities\begin{itemize}
 \item  Conical: There exist $R>0$ and $\psi\in S^1$, $\psi\neq 0,\pm \frac{\pi}{2},\pi$ such that
 \[\vecR = R(\cos\psi,\sin\psi,0),\quad \xi=(\xi_1,\lambda R\sin\psi,0)\]
 where $\xi_1=3\frac{\cos\psi}{\lambda R^4}+\frac{I_1}{I_2-I_1}\eta$ and
 \begin{equation}
\eta=\frac{\cos\psi(I_2-I_1)}{2\sin^2\psi\lambda I_1R^6}\left((15\cos^2\psi-9)(I_1-I_2)-8R^2+6R^2\cos^2\psi+2\lambda^2R^7\sin^2\psi\right) \label{eq:eta}
\end{equation}

 \begin{equation}
 \lambda^2=\frac{\cos^2\psi \left(2R^2+(9-15\cos^2\psi)(I_1-I_2)\right)^2}
{2R^7\sin^2\psi\left(2R^2+(3-9\cos^2\psi)(I_1-I_2)\right)}. \label{eq:lambda}
\end{equation}

\item Parallel: $\vecR$ is an eigenvector of $\mathbf{I}$ satisfying $1+\frac{3}{2R^2}\left(1-3\left( \frac{\mathbf{R} \cdot \mathbf{IR}}{R^2}\right)\right)=0$, the spinning speed vanishes ($\eta=0$) and $\xi$ is an arbitrary multiple of $\vecR$. \end{itemize}
\label{conical-charact}
\end{prop}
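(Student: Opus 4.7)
The strategy is to use the axisymmetric structure $\mathbf{I}=\mathrm{diag}(I_1,I_2,I_2)$ to reduce equations \eqref{axi-releq} to a planar problem containing $\e_1$, and then to split into two cases according to whether $\vecR$ is an eigenvector of $\mathbf{I}$. The key structural observation is that, under the hypothesis $\vecR\cdot\xi\neq0$, both $\vecR$ and $\xi$ must lie in a common two-plane also containing $\e_1$. From \eqref{axieta} there exists $\alpha\in\R$ with $(\mathbf{I}-\widehat\vecR\widehat\vecR)\xi-I_1\eta\e_1=\alpha\xi$; using the identities $\mathbf{I}\xi=I_2\xi+(I_1-I_2)\xi_1\e_1$ and $-\widehat\vecR\widehat\vecR\xi=R^2\xi-\vecR(\vecR\cdot\xi)$ this forces $\vecR(\vecR\cdot\xi)\in\mathrm{span}(\xi,\e_1)$, whence $\vecR\in\mathrm{span}(\xi,\e_1)$. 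Dually, the explicit gradient \eqref{gradient2n} satisfies $\nabla V_2(\vecR)\in\mathrm{span}(\vecR,\mathbf{I}\vecR)\subseteq\mathrm{span}(\vecR,\e_1)$, so equation \eqref{axiV} gives $\xi(\vecR\cdot\xi)\in\mathrm{span}(\vecR,\e_1)$ and hence $\xi\in\mathrm{span}(\vecR,\e_1)$. The $S^1$-rotation $R_\theta$ around $\e_1$ then brings the common plane of $\{\vecR,\xi,\e_1\}$ into the $\e_1$-$\e_2$ plane, yielding the normalization $\vecR=R(\cos\psi,\sin\psi,0)$ and $\xi=(\xi_1,\xi_2,0)$.

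Next I would split according to whether $\sin\psi\cos\psi=0$. In the parallel subcase $\vecR$ is a multiple of $\e_1$ or $\e_2$, i.e.\ an eigenvector of $\mathbf{I}$. Taking the representative $\vecR=R\e_2$ (the case $\vecR\parallel\e_1$ being degenerate because of the continuous $S^1$ isotropy of the $\Gaxi$-action at $\vecR\parallel\e_1$), the $\e_1$- and $\e_3$-components of \eqref{axiV} force $\xi_1=\xi_3=0$ so that $\xi\parallel\vecR$, the $\e_2$-component is precisely the parallel condition $1+\frac{3}{2R^2}(1-3I_2)=0$, and substituting $\xi=\xi_2\e_2$ back into \eqref{axieta} produces an $\e_3$-component equal to $I_1\eta\xi_2$, forcing $\eta=0$.

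The remaining conical subcase has $\sin\psi\cos\psi\neq0$. Since $\sin\psi\neq0$, writing $\xi_2=\lambda R\sin\psi$ merely defines a parameter $\lambda$. I would then substitute $\vecR$, $\xi$ and $\mathbf{I}$ into the $\e_1$- and $\e_2$-components of \eqref{axiV} and take the combination $\sin\psi\cdot(\e_1\text{-eq})-\cos\psi\cdot(\e_2\text{-eq})$ in order to cancel the term proportional to $|\xi|^2$, obtaining a single scalar identity that, together with the $\e_1$-component of \eqref{axieta} (rewritten modulo multiples of $\xi$), produces the explicit expression $\xi_1=3\cos\psi/(\lambda R^4)+I_1\eta/(I_2-I_1)$. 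Reinserting this into the surviving scalar equation coming from \eqref{axiV} yields a linear equation in $\eta$ whose solution is \eqref{eq:eta}; substituting that value into the last independent scalar equation (equivalently, imposing consistency between the $\e_1$- and $\e_2$-components of the full system) yields a relation quadratic in $\lambda^2$ which simplifies to \eqref{eq:lambda}.

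The structural work is a clean consequence of the block form of $\mathbf{I}$ and of the $S^1$-symmetry, and requires essentially no computation. The main obstacle is the algebraic book-keeping in the conical subcase: isolating $\xi_1$, $\eta$ and $\lambda^2$ in the closed forms \eqref{eq:eta} and \eqref{eq:lambda} from the coupled nonlinear system \eqref{axi-releq} is mechanical once the planar reduction and the substitution $\xi_2=\lambda R\sin\psi$ are in place, but finding the right linear combinations to decouple $\eta$ and $\lambda^2$ is the delicate point of the argument.
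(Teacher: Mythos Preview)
Your proposal is correct and reaches the same conclusion, but the structural part of the argument is organized differently from the paper. The paper does not first prove coplanarity by a span argument; instead it takes $\widehat{\vecR}$ of \eqref{axiV}, subtracts it from \eqref{axieta}, and obtains directly the single torque-type identity
\[
\widehat{\xi}\mathbf{I}\xi-\widehat{\xi}I_1\eta\,\e_1-\frac{3}{R^5}\widehat{\vecR}\mathbf{I}\vecR=0,
\]
whose second and third components form a $2\times 2$ linear system in $(\xi_1,R_1)$. The case split is then ``matrix invertible'' (leading to $R_1=0$ and hence to orthogonal or parallel equilibria) versus ``matrix degenerate'' (which forces $\xi_{2,3}=\lambda R_{2,3}$ and immediately produces the formula for $\xi_1$). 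Your route---first proving $\vecR,\xi\in\mathrm{span}(\vecR,\e_1)$ from the block structure of $\mathbf{I}$ and then solving four planar scalar equations---is more conceptual at the outset and makes the role of the $S^1$-symmetry transparent, but the paper's cross-product trick is what gives the $\xi_1$ formula in a single line and makes the subsequent case analysis purely linear-algebraic. Your elimination of $|\xi|^2$ via $\sin\psi\cdot(\e_1)-\cos\psi\cdot(\e_2)$ combined with the $\alpha$-eliminated form of \eqref{axieta} indeed reproduces the same $\xi_1$ (I checked), so the remaining derivation of \eqref{eq:eta} and \eqref{eq:lambda} goes through as you describe. One small point: your dismissal of $\vecR\parallel\e_1$ as ``degenerate'' is informally stated; the precise reason $\eta$ can be normalized to zero there is that the velocity of a relative equilibrium is only determined modulo $\gaxi_q=\mathrm{span}((\e_1,1))$, not a $\Gaxi$-translation in the strict sense.
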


\begin{proof}
From equations \eqref{axi-releq} and \eqref{gradient2n}, if we take the cross product of \eqref{axiV} with $\mathbf{R}$ and subtract it from \eqref{axieta}, we get the condition
$$\hat \xi\mathbf I \xi-\hat \xi I_1\e_1\eta-\hat{\mathbf{R}}\frac{3\mathbf {I R}}{R^5}=0$$
which in coordinates  is
$$\begin{bmatrix} 0 \\ \xi_3 I_1 \xi_1 -\xi_3 I_2\xi_1 -\eta \xi_3 I_1-3\frac{R_3}{R^5}(I_1-I_2)R_1\\
-\xi_2 I_1 \xi_1 +\xi_2 I_2\xi_1 +\eta \xi_2 I_1+3\frac{R_2}{R^5}(I_1-I_2)R_1\\
\end{bmatrix}=\begin{bmatrix}0 \\ 0 \\ 0\end{bmatrix}.$$
These two non-linear equations can be written as the system
$$\begin{bmatrix}\xi_3 & \frac{-3R_3}{|\vecR|^5}\\ -\xi_2 & \frac{3R_2}{|\vecR|^5} \end{bmatrix} \begin{bmatrix} \xi_1 \\ R_1\end{bmatrix} =\frac{I_1\eta}{I_2-I_1}\begin{bmatrix}-\xi_3\\ \xi_2\end{bmatrix}.$$
If the coefficients matrix is invertible then the solution is given by  $$\xi_1=\frac{I_1}{I_1-I_2}\eta,\quad R_1=0.$$ This relation and \eqref{axiV}
\begin{equation}\frac{\mathbf{R}}{R^3}+\frac{3\mathbf{R}}{2R^5}+\frac{3\mathbf{ I R}}{R^5}-\frac{15\mathbf{R}(\mathbf{R}\cdot \mathbf{ I R})}{2 R^7} +\xi(\mathbf{R}^T\xi)-\mathbf{R}\xi^T\xi=0\label{2ndeq}\end{equation}
forces either $\mathbf{R}\cdot \xi=0$ or $\mathbf{R}\parallel \xi$. The first case has already been covered in Proposition \ref{prop:axi-orth}. If $\mathbf{R}\parallel\xi$  then $\eta=0$ and we reobtain the condition for parallel equilibria in the asymmetric case (Subsection \ref{par:charact}) so we get the condition $1+\frac{3}{2R^2}\left(1-3\left( \frac{\mathbf{R} \cdot \mathbf{IR}}{R^2}\right)\right)=0$.

Assume now that the matrix of this system does not have full rank, that is, either $R_3=R_2=0$ or there is some $\lambda\in \R$ such that $\xi_2=\lambda R_2$ and $\xi_3=\lambda R_3$. If $R_3=R_2=0$, since $R=(*,0,0)$ then  \eqref{2ndeq} forces  either $\mathbf{R}\cdot \xi=0$ or $\mathbf{R}\parallel\xi$ as before, so there are no new cases. If $\xi_2=\lambda R_2$ and $\xi_3=\lambda R_3$ with $\lambda=0$ then the solutions of the  system are either $\xi=(*,0,0),\ \vecR=(0,*,*)$ or $\xi=(*,0,0),\ \vecR=(*,0,0)$ and again this does not offer new solutions.

Suppose now  that $\lambda \neq 0$. Using the $S^1$-action we can set $\mathbf{R}=(R_1,R_2,0)$. By the degeneracy assumption $\xi_2=\lambda R_2$ and $\xi_3=\lambda R_3$, $\xi_3=0$. The solution of the  system is now  $$\xi_1=3\frac{R_1}{\lambda R^5}+\frac{I_1}{I_1-I_2}\eta.$$ If we define the angle $\psi$ by $\mathbf{R}=(R\cos \psi,R\sin \psi,0)$, then the second component of the vector equation \eqref{2ndeq} is equivalent to
\begin{align*}-\lambda^2 (\sin^2 \psi \cos \psi) R^3 +\frac{I_1}{I_1-I_2}(\sin^2\psi)\lambda\eta R^2-\frac{1}{R^2}\left(3\cos^2\psi -4\right)\cos\psi \nonumber \\
-\frac{3}{2}\frac{\cos\psi}{R^4}\left((I_1-I_2)5\cos^2\psi-1-2I_1+5I_2\right)=0,\end{align*}
and $\eta$ can be solved. Substituting this $\eta$ in the third component of \eqref{2ndeq} gives $A+B\lambda^{-2}=0$ from which $\lambda$ can be easily found. The exact expressions for both variables are given in \eqref{eq:eta} and \eqref{eq:lambda}.
\end{proof}
For each radius $R$ and $\psi \in S^1$ ($\psi \neq 0,\pm \frac{\pi}{2},\pi$) there exists a conical equilibrium described  by Proposition \ref{conical-charact}. To understand the behavior of this equilibrium we can  expand the expressions for large $R$, obtaining
\begin{align*}
\mathbf{R}&= (R\cos\psi,R\sin\psi,0)\\
\xi &= (-R^{-\frac{3}{2}}\sin\psi,R^{\frac{3}{2}}\cos\psi,0) + O(R^{-\frac{7}{2}})\\
\eta &= 4\frac{I_2-I_1}{I_1}\sin\psi\, R^{-\frac{3}{2}}+O(R^{-\frac{7}{2}}).
\end{align*}
Note that with this approximation the orbit satisfies $\mathbf{R}\cdot \xi\approx 0$, but if higher order terms are taken into account then  \begin{equation}
 \sin^2\varkappa=\frac{(\xi\cdot \mathbf{R})^2}{\xi^2 R^2}=9(I_2-I_1)^2\sin^2\psi \cos^2\psi \frac{1}{R^4}+O\left(\frac{1}{R^6}\right)                                                                                                                                                                                \label{offset}.\end{equation}
where the offset angle $\varkappa$ is defined in \eqref{defoffset}.
\begin{rem}
Conical equilibria are orbits in a plane that does not contain the center of attraction. Although the offset is very small ($\varkappa$ decays like $R^{-2}$) this small value allows for the existence of this family of equilibria.
\end{rem}

\begin{rem}
In the case of three different moments of inertia the conical orbits can exist only for  very small radius where the second order potential looses its physical validity (see Remark \ref{rem:beckbound}). However, the situation is completely different for the axisymmetric case, in that the family of conical equilibria exists for arbitrary $R$.
\end{rem}

\begin{figure}[h]
\begin{center}
\subfloat{\includegraphics[width=2.54cm]{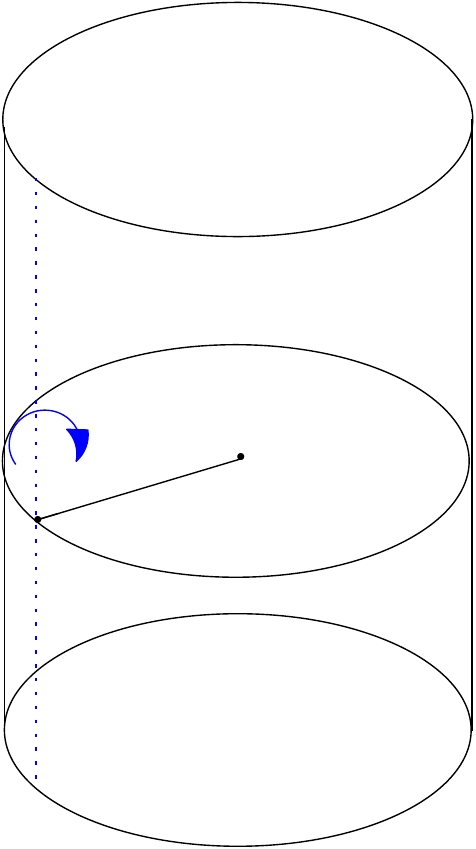}}\hspace{10mm}
\subfloat{\includegraphics[width=3.1cm]{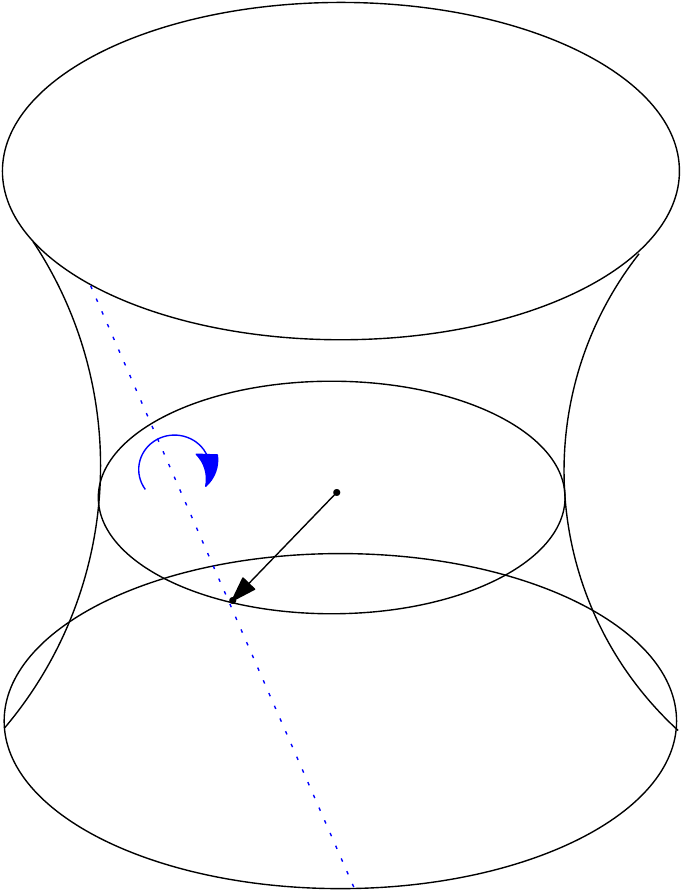}}\hspace{10mm}
\subfloat{\includegraphics[width=3.5cm]{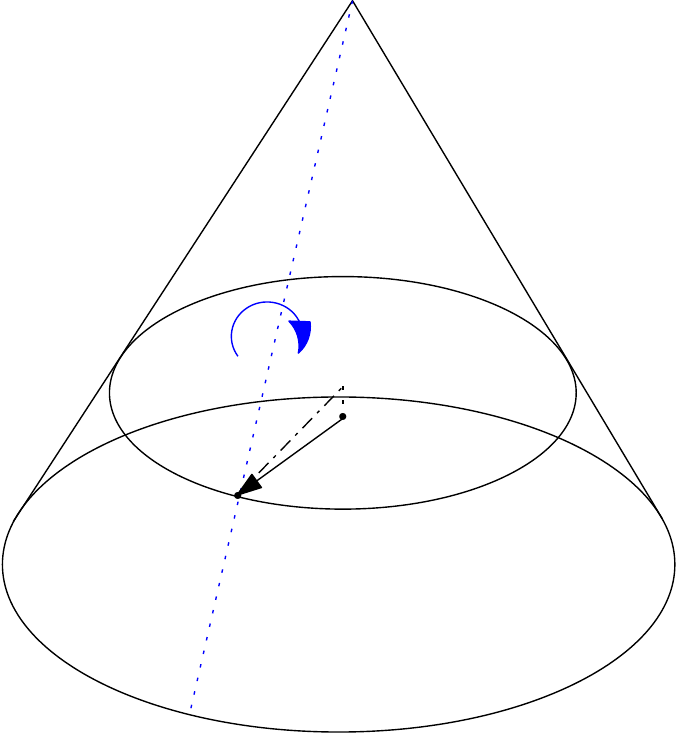}}

\caption{The cylindrical, hyperbolic and conical families of relative equilibria for the axisymmetric body. The evolution of the isolated equilibrium does not produce a three dimensional figure and has not been included.\label{releq_axi}}
\end{center}
\end{figure}

\begin{rem}\label{rem conic radius}
The names of these families of equilibria are based on the surface that describes the symmetry axis of the body as it travels along the orbit (see Figure \ref{releq_axi} where the dotted line represents the symmetry axis of the body).
In the classification introduced  in \cite{o2004steady}, cylindrical, hyperbolic and conical equilibria are called type I, III and IV respectively. The solutions of type II  are our parallel equilibria. It follows from the proof of Proposition \ref{conical-charact} that parallel equilibria only exist for small $R$ since the equations obtained for the derivation of this family are identical to those obtained for the parallel equilibria of the asymmetric case in Subsection \ref{par:charact}. In particular the bound \eqref{parallelbound} also holds for the parallel equilibria of the axisymmetric case.
\end{rem}

\begin{rem}
The cylindrical, hyperbolic and conical families all have  analogues in the restricted problem,  see \cite{pringle1964bounds}, \cite{likins1966uniqueness}.
\end{rem}

\subsection{Cylindrical equilibria. Stability}
We will now study the  family of cylindrical equilibria of the axisymmetric problem, where the satellite follows a circular orbit and in addition it spins with angular velocity parallel to the orbital angular velocity. Using  \eqref{cylindricalpoint} the metric and the angular momentum at the equilibrium point $(\mathrm{Id},\vecR)$ are
$$\lmet\cdot,\cdot\rmet_{(\mathrm{Id},\vecR)}=\begin{bmatrix} I_1+R^2 & 0 & 0 & 0 &0 &R\\
0 & I_2 & 0 & 0 & 0 & 0 \\
0 & 0 & I_2+R^2 & -R& 0 & 0 \\
0& 0 & -R & 1 & 0 &0 \\
0 &0 & 0 & 0 & 1& 0 \\
R_0 & 0 & 0 & 0 & 0 & 1\\
  \end{bmatrix}, \quad \mu=\II(\text{Id},\vecR)(\xi,\eta)=\begin{bmatrix}(I_1(1+\alpha)+R^2)\xi_1 \\0\\0\\-(1+\alpha)I_1\xi_1 \end{bmatrix},$$
therefore the stabilizer of the momentum value $\mu\in (\gaxi)^*$ is
\[\gaxi_\mu = \Big\langle\e_1,\ \e_4 \Big\rangle \implies (\gaxi_\mu)^\perp =\Big\langle\e_2,\ \e_3 \Big\rangle \subset \mathfrak{so}(3)\oplus \R \cong \R^4.\]
The Arnold form is
\begin{equation} \Ar= \begin{bmatrix}
\xi_1^2\frac{((1+\alpha)I_1+R^2)(I_1(1+\alpha)-I_2)}{I_2+R^2} &0 \\
0 & \xi_1^2\frac{((1+\alpha)I_1+R^2)(I_1(1+\alpha)+R^2-I_2)}{I_2+R^2}\\
\end{bmatrix}.\label{arnold-cyl}
\end{equation}
One possible choice for the Reduced Energy Momentum splitting is
\[\Vint=\Big\langle R\e_3+(R^2+I_1(1+\alpha)-I_2)\e_4,\ \e_5 \Big\rangle \subset \mathcal{V}\subset T_{(\mathrm{Id},\vecR)} Q\cong \R^6.\]
The Smale form in this basis is diagonal and has entries
\begin{equation*}
S_1=\frac{R^2+I_1(1+\alpha)-I_2}{R^5}\left(2R^2((4+\alpha)I_1-4I_2)+3(I_1-I_2)(I_1(1+\alpha)-I_2)\right),\quad S_2=\xi_1^2.
\end{equation*}
Using again Proposition \ref{reducedEMsublk} applied to the previous expressions we get

\begin{prop}
\label{cylstab}
For large orbits satisfying \begin{equation}\label{largeorbitcilindrical}R^2>-(1+\alpha)I_1,\end{equation} the cylindrical equilibrium with spinning quotient $\alpha\in \R$ satisfying $$I_1(1+\alpha)>I_2\quad \text{ and }\quad I_1(4+\alpha)-4 I_2>-\frac{3}{2R^2}(I_1-I_2)(I_1(1+\alpha)-I_2).$$
is $\Gaxi_\mu$-stable.

\end{prop}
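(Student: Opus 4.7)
The plan is to invoke the Reduced Energy Momentum method (Proposition \ref{reducedEMsublk}), which certifies $\Gaxi_\mu$-stability whenever both the Arnold form $\Ar$ on $(\gaxi_\mu)^\perp$ and the Smale form $\Sm$ on $\Vint$ are positive definite. Since these two bilinear forms have already been displayed just before the statement, and both happen to be diagonal in the chosen bases, the proof reduces to a sign analysis of four scalar expressions.

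First I would verify positivity of the Arnold eigenvalues $A_1,A_2$ from \eqref{arnold-cyl}. Both share the common positive factor $\xi_1^2/(I_2+R^2)$ and both contain the factor $(1+\alpha)I_1+R^2$, which is positive by the large-orbit hypothesis $R^2>-(1+\alpha)I_1$. The remaining factors are $I_1(1+\alpha)-I_2$ for $A_1$ and $I_1(1+\alpha)+R^2-I_2$ for $A_2$; both are positive thanks to the spinning-quotient hypothesis $I_1(1+\alpha)>I_2$, the second one a fortiori.

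Next I would handle the Smale eigenvalues. Clearly $S_2=\xi_1^2>0$ for any non-trivial cylindrical motion. For $S_1$ the prefactor $(R^2+I_1(1+\alpha)-I_2)/R^5$ is positive by the chain of inequalities used above, so the sign of $S_1$ is governed by the bracket
\[
2R^2\bigl(I_1(4+\alpha)-4I_2\bigr)+3(I_1-I_2)\bigl(I_1(1+\alpha)-I_2\bigr).
\]
This quantity is positive exactly when the second hypothesis of the proposition holds, which is just that bracket after dividing by $2R^2>0$ and isolating $I_1(4+\alpha)-4I_2$.

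With all four diagonal entries positive, Proposition \ref{reducedEMsublk} immediately yields $\Gaxi_\mu$-stability. I do not anticipate any genuine obstacle: the only mildly delicate step is matching the hypothesis of the statement to the natural form of $S_1$, which is a cosmetic rearrangement. All the real work was absorbed into the prior computation of the Arnold and Smale forms for the basis adapted to the isotypic decomposition mentioned in the remark of Subsection \ref{asym-isotropy}, which is what makes both forms diagonal in the first place.
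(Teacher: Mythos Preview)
Your proposal is correct and follows exactly the approach the paper takes: the paper's entire proof is the single sentence ``Using again Proposition \ref{reducedEMsublk} applied to the previous expressions we get'', and your sign analysis of the diagonal entries of \eqref{arnold-cyl} and of $S_1,S_2$ is precisely what that sentence is asking the reader to supply. Your matching of the bracket in $S_1$ with the second hypothesis via division by $2R^2$ is the intended (and only) manipulation.
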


\begin{rem}
Note that for any $I_1$ and $I_2$, if $\alpha$ is large enough then the cylindrical equilibrium will be $\Gaxi_\mu$-stable. This is the analogue of the fast top condition for the heavy top problem: the upright spinning equilibrium stable for high angular velocities.
\end{rem}

\begin{rem} Axisymmetric bodies can be classified as \emph{oblate} when $I_1>I_2$ or \emph{prolate} when $I_1<I_2$
 Note that  all cylindrical equilibria for oblate bodies satisfying $\alpha>0$ are $\Gaxi_\mu$-stable and in this case the large orbit condition \eqref{largeorbitcilindrical} is satisfied for any $R$.
\end{rem}

\subsection{Hyperbolic equilibria. Stability}
According to the characterization \eqref{hyperbolicpoint} of the family of hyperbolic equilibria, the metric and the angular momentum at the equilibrium point are
$$\lmet \cdot, \cdot \rmet_{(\mathrm{Id},\vecR)}=\begin{bmatrix} I_1+R^2 & 0 & 0 & 0 &0 &R\\
0 & I_2 & 0 & 0 & 0 & 0 \\
0 & 0 & I_2+R^2 & -R& 0 & 0 \\
0& 0 & -R & 1 & 0 &0 \\
0 &0 & 0 & 0 & 1& 0 \\
R & 0 & 0 & 0 & 0 & 1\\
  \end{bmatrix}\quad, \mu=\II(\text{Id},\vecR)(\xi,\eta)=\begin{bmatrix}(I_2+R^2)\xi\sin\theta \\ 0 \\ (I_2+R^2)\xi\cos\theta \\ -I_2\xi\sin\theta \end{bmatrix}.$$
Therefore we have
$$
\gaxi_\mu = \Big\langle\sin\theta\e_1+\cos\theta\e_3,\ \e_4 \Big\rangle$$
and $$(\gaxi_\mu)^\perp =\Big\langle\e_2,\  -(I_2+R^2)\cos\theta\e_1+R^2\sin\theta\e_3+(I_2+R^2)\sin\theta\e_4 \Big\rangle \subset \mathfrak{so}(3)\oplus \R \cong \R^4.
$$
With respect to this basis the Arnold form is diagonal and has eigenvalues
\begin{equation}A_1=(R^2+I_2)|\xi|^2\cos^2\theta I_2 R^{-2},\quad A_2=\frac{|\xi|^2R^2(R^2+I_2)}{I_2}\left(R^4+2R^2\cos^2\theta I_2+I_2^2\cos^4\theta\right) \label{arnold-hyp}.\end{equation}
We can choose the following basis of the subspace of internal variations \[\begin{split}
   \Vint=\Big\langle (R^2+I_2)(R\e_3+I_2\sin\theta\cos\theta\e_6)+ R^4+R^2I_2(1+\cos^2\theta) +I_2^2 \cos^2\theta)\e_4   , \hspace{1cm}\\
  ,-2R\sin\theta\e_2+(R^2+I_2)\cos\theta\e_5 \Big\rangle \subset \mathcal{V}\subset T_{(\mathrm{Id},\vecR)} Q\cong \R^6
  \end{split}
\]
and after some computations $\Sm$ takes diagonal form with eigenvalues
\begin{equation}S_1=3(I_1-I_2)(R^2+I_2\cos^2\theta)^2(R^2+I_2)^2R^{-5},\ S_2=2R^4-3R^2(I_1+I_2)+15I_2(I_2-I_1)\label{smale-hyp}.\end{equation}
We can now state the following stability result for hyperbolic equilibria
\begin{prop}\label{hypstab}  For large orbits satisfying
\begin{equation}R^2>\frac{1}{2}\left(3(1-I_2)+\sqrt{3+102I_2-351I_2^2}\right)\label{hyp-bound}\end{equation}
  hyperbolic equilibria  are $\Gaxi_\mu$-stable for oblate bodies  ($I_1>I_2$)  and unstable for prolate bodies.
\end{prop}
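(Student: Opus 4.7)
My plan is to apply the Reduced Energy Momentum method (Proposition \ref{reducedEMsublk}) directly to the already-diagonal Arnold and Smale forms \eqref{arnold-hyp} and \eqref{smale-hyp}. Stability then reduces to checking positivity of the four scalars $A_1,A_2,S_1,S_2$, and a single negative eigenvalue would, by Proposition \ref{negativeeigenvalue}, already produce linear instability.

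First I would dispose of the Arnold part: both $A_1$ and $A_2$ are manifestly positive, since every factor is positive (using $I_2>0$ from Remark \ref{inertiatensor}, and $\cos\theta\neq 0$ along the hyperbolic branch from \eqref{hyperbolicpoint}), and the bracket in $A_2$ is a sum of squares in $\cos^2\theta$. So the Arnold obstruction is vacuous and stability is controlled entirely by $\Sm$.

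Next I would read off the sign of the first Smale eigenvalue. In $S_1 = 3(I_1-I_2)(R^2+I_2\cos^2\theta)^2(R^2+I_2)^2 R^{-5}$ every factor besides $I_1-I_2$ is positive, hence $\mathrm{sgn}(S_1)=\mathrm{sgn}(I_1-I_2)$. For prolate bodies this is negative, and Proposition \ref{negativeeigenvalue} then gives linear instability with no condition on $R$; this handles the prolate half of the statement. For oblate bodies, $S_1>0$ automatically and I am reduced to analyzing the single quadratic $S_2 = 2R^4 - 3(I_1+I_2)R^2 + 15I_2(I_2-I_1)$ in $x=R^2$.

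The final step, where the stated radius bound emerges, is to solve $S_2>0$. Using the trace constraint $I_1+2I_2=1$ from Remark \ref{inertiatensor} to eliminate $I_1$, the quadratic has positive leading coefficient and its discriminant and roots depend only on $I_2$; the larger of the two real roots is exactly the critical radius in \eqref{hyp-bound}. Since the leading coefficient is positive, \eqref{hyp-bound} is equivalent to $S_2>0$, which combined with the positivity of $A_1,A_2,S_1$ yields $\Gaxi_\mu$-stability. The mildest obstacle here is a one-variable bookkeeping check that the discriminant is non-negative on the admissible interval $I_2\in(0,1/2)$, so that the root in \eqref{hyp-bound} is real; this is immediate from evaluating a low-degree polynomial in $I_2$.
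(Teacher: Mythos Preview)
Your overall strategy---check positivity of $A_1,A_2,S_1,S_2$ and invoke Propositions \ref{reducedEMsublk} and \ref{negativeeigenvalue}---is exactly the paper's, and your treatment of the Arnold form and of the oblate case is correct. But there is a genuine gap in your prolate argument.

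You write that for prolate bodies $S_1<0$ ``gives linear instability with no condition on $R$.'' This is not what Proposition \ref{negativeeigenvalue} says: it requires the \emph{total} number of negative eigenvalues of $\Ar$ and $\Sm$ to be odd. For prolate bodies you have $S_1<0$, but you have not controlled the sign of $S_2$; if $S_2<0$ as well, the count is two and the proposition is silent. And $S_2$ \emph{can} be negative for small $R$: for instance at $I_2$ slightly above $\tfrac13$ the constant term $15I_2(I_2-I_1)=15I_2(3I_2-1)$ is small and the quadratic $2x^2-3(1-I_2)x+15I_2(3I_2-1)$ in $x=R^2$ has a positive root. This is precisely why the large-orbit hypothesis \eqref{hyp-bound} appears in the prolate conclusion too, not just the oblate one: it forces $S_2>0$, leaving $S_1$ as the unique negative eigenvalue. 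The fix is immediate---your analysis of $S_2>0$ via the quadratic applies verbatim in the prolate case---but as written your prolate argument is incomplete.

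A minor side remark: your closing claim that the discriminant $3+102I_2-351I_2^2$ is non-negative on all of $(0,\tfrac12)$ is false (it turns negative for $I_2$ somewhat above $\tfrac13$). This does not damage the proof, since when the discriminant is negative the quadratic $S_2$ has no real roots and is everywhere positive, so \eqref{hyp-bound} is vacuous; but you should phrase the case split accordingly rather than asserting non-negativity.
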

\begin{proof}
Using Proposition \ref{reducedEMsublk} we only need to check for the signs of the eigenvalues \eqref{smale-hyp}, since it is clear that the eigenvalues of the Arnold form \eqref{arnold-hyp} are always positive if $\theta\neq \pm\frac{\pi}{2}$ and  $R>0$. The  eigenvalue $S_2$ in \eqref{smale-hyp} is positive if $R$ satisfies the  large orbit condition \eqref{hyp-bound}. Note that $S_1$ is the only eigenvalue that can change sign. In particular  it has the same sign as $(I_1-I_2)$ and therefore if $I_1>I_2$ the equilibrium is $\Gaxi_\mu$-stable and if $I_1<I_2$ but $R$ satisfies the large orbit condition the equilibrium is unstable using Proposition \ref{negativeeigenvalue}.
\end{proof}

\begin{rem}
Using the simple bound $0<I_2<\frac{1}{2}$ we can check that the right-hand side of \eqref{hyp-bound} is bounded above by $2$. That is, $R$ satisfies the hyperbolic large orbit condition \eqref{hyp-bound} if $R>\sqrt{2}$.
\end{rem}

\subsection{Conical equilibria. Stability}
Following as in previous sections we now apply the Reduced Energy Momentum to the conical family. This results in very long computations but doable using a computer algebra software. Omitting here these details, the series expansion of the Arnold form is diagonal with eigenvalues
\[
A_1 =\frac{R}{\sin^2\psi I_2}+O(R^{-1}), \quad
A_2 =(I_2-I_1)\frac{3\sin^2\psi}{R^3\cos^2\psi}+O(R^{-3})
\]
As for the Smale form we obtain a $2\times 2$ symmetric matrix with entries
\begin{align*}
S_1&=\frac{1}{R^3\sin^2\psi} +\frac{3(I_1-I_2)(3\cos^2\psi-1)}{2R^5 \sin^2\psi}+O(R^{-7})\\
S_2&= \frac{\cos^2\psi}{R\sin^2\psi}+\frac{(45I_1-39I_2)\sin^2\psi+6I_1-2I_2}{2R^3\sin^2\psi}+O(R^{-5})\\
S_{12}&=\frac{\cos\psi}{R^2\sin^2\psi}-\frac{\cos\psi\left((21I_1-23I_2)\sin^2\psi-6I_1+4I_2\right)}{2R^4\sin^2\psi}+O(R^{-6})
\end{align*}
being $S_{12}$ the off-diagonal entry. To check for  positive definiteness of this block we  use Sylvester's criteria. Using again a series expansion we obtain
$$S_1S_2-S_{12}^2=\frac{4I_2-3I_1}{R^6}+O(R^{-8})$$
and now using again Propositions \ref{reducedEMsublk} and \ref{negativeeigenvalue} we obtain the following result.

\begin{prop}
Consider a conical equilibrium with angle $\psi\neq 0,\pm \frac{\pi}{2},\pi$.  If $R$ is large enough then
\begin{itemize}
 \item if $I_1 < I_2$ (prolate body) the conical equilibrium is $\Gaxi_\mu$-stable.
 \item if $I_1 >I_2$ (oblate body) and $4I_2>3I_1$ the conical equilibrium is unstable.
\end{itemize}
In the remaining cases the Reduced Energy Momentum method is inconclusive.
\label{conicalstab}
\end{prop}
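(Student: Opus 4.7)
The plan is to apply the Reduced Energy Momentum criterion (Proposition \ref{reducedEMsublk}) directly to the asymptotic expressions for $\Ar$ and $\Sm$ already displayed in the text, and then to invoke Proposition \ref{negativeeigenvalue} in the oblate case to promote a single negative eigenvalue into linear instability. Because $\Ar$ is diagonal its eigenvalues are $A_1,A_2$ as given, and the positive definiteness of the $2\times2$ symmetric $\Sm$ reduces, by Sylvester's criterion, to the positivity of $S_1$ and of $S_1 S_2 - S_{12}^2$.

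First I would verify that for $R$ large enough the signs of each relevant quantity are controlled by their leading asymptotic term. The leading terms of $A_1$, $S_1$ and $S_2$ are $R/(I_2\sin^2\psi)$, $1/(R^3\sin^2\psi)$ and $\cos^2\psi/(R\sin^2\psi)$ respectively, all positive under the standing hypothesis $\psi\neq 0,\pm\tfrac{\pi}{2},\pi$. The leading term of $A_2$ is $3(I_2-I_1)\sin^2\psi/(R^3\cos^2\psi)$, whose sign is $\operatorname{sgn}(I_2-I_1)$, while the leading term of the relevant Sylvester determinant is $(4I_2-3I_1)/R^6$, whose sign is $\operatorname{sgn}(4I_2-3I_1)$. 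Hence for $R$ large the definiteness of both forms is dictated purely by the signs of $I_2-I_1$ and $4I_2-3I_1$.

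In the prolate regime $I_1<I_2$ one has $I_2-I_1>0$, giving $A_2>0$, and $4I_2-3I_1>4I_1-3I_1=I_1>0$, giving $S_1S_2-S_{12}^2>0$. Combined with $A_1,S_1,S_2>0$, both $\Ar$ and $\Sm$ are positive definite for large $R$, so Proposition \ref{reducedEMsublk} yields $\Gaxi_\mu$-stability. In the oblate regime with $4I_2>3I_1$ we instead have $A_2<0$ (since $I_1>I_2$) while $A_1,S_1,S_2>0$ and $S_1S_2-S_{12}^2>0$ for large $R$, so the block-diagonal Hessian $\Ar\oplus\Sm$ has exactly one negative eigenvalue; Proposition \ref{negativeeigenvalue} then delivers linear instability.

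The main technical obstacle, and the place I would spend most care, is bookkeeping the size of the error terms $O(R^{-1})$, $O(R^{-3})$, $O(R^{-5})$, $O(R^{-7})$, $O(R^{-8})$ uniformly enough so that none of them can reverse the sign of the relevant leading term; this requires a threshold $R_0=R_0(I_1,I_2,\psi)$ depending on $\psi$ through $\cos^2\psi$ and $\sin^2\psi$, and one must check that each use of Proposition \ref{reducedEMsublk}/\ref{negativeeigenvalue} is applied beyond this threshold. The remaining oblate subcase $4I_2\le 3I_1$ is precisely where $S_1S_2-S_{12}^2\le 0$, so $\Sm$ acquires its own negative eigenvalue and the full Hessian carries two negative eigenvalues; the Reduced Energy Momentum method is then genuinely inconclusive, explaining the hypothesis pattern of the proposition.
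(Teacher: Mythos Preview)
Your proposal is correct and follows essentially the same route as the paper: read off the leading terms of the diagonal Arnold form and of the $2\times 2$ Smale block, apply Sylvester's criterion to $\Sm$ via $S_1$ and $S_1S_2-S_{12}^2=(4I_2-3I_1)/R^6+O(R^{-8})$, and then invoke Propositions~\ref{reducedEMsublk} and~\ref{negativeeigenvalue}. Your explicit remark that $I_1<I_2$ forces $4I_2-3I_1>0$ and your discussion of the $\psi$-dependent threshold $R_0$ are details the paper leaves implicit.
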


\subsection{Isolated equilibria. Stability}
Using the characterization \eqref{isolatedpoint} of the family of isolated equilibria we get $$\lmet \cdot, \cdot \rmet_{(\mathrm{Id},\vecR)}=\begin{bmatrix} I_1 & 0 & 0 & 0 &0 &0\\
0 & I_2+R^2 & 0 & 0 & 0 & -R \\
0 & 0 & I_2+R^2 & 0& R & 0 \\
0& 0 & 0 & 1 & 0 &0 \\
0 &0 & R & 0 & 1& 0 \\
0 & -R & 0 & 0 & 0 & 1\\
  \end{bmatrix},\quad  \II(\text{Id},\vecR)=\begin{bmatrix}
  I_1 & 0 & 0 & -I_1 \\
  0 & I_2+R^2 & 0 & 0 \\
  0 & 0 & I_2+R^2 & 0 \\
  -I_1 & 0 & 0 & I_1 \\
  \end{bmatrix}.
  $$
Therefore the angular momentum and its stabilizer are
\[\mu=\II(\text{Id},\vecR)(\xi,\eta)=(I_2+R^2)\xi_3 \e_3\in(\gaxi)^* \implies \gaxi_\mu = \Big\langle \e_3,\ \e_4 \Big\rangle. \]
Note that we also have
\[\gaxi_{q}=\Big\langle\e_1+\e_4 \Big\rangle \subset \gaxi\]
and this is precisely the kernel of the locked inertia tensor.

As the action of $\Gaxi$ at the equilibrium point $q=(\mathrm{Id},\vecR)\in Q$ is not locally free
we cannot apply the Energy Momentum method as described in Proposition \ref{reducedEMsublk}. To study its stability we will need the singular version stated in Proposition \ref{singularVamended}. As we have already computed $\gaxi_{q}$ and $\gaxi_\mu$ we can check that
\[\mathfrak{t}=\Big\langle \e_2\Big\rangle,\quad \mathfrak{q}^\mu=0\quad\text{and}\]
\[\Sigma=\Big\langle\e_4,-R\e_3+(I_2+R^2)\e_5,R\e_2+(I_2+R^2)\e_6\Big\rangle\subset T_{(\mathrm{Id},\vecR)}Q\]
satisfy the conditions of Proposition \ref{singularVamended}. After some substitutions we find that $(\ed^2 V_{(\xi,0)}+\mathrm{corr}_{(\xi,0)})\restr{\Sigma}$ at the isolated equilibrium is diagonal with respect to the chosen basis and has eigenvalues
\begin{align*}
H_1&=(R^4+3R^2(I_1-2I_2)+15I_2(I_1-I_2))\frac{1}{R^5(R^2+I_2)} \\
H_2&=3(I_2-I_1)(R^2+I_2)^2R^{-5} \\
H_3&=\big( R^2(4I_2-3I_1)-6I_2(I_1-I_2) \big)\frac{R^2+I_2}{R^5}.
\end{align*}
From these computations we can obtain the following stability result.
\begin{prop}
Consider an isolated equilibrium with a large orbit satisfying
$$R^4>3R^2(2I_2-2I_1)+15I_2(I_2-I_1).$$
Then
\begin{itemize}
 \item if $I_1<I_2$ (prolate bodies) the equilibrium is $G_\mu$-stable.
 \item if $I_1>I_2$ and $4I_2-3I_1>\frac{6I_2(I_1-I_2)}{R^2}$ the equilibrium is unstable.
\end{itemize}
For the remaining cases the method is inconclusive. \label{isolatedstab}
\end{prop}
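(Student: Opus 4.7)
The eigenvalues $H_1$, $H_2$, $H_3$ of the bilinear form $(\ed^2 V_{(\xi,0)} + \mathrm{corr}_{(\xi,0)})\restr{\Sigma}$ have already been displayed just above the proposition, so the proof reduces to a sign analysis of those three expressions, followed by an appeal either to the singular Reduced Energy Momentum result (Proposition \ref{singularVamended}) for stability, or to Proposition \ref{negativeeigenvalue} for instability.

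My plan would run as follows. First, I would note that the common denominators $R^5(R^2+I_2)$ and $(R^2+I_2)/R^5$ are strictly positive for $R>0$, so the signs of $H_1,H_2,H_3$ coincide with the signs of their respective numerators. In particular $H_2=3(I_2-I_1)(R^2+I_2)^2R^{-5}$ has the sign of $I_2-I_1$, which immediately separates the prospective stable (prolate) regime from the prospective unstable (oblate) one.

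Next I would verify that the large-orbit hypothesis $R^4>3R^2(2I_2-2I_1)+15I_2(I_2-I_1)$ puts $R^2$ above the relevant root of the quadratic (in $R^2$) coming from the numerator of $H_1$, forcing $H_1>0$. For $H_3$ I would split cases: in the prolate regime $I_1<I_2$ one has $4I_2-3I_1>I_2>0$ together with $-6I_2(I_1-I_2)>0$, so $H_3>0$ automatically; in the oblate regime the second stated hypothesis is literally the condition $R^2(4I_2-3I_1)-6I_2(I_1-I_2)>0$, i.e.\ $H_3>0$. Combining these: in the prolate case all three eigenvalues are strictly positive, the restricted quadratic form is positive definite, and Proposition \ref{singularVamended} delivers $G_\mu$-stability; in the oblate case under the extra hypothesis one has $H_1,H_3>0$ but $H_2<0$, yielding exactly one negative eigenvalue, so Proposition \ref{negativeeigenvalue} gives linear instability.

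The only genuinely delicate point is not the sign analysis itself but the justification that the singular version of the method applies here at all. Because the $\Gaxi$-action at the isolated equilibrium has the non-trivial continuous isotropy $\gaxi_q=\langle \e_1+\e_4\rangle$, which precisely coincides with the kernel of the locked inertia tensor, the standard Reduced Energy Momentum method cannot be used and one must verify instead that the subspaces $\mathfrak{t}$, $\mathfrak{q}^\mu$, $\Sigma$ exhibited above the statement meet the hypotheses of Proposition \ref{singularVamended}, and that the augmented form $\ed^2V_{(\xi,0)}+\mathrm{corr}_{(\xi,0)}$ is the correct object whose definiteness on $\Sigma$ controls stability. Once that geometric setup is in place, the rest of the argument is just the elementary sign check above.
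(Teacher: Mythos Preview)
Your proposal is correct and matches the paper's approach: the paper itself offers no proof beyond displaying the three eigenvalues $H_1,H_2,H_3$ and stating the result, so the intended argument is precisely the sign analysis you describe, with Proposition \ref{singularVamended} for the prolate stability conclusion and an odd-index spectral argument for the oblate instability. You are also right to flag that Proposition \ref{negativeeigenvalue} is formally stated only for the regular Arnold/Smale splitting, so its use in this singular setting relies on the analogous spectral fact carrying over to the form $(\ed^2V_{(\xi,0)}+\mathrm{corr}_{(\xi,0)})\restr{\Sigma}$ --- a point the paper leaves implicit as well.
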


\subsection{Isotropy subgroups}
\label{axi-isotropy}

As we did in Section \ref{asym-isotropy} we will use the different isotropy groups  to classify the different families of relative equilibria for the axisymmetric problem. Additionally, this can be used to discard the existence of some bifurcations based on symmetry considerations. This classification is shown in the following table

\vspace{4mm}
\renewcommand{\arraystretch}{1.5}
\begin{tabular}{| c| l | l |}
\hline
 \multirow{4}{2cm}{\centering cylindrical $\alpha\neq 0$} &  $\vecR=(0,R,0)$         &$\Gaxi_q=\{(A\det(A),A)\in \Gaxi \mid  A=\diag(\pm 1,1,\pm 1)\}\cong \Z_2\times \Z_2$ \\
             &  $\xi=(|\xi|,0,0)$         &$\Gaxi_\mu=\{(\exp(t\e_1),A)\in \Gaxi \mid t\in \R,\ \chi(A)=1\}\cong S^1\times S^1\times \Z_2$ \\
             & $\eta=-\alpha|\xi|$        &$\Gaxi_z=\{(A\det(A),A)\in \Gaxi \mid A=\diag(\pm 1,1,1)\} \cong \Z_2$ \\
             & $\mu=(\mu_1,0,0;\mu_4)$ & \\
\hline
\multirow{4}{2cm}{\centering cylindrical $\alpha= 0$}& $\vecR=(0,R,0)$ & $\Gaxi_q=\{(A\det(A),A)\in \Gaxi \mid  A=\diag(\pm 1,1,\pm 1)\}\cong \Z_2\times \Z_2$ \\
             & $\xi=(|\xi|,0,0)$ &$\Gaxi_\mu=\{(\exp(t\e_1),A)\in \Gaxi \mid t\in \R\}\cong S^1\times O(2)\times \Z_2$ \\
             & $\eta=0$ &$\Gaxi_z=\{(A\det(A),A)\in \Gaxi \mid A=\diag(\pm 1,1,1)\} \cong \Z_2$ \\
             & $\mu=(\mu_1,0,0;0)$    & \\
\hline
\multirow{4}{2cm}{\centering hyperbolic $\theta\neq 0,\pi$}   & $\vecR=(0,R,0)$ &$\Gaxi_q=\{(A\det(A),A)\in \Gaxi \mid  A=\diag(\pm 1,1,\pm 1)\}\cong \Z_2\times \Z_2$ \\
             & $\xi=(\xi_1,0,\xi_3)$ &$\Gaxi_\mu=\{(\exp(t\mu_{1,2,3}),A)\in \Gaxi \mid t\in \R,\ \chi(A)=1\}\cong S^1\times S^1\times \Z_2$ \\
             & $\eta\neq 0$ &$\Gaxi_z=\{(\mathrm{Id},\mathrm{Id})\}$ \\
             & $\mu=(\mu_1,0,\mu_3;\mu_4)$   & \\
\hline
\multirow{4}{2cm}{\centering hyperbolic $\theta=0$}  & $\vecR=(0,R,0)$ &$\Gaxi_q=\{(A\det(A),A)\in \Gaxi \mid  A=\diag(\pm 1,1,\pm 1)\}\cong \Z_2\times \Z_2$ \\
             &  $\xi=(0,0,|\xi|)$ &$\Gaxi_\mu=\{(\exp(t\e_3),A)\in \Gaxi \mid t\in \R\}\cong S^1\times O(2)\times \Z_2$ \\
             &  $\eta=0$ &$\Gaxi_z=\{(A\det(A),A)\in \Gaxi \mid A=\diag(1,1,\pm 1)\} \cong \Z_2$ \\
             &  $\mu=(0,0,\mu_3;0)$   & \\
\hline
\multirow{4}{*}{conical}      & $\vecR=(R_1,R_2,0)$ &$\Gaxi_q=\{(A\det(A),A)\in \Gaxi \mid  A=\diag(1,1,\pm 1)\}\cong \Z_2$ \\
             & $\xi=(\xi_1,\xi_2,0)$ &$\Gaxi_\mu=\{(\exp(t\mu_{1,2,3}),A)\in \Gaxi \mid t\in \R,\ \chi(A)=1\}\cong S^1\times S^1\times \Z_2$ \\
             & $\eta\neq 0$ &$\Gaxi_z=\{(\mathrm{Id},\mathrm{Id})\}$ \\
             & $\mu=(\mu_1,\mu_2,0;\mu_4)$   & \\
\hline
\multirow{4}{*}{isolated}     & $\vecR=(R,0,0)$ &$\Gaxi_q=\{(A\det(A),A)\in \Gaxi \mid  A\e_1=\e_1\}\cong O(2)$ \\
             & $\xi=(0,|\xi|,0)$ &$\Gaxi_\mu=\{(\exp(t\e_2),A)\in \Gaxi \mid t\in \R\}\cong S^1\times O(2)\times \Z_2$ \\
             & $\eta=0$ &$\Gaxi_z=\{(A\det(A),A)\in \Gaxi \mid A=\diag(1,\pm 1, 1)\} \cong \Z_2$ \\
             & $\mu=(0,\mu_2,0;0)$   & \\
\hline
\end{tabular}
\renewcommand{\arraystretch}{1}

Regarding $\Ar$ and $\Sm$, it is the case that  the diagonal structures found in the cylindrical, hyperbolic and conical case are again due to the fact that the different bases chosen had good isotypic properties. In fact, in the conical equilibrium we cannot, based on symmetry arguments,  choose a  basis in which $\Sm$ is  diagonal because $G_z^\text{axi}$ acts trivially on $\Vrig$ and therefore there is only one isotypic block which is the whole space.

\subsection{Bifurcations}
\label{sec:axi-bifurcations}

Before we study the bifurcations of the axisymmetric equilibria we must remark that the parametrization of hyperbolic relative equilibria \eqref{hyperbolicpoint} in terms of $R$ and $\theta$ is not one-to-one in the sense defined in the Appendix, since if we consider the point
\begin{equation*}\xi=|\xi|\begin{bmatrix}\sin\theta \\0 \\ \cos\theta \end{bmatrix},\quad \mathbf{R}=\begin{bmatrix}0 \\R \\ 0 \end{bmatrix},\quad\eta=-\frac{I_2-I_1}{I_1}|\xi|\sin\theta \end{equation*}
and we translate it using $(\diag(-1,-1,1),\diag(1,1,-1))\in \Gaxi$ we get
\begin{equation*}\xi=|\xi|\begin{bmatrix}-\sin\theta \\0 \\ \cos\theta \end{bmatrix},\quad \mathbf{R}=\begin{bmatrix}0 \\R \\ 0 \end{bmatrix},\quad\eta=\frac{I_2-I_1}{I_1}|\xi|\sin\theta \end{equation*}
therefore $(R,\theta)$ and $(R,-\theta)$ parametrize the same relative equilibrium up to a $\Gaxi$-symmetry. Similarly the transformation $(\diag(1,-1,-1),\diag(-1,1,1))\in \Gaxi$ is equivalent to the map $(R,\theta)\mapsto (R,\pi-\theta)$.
It can be checked that if we restrict $\theta\in[0,\frac{\pi}{2})$ then  relative equilibria with different $\theta$ are not $\Gaxi$-related.
The same problem happens along the conical family: In order to have injectivity we must restrict $\psi\in (0,\frac{\pi}{2})$. Nevertheless as we will see it is clearer to study the bifurcation problem ignoring this injectivity issue and considering the set of hyperbolic equilibria as a set parametrized by $\theta\in S^1$ and the orbital radius $R$, and the set of conical equilibria as a set parametrized by $\psi \in S^1$ with $\psi\neq 0,\pm \frac{\pi}{2},\pi$ and the orbital radius $R$.

Although  the defining equations  \eqref{eq:eta} and \eqref{eq:lambda} for a conical equilibrium with a given $\psi$ are not defined for $\psi=0$ we can consider the limit of the family of conical equilibria with fixed $R$ as $\psi\to 0$. This limit  converges to the point $(\text{Id},\vecR, (\xi, \eta))$ with
\begin{equation}\vecR = (R,0,0),\quad \xi=(0,|\xi|,0),\quad \eta=0 \label{conic0}\end{equation}
where $|\xi|^2=\frac{2R^3+3-9I_1}{2R^5}$. Note that this point  lies in what we have called the isolated family \eqref{isolatedpoint}.
Analogously the limiting point  $\psi\to\pi$ corresponds to $\vecR = (R,0,0),\quad \xi=(0,-|\xi|,0),\quad \eta=0$ a relative equilibrium related to \eqref{conic0}
by the symmetry $(\diag(1,-1,-1),\diag(1,-1,-1))\in \Gaxi$.

In a similar way it can be shown that the limit when $\psi\to\frac{\pi}{2}$ is
\begin{equation}\vecR = (0,R,0),\quad \xi=(|\xi|,0,0),\quad \eta=-\alpha_{\text{conic}}|\xi| \label{conicpi2}\end{equation}
where $\alpha_{\text{conic}}=\frac{-(I_1-I_2)(8R^2+9I_1-9I_2)}{I_1(2R^2+9I_1-9I_2)}$. Note that this point is a cylindrical equilibrium with spinning rate $\alpha=\alpha_{\text{conic}}$. The point  $\psi\to-\frac{\pi}{2}$ is related to $\psi\to\frac{\pi}{2}$ using the element  $(\diag(-1,-1,1),\diag(1,1,-1))\in \Gaxi$.

To start the bifurcation analysis for axisymmetric relative equilibria we will study what happens to the cylindrical family when $R$ is large enough and we vary the spinning quotient $\alpha$ (see \eqref{cylindricalpoint}). As we previously, we need to find the points for which the Arnold  or the Smale forms become degenerate, since these are necessary conditions for the existence of bifurcations. We will study separately what happens in the oblate and prolate cases.

\paragraph{Oblate.} In this case we will fix the orbital radius $R=|\vecR|$ to a large enough value and  will study the existence of the  different kinds of relative equilibria. The different transitions have been sketched in Figure \ref{oblatetransf} where a thick solid line means $\Gaxi_\mu$-stability, a dotted line means instability and a dashed line means that the methods employed give inconclusive results. In that figure each line represents a connected component of each family of relative equilibria in $T^*Q$ with the orbital radius $R=|\vecR|$ fixed to a large enough value.

Since in the oblate case $I_1>I_2$ then for any positive $\alpha$ the cylindrical equilibrium is $\Gaxi_\mu$-stable according to Proposition \ref{cylstab}. Note that if $\alpha$ is decreased we eventually arrive to the point $P_9$ where $\alpha=0$ and according to \eqref{cylindricalpoint} the spinning velocity $\eta$ vanishes but the cylindrical equilibrium is still $\Gaxi_\mu$-stable.
If $\alpha$ is  further decreased we will reach the point $P_1$ where $I_1(1+\alpha)-I_2=0$. At this point $A_1$ (see \eqref{arnold-cyl}) becomes negative and therefore the cylindrical family becomes unstable because there is only one negative eigenvalue of $\Ar$ and $\Sm$ (Proposition \ref{negativeeigenvalue}).

The cylindrical equilibrium will remain unstable until we arrive to $P_5$ where $S_1=0$ ($I_1(4+\alpha)-4I_2\approx 0$).
 After this point two eigenvalues are negative and therefore we cannot say anything about its stability without recurring to a numerical analysis of the linearized system, and so  the inconclusive region starts at that point.

Note that according to Remark \ref{rem:int-cyl-hyp} the point $P_1$, for which $I_1(1+\alpha)-I_2=0$, is a relative equilibrium belonging  to the cylindrical family, and corresponding to a limit point of the hyperbolic family when $\theta\to\frac{\pi}{2}$.

Recall that using Proposition \ref{hypstab}
 the hyperbolic family is stable. Therefore we can interpret this behavior saying that at $P_1$ the $\Gaxi_\mu$-stability has been transferred from the cylindrical family to the hyperbolic family.

The other bifurcation candidate along the cylindrical family is $P_5$, where $S_1$ changes sign. This happens exactly when \[\alpha=\frac{-(I_1-I_2)(8R^2+9I_1-9I_2)}{I_1(2R^2+9I_1-9I_2)}\approx 4\frac{I_2-I_1}{I_1},\] but this is limit point of the conical family when $\psi \to \pm \frac{\pi}{2}$ (see \eqref{conicpi2}).

Now we  consider what happens if we move along the conical family changing the value of $\psi$: For $\psi \to \frac{\pi}{2}$ the limiting equilibrium point belongs to the cylindrical family. As $\psi$ becomes smaller, the body starts to orbit in a higher plane, and the plane of the orbit now does not contain the center of attraction (see \eqref{offset}). At the same time  the spinning velocity $\eta$ increases.

 The offset $\varkappa$ (see \eqref{defoffset}) achieves a maximum for $\psi=\frac{\pi}{4}$, and passed that value  the orbit plane starts to descend until $\psi\to 0$ at the point $P_6$ which belongs to the isolated family  \eqref{isolatedpoint} and for which the offset $\varkappa=0$ vanishes again. If $\psi$ is  further decreased, the orbit plane lowers down and the body describes again a cone.
For $\psi\to-\frac{\pi}{2}$  (point $P_7$) the we have again a cylindrical equilibrium,  and this point is the image of $P_5$ by translating with a $\Gaxi$-symmetry.

In general the stability along the conical and isolated equilibria will depend on the sign of $4I_2-3I_1$.
 If this quantity is positive in both cases we will have only one negative eigenvalue and using Proposition \ref{negativeeigenvalue} we can conclude instability. But if $4I_2-3I_1<0$
 we will have two negative eigenvalues in both cases and the method will be inconclusive.

Finally, if we move along the hyperbolic equilibria, it follows from Proposition \ref{hypstab}  that the family is $\Gaxi_\mu$-stable and the only bifurcation candidates are $\theta\to\pm\frac{\pi}{2}$ where the bifurcation to hyperbolic and cylindrical families occur. Note that according to \eqref{hyperbolicpoint} when $\theta=0$ the spinning velocity $\eta$ vanishes, corresponding to the point $P_2$.

The points $P_3,P_4,P_7,P_8$ and $P_{10}$ are $\Gaxi$-related  to $P_3,P_2,P_5,P_6$ and $P_9$ respectively, and therefore they have the same dynamical properties as their $\Gaxi$-counterparts.

\begin{figure}[h]
\begin{center}
\vspace{10mm}
\includegraphics[width=10cm]{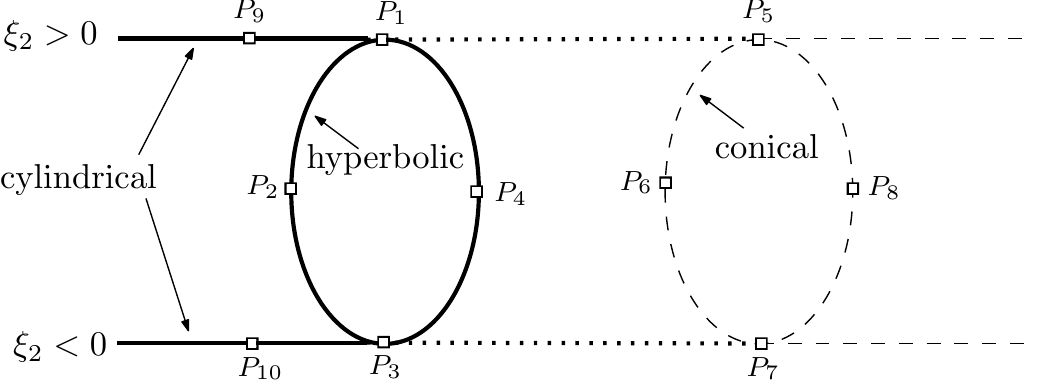}
\caption{Bifurcation diagram for an oblate body.\label{oblatetransf}}
\end{center}
\end{figure}

\paragraph{Prolate.} If the body is prolate ($I_1<I_2$) the bifurcation diagram is very similar as the previous one.  The hyperbolic and the conical family bifurcate from the cylindrical family as $\alpha$ is varied,
but the difference is that now the bifurcation point with largest $\alpha$ is $P_5$ and not  $P_1$. As both bifurcations happen at positive $\alpha$
 the non-spinning point $P_9$ lies after the hyperbolic bifurcation.

In this prolate case,  $\Gaxi_\mu$-stability is transferred from the cylindrical  to the conical family. The hyperbolic equilibria are now unstable
and the bifurcation diagram is shown in Figure \ref{prolatetransf} where we have used  the same notation as for the oblate case.

\begin{figure}[h]
\begin{center}
\vspace{10mm}
\includegraphics[width=10cm]{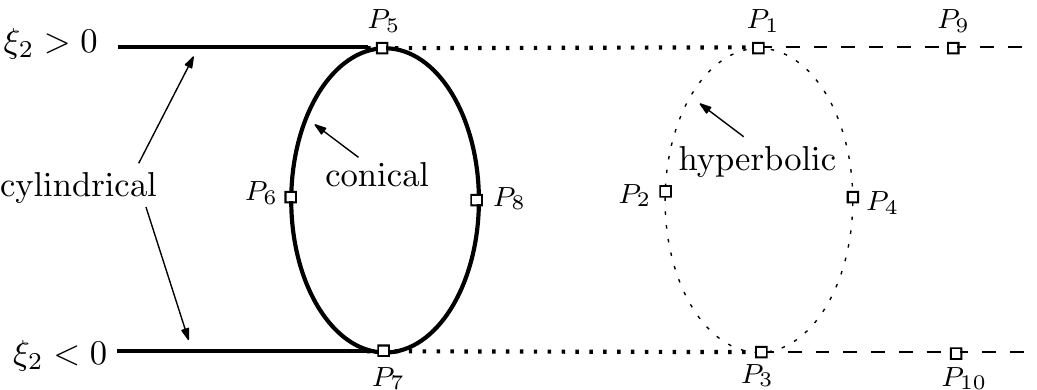}
\caption{Bifurcation diagram for a prolate body.\label{prolatetransf}}
\end{center}
\end{figure}

\appendix
\addappheadtotoc

\section{Hamiltonian Relative Equilibria}

Let $(Q,\lmet\cdot,\cdot\rmet)$ be a Riemannian manifold (the
configuration manifold), $G$ a compact Lie group that acts by isometries on
$Q$ (the symmetry group) and $V\in C^\infty (Q)$ a $G$-invariant
function (the potential energy). With these data we can construct
a symmetric Hamiltonian system on $T^*Q$, equipped with its canonical symplectic form $\omega=-\ed \theta$, in the following way: The
potential energy $V$ can be lifted to $T^*Q$,  and we will denote
this lifted function also by $V$. The Riemannian metric on $Q$
induces an inner product on each cotangent fiber $T^*_qQ$, $q\in Q$.
Then the Hamiltonian is defined as
\begin{equation*}h(p_q)=\frac 12 \|p_q\|^2+V(q),\quad p_q\in T_q^*M.\end{equation*} The
$G$-action on $Q$ induces a cotangent-lifted Hamiltonian action on
$T^*Q$ with associated equivariant momentum map $\J:T^*M\rightarrow
\g^*$ defined by
\begin{equation*}\langle \J(p_q),\xi\rangle=\langle
p_q,\xi_Q(q)\rangle\quad\forall\,\xi\in\g\end{equation*}
where $\xi_Q$
is the fundamental vector field on $Q$ associated to the generator
$\xi\in\g$, and is defined by
$$\xi_Q(q)=\frac{d}{dt}\restr{t=0}e^{t\xi}\cdot q.$$
Analogously $\xi_{T^*Q}$ is the fundamental vector field of the cotangent lifted action on $T^*Q$.
This momentum map is $\text{Ad}^*$-equivariant in the sense that
$\J(g\cdot p_q)=\Ad_{g^{-1}}^*(\J(p_q))$ for every $p_q\in T_q^*M$,
$g\in G$.

 The Hamiltonian $h$ is $G$-invariant for this lifted action (this
follows from the invariance of the metric and of $V$). Therefore,
due to Noether's theorem, $\J$ is conserved for the Hamiltonian dynamics associated to $h$. The
quadruple $(Q,\lmet\cdot,\cdot\rmet,G,V)$ is called a \emph{symmetric
simple mechanical system}.

Let $(Q,\lmet\cdot,\cdot\rmet,G,V)$ be a symmetric simple mechanical system. A \emph{relative equilibrium} is a point in phase space
$p_q\in T^*Q$ such that its dynamical evolution lies inside a group
orbit for the cotangent-lifted action. This amounts to the existence
of a generator $\xi\in\g$ such that the evolution of $p_q$ is given
by $t\mapsto e^{t\xi}\cdot p_q$. The element $\xi$ is called the
\emph{angular velocity} of the relative equilibrium.
A useful characterization of relative equilibria in simple
mechanical systems is given by the following result.
\begin{thm}[Marsden \cite{marsden1992lectures}]  \label{REthm}A point  $p_q\in T^*Q$ of a symmetric simple mechanical system $(Q,\lmet\cdot,\cdot\rmet,G,V)$ is a relative equilibrium  with  velocity $\xi\in\g$
if and only if the following conditions are verified
\begin{enumerate}
\item $p_q = \lmet \xi_Q(q) , \cdot\rmet$.
\item  $q$ is a critical point of $V_\xi(q)=V(q)-\frac 12\langle\xi,\II(q)\xi\rangle$,
\end{enumerate}
where $\II:Q\times\g\rightarrow\g^*$ is defined by
$\langle\xi,\II(q)\eta\rangle=\lmet\xi_Q(q),\eta_Q(q)\rmet$.
Moreover, the momentum $\mu=\J(p_q)\in\g^*$ of the relative
equilibrium is given by $\mu = \II(q)
\xi. $
\end{thm}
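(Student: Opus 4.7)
The plan is to recast the relative equilibrium condition as the criticality of the augmented Hamiltonian $h_\xi := h - J^\xi$ on $T^*Q$, where $J^\xi(p_q) := \langle \J(p_q),\xi\rangle$, and then analyse that equation by splitting $T_{p_q}T^*Q$ into its vertical (fiber) and horizontal (base) parts. Saying that the evolution of $p_q$ is $t\mapsto e^{t\xi}\cdot p_q$ is precisely the statement that $X_h(p_q) = \xi_{T^*Q}(p_q)$. By the general theory of cotangent-lifted actions, the Hamiltonian vector field of $J^\xi$ is exactly the fundamental vector field $\xi_{T^*Q}$, so the relative equilibrium condition is equivalent to $\ed h_\xi(p_q) = 0$.

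The first step is the fiber derivative. Using the Riemannian identification $\sharp:T_q^*Q\to T_qQ$ induced by $\lmet\cdot,\cdot\rmet$, the derivative of $h_\xi$ restricted to the fiber $T_q^*Q$ reads $\delta p\mapsto \lmet p_q^\sharp - \xi_Q(q),\, \delta p^\sharp\rmet$. Its vanishing for every $\delta p$ forces $p_q^\sharp = \xi_Q(q)$, which is exactly condition (1). Substituting into the definition of $\J$ then yields the moment formula at once, since $\langle\mu,\eta\rangle = \langle p_q,\eta_Q(q)\rangle = \lmet\xi_Q(q),\eta_Q(q)\rmet = \langle\xi,\II(q)\eta\rangle$ for every $\eta\in\g$, giving $\mu = \II(q)\xi$.

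For the base derivative I would work in local coordinates $(q^i,p_i)$ in which $h_\xi(q,p) = \tfrac12 g^{ij}(q)p_ip_j + V(q) - p_i\xi_Q^i(q)$. Computing $\partial_k h_\xi$ on the locus defined by (1), where $p_i = g_{ij}\xi_Q^j$, and using $\partial_k g^{ij} = -g^{ia}g^{jb}\partial_k g_{ab}$, one obtains $\partial_k h_\xi = \partial_k V - \tfrac12\partial_k g_{ij}\,\xi_Q^i\xi_Q^j - g_{ij}\xi_Q^j\,\partial_k \xi_Q^i$. Since $V_\xi(q) = V(q) - \tfrac12\langle\xi,\II(q)\xi\rangle = V(q) - \tfrac12 g_{ij}(q)\xi_Q^i\xi_Q^j$, this is precisely $\partial_k V_\xi(q)$, so once (1) is imposed the horizontal vanishing of $\ed h_\xi$ reduces exactly to (2). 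The only subtlety is that the splitting of $T_{p_q}T^*Q$ into fiber and base parts is not canonical---I would invoke either local coordinates or a linear connection to make the base derivative intrinsic---but the cancellation between the quadratic $\tfrac12 g^{ij}p_ip_j$ term and the $p_i\xi_Q^i$ term under the substitution $p_q = \lmet\xi_Q(q),\cdot\rmet$ is the only real technical content, and it is exactly what aligns $\ed h_\xi$ with $\ed V_\xi$.
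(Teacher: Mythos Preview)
Your proof is correct and follows the standard argument: characterize relative equilibria as critical points of the augmented Hamiltonian $h_\xi=h-J^\xi$, then split $\ed h_\xi(p_q)=0$ into its fiber and base components to obtain conditions (1) and (2) respectively, with the momentum formula falling out of (1) by direct substitution. The local-coordinate computation of the base derivative is clean and the cancellation you identify is exactly the point.

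There is, however, nothing to compare against: the paper does not supply its own proof of this theorem. It is stated in the Appendix with attribution to Marsden's \emph{Lectures on Mechanics} and then used as a tool throughout the paper. Your argument is essentially the one found in that reference, so in effect you have reproduced the proof the paper defers to.
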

That is, any relative equilibrium
is determined by a configuration-velocity pair $(q,\xi)\in
Q\times\g$ satisfying $\ed V_\xi(q)=0$. The map $\II$ is called the
\emph{locked inertia tensor}, while the function $V_\xi$ is called
the \emph{augmented potential}.

Note that if $(q,\xi)$ is a relative equilibrium pair, by $G$-invariance of the Hamiltonian vector field $t\mapsto g\cdot (e^{t\xi}\cdot p_q)=e^{t\Ad _g\xi}\cdot(g\cdot p_q)$ is also an integral curve, that is, for each $g\in G$ if $(q,\xi)$ is a relative equilibrium pair then $(g\cdot q,\Ad_g\xi)$ is also a relative equilibrium pair.

\paragraph{The Reduced Energy Momentum Method.}\label{sec reducedEM}

The  generally adopted notion of stability for relative equilibria
of symmetric Hamiltonian systems  is that of $G_\mu$-stability,
introduced in \cite{patrick1992stability} and that we now review in the
context of symmetric simple mechanical systems. This notion  is
closely related to the one of Liapunov stability of the induced Hamiltonian
system on the reduced phase space.
\begin{defn}\label{patrick} Let $(Q,\lmet\cdot,\cdot\rmet,G,V)$ be a symmetric simple mechanical system and $p_q\in T^*Q$ a
relative equilibrium  with momentum value $\mu=\J(p_q)$. Let $G_\mu$ be the stabilizer of $\mu\in \g^*$ for the coadjoint action of $G$ in $\g^*$. We say that
$p_q$ is $G_\mu$-stable if for every $G_\mu$-invariant neighborhood
$U\subset T^*Q$ of the orbit $G_\mu\cdot p_q$ there exists a
neighborhood $O$ of $p_q$ such that the Hamiltonian evolution of $O$
lies in $U$ for all time.
\end{defn}

In this section we outline the implementation of the Reduced Energy Momentum method following \cite{simo1991stability}. This method is designed for simple mechanical systems since it incorporates all
of their distinguishing  characteristics with respect to general
Hamiltonian systems. Relative equilibria pairs $(q,\xi)$, besides being critical points of the augmented potential can also be described as relative equilibria position-momentum pairs $(q,\mu)\in Q\times \g^*$ where $\mu=\II(q)\xi$. This pairs $(q,\mu)$ are precisely the critical points of the \emph{amended potential}

$$V_\mu(q)=V(q)+\frac{1}{2}\langle\mu,\II(q)^{-1}\mu\rangle$$

The study of the augmented potential $V_\xi$ can only give rough stability conditions, and a better function to study in order to determine the stability of a relative equilibrium  is the amended potential $V_\mu$. From now on we will need to assume that the isotropy, or stabilizer,  $G_q$ of the base point of the relative equilibrium is discrete.

\begin{prop}(\cite{patrick1992stability})
Given a relative equilibrium $p_q$ with momentum $\mu$ and $G_q$ discrete, definiteness of the bilinear form $\ed ^2 V_\mu(q)\rrestr{\mathcal{V}}$ where $\mathcal V=\{v\in T_qQ\ |\ \lmet v,\eta_Q\rmet=0 \quad \forall \eta \in \g_\mu \}$ implies $G_\mu$-stability. \label{amendedtest}
\end{prop}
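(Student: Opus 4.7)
The plan is to deduce the result from the classical identity relating the Hamiltonian restricted to a momentum level set and the amended potential, combined with a Liapunov argument directly on the level set.

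First I would establish the key identity. Since $G_q$ is discrete, the locked inertia tensor $\II(q)$ is invertible, so $\xi:=\II(q)^{-1}\mu\in\g$ is well defined. For any $p_q\in\J^{-1}(\mu)$ write $p_q=\lmet\xi_Q(q),\cdot\rmet+\alpha$, where $\alpha\in T_q^*Q$ annihilates every $\eta_Q(q)$ with $\eta\in\g$; this decomposition is unique and consistent because $\lmet\xi_Q(q),\cdot\rmet$ already has momentum $\mu$ by Theorem \ref{REthm}, so $\alpha$ has zero momentum. A short computation using $\lmet\xi_Q(q),\xi_Q(q)\rmet=\langle\mu,\II(q)^{-1}\mu\rangle$ and the orthogonality of $\alpha$ and $\xi_Q(q)$ then yields
\begin{equation*}
h(p_q) \;=\; \tfrac{1}{2}\|\alpha\|^2 + V_\mu(q), \qquad p_q\in\J^{-1}(\mu).
\end{equation*}
Thus on $\J^{-1}(\mu)$ the Hamiltonian splits as a non-negative ``fibre'' term plus the amended potential, and by $G$-invariance of both summands the right-hand side defines a $G_\mu$-invariant function $F$ on the level set.

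Next I would interpret $\mathcal V$ geometrically. By definition $\mathcal V$ is the metric-orthogonal complement of the $G_\mu$-orbit tangent space $\{\eta_Q(q):\eta\in\g_\mu\}$ at $q$. Because $G$ is compact and $G_q$ is discrete, $G_q$ is finite, and a slice theorem provides a $G_\mu$-equivariant tubular neighbourhood of $G_\mu\cdot q$ in $Q$ modelled on $\mathcal V$. Consequently the descent of the $G$-invariant function $V_\mu$ to a neighbourhood of $[q]$ in the $G_\mu$-quotient of $Q$ is faithfully captured by the Hessian $\ed^2 V_\mu(q)\rrestr{\mathcal V}$.

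Combining the two steps, if $\ed^2 V_\mu(q)\rrestr{\mathcal V}$ is definite then $F$ has a strict local extremum at the orbit $G_\mu\cdot p_q$: the base directions are controlled by the Hessian of $V_\mu$ via the slice, while the fibre direction $\alpha$ is controlled by the positive definite term $\tfrac12\|\alpha\|^2$. Since $F=h|_{\J^{-1}(\mu)}$ is preserved by the Hamiltonian flow and $\J^{-1}(\mu)$ is flow-invariant by Noether's theorem, $F$ is a $G_\mu$-invariant Liapunov function for the dynamics transverse to $G_\mu\cdot p_q$; the standard Liapunov argument then delivers $G_\mu$-stability in the sense of Definition \ref{patrick} (the negative definite case is handled by replacing $F$ with $-F$ plus a constant).

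The main obstacle lies in the descent step: although $G_q$ is discrete, $G_\mu$ may have higher isotropy elsewhere in the level set, so $\J^{-1}(\mu)/G_\mu$ is generically singular and one cannot simply invoke smooth symplectic reduction. The trick is to work directly in the ambient level set with the $G_\mu$-invariant Liapunov function $F$, exploiting the strict positivity of $\tfrac12\|\alpha\|^2$ to pin down the momentum-dual directions without ever needing a smooth reduced phase space.
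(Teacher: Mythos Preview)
The paper does not supply its own proof of this proposition; it is simply quoted from \cite{patrick1992stability}. Your outline follows the classical route underlying that reference and \cite{simo1991stability,marsden1992lectures}: the kinetic--amended splitting $h\big|_{\J^{-1}(\mu)}=\tfrac12\|\alpha\|^2+V_\mu$ together with a $G_\mu$-invariant Liapunov argument on the level set. For the positive definite case this is the standard argument and is essentially correct, including your observation that one should avoid passing to a possibly singular quotient and instead work with the invariant function directly on $\J^{-1}(\mu)$.

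There is, however, a genuine gap in your treatment of the negative definite case. Your suggestion to ``replace $F$ with $-F$'' does not produce a Liapunov function: the fibre term $\tfrac12\|\alpha\|^2$ is always non-negative and has a strict minimum at $\alpha=0$, so if $\ed^2V_\mu(q)\rrestr{\mathcal V}$ is negative definite then $F=\tfrac12\|\alpha\|^2+V_\mu$ is a \emph{saddle} along the orbit (minimum in the $\alpha$-directions, maximum in the $\mathcal V$-directions), and so is $-F$. No sign change fixes this. In fact the amended-potential criterion, as used in the simple-mechanical setting here and in Proposition~\ref{reducedEMsublk} for the Smale block, is a \emph{positive} definiteness test; the word ``definiteness'' in the statement should be read that way. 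The case where a sign-indefinite conclusion is still available (definiteness of $\Ar$ alone when $\dim G=\dim Q$) is a separate phenomenon, not covered by your $-F$ trick. You should restrict your argument to the positive definite case, which is what the rest of the paper actually uses.
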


An additional benefit of the Reduced Energy Momentum method is to further exploit the symmetry properties of $V_\mu$ and decompose the admissible variation space $\mathcal V$ into two subspaces \begin{equation}\mathcal V= \Vrig \oplus\Vint\label{splittingREM}\end{equation} in such a way that $\ed^2V_\mu$ block-diagonalizes.

Let $\g_\mu^\perp\subset \g$ be the orthogonal complement of $\g_\mu$ with respect to $\II(q)$. We define
\begin{align}
\Vrig&=\{\eta_Q(q)\in T_{q}Q \mid \eta \in \g_\mu^\perp\}\subset \mathcal V \nonumber\\
\Vint&=\{v \in \mathcal V \mid \eta \cdot \langle\ed (\mathbb I\xi),v\rangle=0 \quad \forall\eta \in \g_\mu^\perp\}=\{v \in \mathcal V \mid \mathbb I(q)^{-1} \langle\ed (\mathbb I\xi),v\rangle \in \g_\mu\} \label{Vint}
\end{align}
(an interpretation of these spaces can be found in \cite{marsden1992lectures}). The \emph{Arnold form} $\Ar:\g_\mu^\perp \times \g_\mu^\perp\to \R$ is the bilinear form defined as the restriction of the second variation $\ed^2 V_\mu$ to $\Vrig\cong \g_\mu^\perp$. It can be shown that the Arnold form is independent of the potential and can be written as
\begin{equation}\label{arnold}\Ar(\eta,\nu):=\ed^2V_\mu(q)(\eta_Q(q),\nu_Q(q))=\langle\text{ad}^*_\eta \mu, \II(q)^{-1}\text{ad}^*_\nu \mu\rangle+\langle\text{ad}^*_\eta\mu,\text{ad}_\nu(\II(q)^{-1}\mu)\rangle.\end{equation}

If the Arnold form is non-degenerate it can be shown that $\mathcal V=\Vrig\oplus \Vint$. In that case sufficient conditions for stability are given by the following result.

\begin{prop}(Reduced Energy Momentum method \cite{simo1991stability}) Given a relative equilibrium $p_q$ with $G_q$ discrete then
\begin{itemize}
 \item If $\dim G < \dim Q$ then positive definiteness of both $\Sm:=\ed^2 V_\mu \restr{\Vint}$ and  $\Ar$  implies $G_\mu$-stability of the relative equilibrium.
 \item If $\dim G=\dim Q$ then definiteness (positive or negative) of  $\Ar$  implies $G_\mu$-stability of the relative equilibrium.
\end{itemize}

\label{reducedEMsublk}
\end{prop}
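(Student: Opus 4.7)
The strategy is to invoke Patrick's criterion (Proposition~\ref{amendedtest}), which asserts that definiteness of the Hessian $\ed^2 V_\mu(q)\rrestr{\mathcal V}$ implies $G_\mu$-stability of the relative equilibrium. Accordingly, my goal reduces to upgrading positive definiteness of the diagonal blocks $\Ar$ and $\Sm$ to positive definiteness of $\ed^2 V_\mu(q)$ on all of $\mathcal V$.

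The heart of the proof is the block-diagonalization of this Hessian with respect to the splitting \eqref{splittingREM}. First, I would verify that $\mathcal V = \Vrig \oplus \Vint$ is a genuine direct sum: if $\eta_Q(q) \in \Vrig \cap \Vint$ with $\eta \in \g_\mu^\perp$, then the defining condition $\II(q)^{-1}\langle \ed(\II\xi), \eta_Q(q)\rangle \in \g_\mu$, combined with the $\II$-orthogonality of $\g_\mu$ and $\g_\mu^\perp$, identifies the pairing $\langle\ed(\II\xi),\eta_Q(q)\rangle(\nu)$ for $\nu\in\g_\mu^\perp$ with $\Ar(\eta,\nu)$, so nondegeneracy of $\Ar$ forces $\eta = 0$; a dimension count using that $\Vint$ is cut out by a linear map valued in $\g/\g_\mu$ then closes the decomposition. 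The principal technical step, which I expect to be the main obstacle, is to show that $\ed^2 V_\mu(q)(\eta_Q(q), v) = 0$ for $\eta \in \g_\mu^\perp$ and $v \in \Vint$. Following the computation of \cite{simo1991stability}, I would differentiate the augmented-potential identity along a curve tangent to $v$, use $V_\mu(q) = V(q) + \tfrac12\langle\mu,\II(q)^{-1}\mu\rangle$ to express the mixed second variation in terms of $\langle\ed(\II\xi), v\rangle$, and then invoke the defining property of $\Vint$ to place $\II(q)^{-1}\langle\ed(\II\xi), v\rangle$ in $\g_\mu$, whose $\II$-pairing with $\eta \in \g_\mu^\perp$ vanishes. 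Once the cross-terms vanish, positive definiteness of $\Ar$ on $\Vrig$ and $\Sm$ on $\Vint$ gives positive definiteness of $\ed^2 V_\mu(q)\rrestr{\mathcal V}$, and Proposition~\ref{amendedtest} closes the case $\dim G < \dim Q$.

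The case $\dim G = \dim Q$ degenerates as follows. Since $G_q$ is discrete the orbit $G\cdot q$ is open in $Q$, so every tangent vector at $q$ is fundamental, whence $\mathcal V = \Vrig$ and $\Vint = \{0\}$. In this situation $\ed^2 V_\mu(q)\rrestr{\mathcal V}$ coincides with $\Ar$ as a bilinear form, so definiteness (positive or negative) of $\Ar$ is exactly definiteness of the full restricted Hessian, and Patrick's criterion again delivers $G_\mu$-stability.
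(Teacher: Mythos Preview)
The paper does not supply its own proof of this proposition: it is stated with attribution to \cite{simo1991stability} and used as a black box throughout. So there is no ``paper's proof'' to compare against; you are reconstructing the argument from the cited source.

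Your sketch is the standard one and is correct in outline. The reduction to Proposition~\ref{amendedtest} is exactly right, and the block-diagonalization of $\ed^2 V_\mu(q)$ with respect to $\Vrig\oplus\Vint$ is indeed the content of the method in \cite{simo1991stability}. Your handling of the degenerate case $\dim G=\dim Q$ is also correct: discreteness of $G_q$ forces $T_qQ=\g\cdot q$, so $\mathcal V$ collapses to $\Vrig$ and $\Vint=\{0\}$. One small point worth tightening: in your direct-sum argument you invoke nondegeneracy of $\Ar$, which is not part of the hypotheses per se but follows from the assumed positive definiteness; the paper notes just before the proposition that nondegeneracy of $\Ar$ is what guarantees $\mathcal V=\Vrig\oplus\Vint$, so you should make explicit that positive definiteness supplies this. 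Otherwise the proposal is sound and matches the approach of the original reference.
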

The bilinear form $\Sm:\Vint\times \Vint \to \R$ is known in the literature as the \emph{Smale form}.
The Reduced Energy Momentum method can also be used to study the linearized dynamics near a relative equilibrium therefore offering also instability conditions. Our main instability result will be the following.

\begin{prop} Consider a relative equilibrium $p_q$. If the total number of negative eigenvalues of $\Sm$ and $\Ar$ is odd then the relative equilibrium is \emph{unstable}.
\label{negativeeigenvalue}
\end{prop}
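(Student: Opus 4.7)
The plan is to pass from the second variation of the amended potential to a quadratic Hamiltonian on a symplectic slice transverse to the group orbit, and then apply a parity/signature obstruction to linear Liapunov stability of such systems due to Williamson.

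First I would construct a symplectic slice $\mathcal N\subset T_{p_q}(T^*Q)$ at the relative equilibrium, using a symplectic tube around $G\cdot p_q$ as in the Marsden--Weinstein--Ratiu reduction framework. On $\mathcal N$ the reduced Hamiltonian has $p_q$ as a critical point, and one verifies that its Hessian takes a block form whose configuration block, under the natural identification, agrees with $\ed^2 V_\mu\restr{\mathcal V}$, while the momentum block is the positive-definite cometric induced by the kinetic energy. Since the kinetic block contributes no negative eigenvalues, the Morse index of the full Hessian on $\mathcal N$ equals that of $\ed^2 V_\mu\restr{\mathcal V}$, and the splitting $\mathcal V=\Vrig\oplus\Vint$ by construction block-diagonalizes $\ed^2 V_\mu$, so this Morse index equals $n(\Ar)+n(\Sm)$, where $n(\cdot)$ denotes the number of negative eigenvalues.

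Next I would invoke Williamson's normal form for a quadratic Hamiltonian $H=\tfrac12\langle Qx,x\rangle$ on a symplectic vector space: if the linear flow $\dot x=JQx$ is Liapunov stable, then $Q$ is nondegenerate and symplectically conjugate to a direct sum of two-dimensional elliptic blocks, each of signature $(2,0)$ or $(0,2)$. In particular $n(Q)$ must be even. Contrapositively, when $n(\Ar)+n(\Sm)$ is odd, the linearized reduced system cannot be Liapunov stable, so its spectrum must contain an eigenvalue with strictly positive real part (any real hyperbolic block or complex Williamson quadruple forces such an eigenvalue). A standard Chetaev-type argument in the reduced phase space then lifts this spectral instability to $G_\mu$-instability of $p_q$ in the sense of Definition \ref{patrick}.

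The main obstacle, where I expect to invest real effort, is the first step: setting up the symplectic slice and verifying that its Hessian, in appropriate coordinates, really decomposes as $\ed^2 V_\mu\restr{\mathcal V}$ plus a positive-definite kinetic block, with no spurious cross-terms arising from the connection used to build the slice, and with the off-diagonal entries coupling $\Vrig$ to $\Vint$ indeed vanishing. This identification is the heart of the Reduced Energy Momentum method and is carried out in \cite{simo1991stability}; once in place, the Williamson normal form and the Chetaev instability criterion are routine linear symplectic algebra, and the parity conclusion follows by simply counting the signature contributions of the elementary blocks.
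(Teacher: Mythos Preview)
Your plan is correct and follows essentially the same architecture as the paper's proof: pass to a symplectic slice, use the Reduced Energy Momentum block structure to identify the Morse index of the full reduced Hessian with $n(\Ar)+n(\Sm)$ (the kinetic block being positive definite), and then argue that an odd index is incompatible with linear stability of the infinitesimally symplectic linearization.

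The difference lies in how the last step is executed. The paper does not invoke Williamson's normal form; it argues directly from determinants. Since the symplectic form and the kinetic block have positive determinant, the sign of $\det$ of the linearized field equals the sign of $\det(\ed^2 V_\mu\restr{\mathcal V})$. Then one observes that the eigenvalues of an infinitesimally symplectic map come in sets $\{\lambda,\bar\lambda,-\lambda,-\bar\lambda\}$ whose product is positive unless $\lambda$ is real and nonzero, so a negative determinant forces a real eigenvalue and hence exponential instability. This is more elementary than the full Williamson classification and immediately yields a real unstable eigenvalue, whereas your route gives only ``not linearly Liapunov stable''. Your subsequent claim that any non-elliptic Williamson block forces an eigenvalue with strictly positive real part is not quite right: nilpotent blocks on the imaginary axis produce polynomial, not exponential, growth. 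The Chetaev step still closes the argument, but the paper's determinant computation is both shorter and sharper here.

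One small inaccuracy: Liapunov stability of $\dot x=JQx$ does not imply $Q$ is nondegenerate (take $Q=0$). In practice one is assuming $\Ar$ and $\Sm$ are nondegenerate at the equilibrium under study; with that in hand, odd $n(\Ar)+n(\Sm)$ gives $\det(\ed^2 V_\mu\restr{\mathcal V})<0$ directly, which is exactly the hypothesis the determinant argument needs.
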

\begin{proof}
 We will combine symplectic eigenvalue analysis and the Reduced Energy Momentum splitting. The linearized vector field with respect to a symplectic slice (see \cite{patrick1995relative}) is an infinitesimally symplectic transformation with eigenvalues coming in $n$-tuples. This implies that if $\lambda=a+bi$ is a simple eigenvalue of this linearized field then $\{\bar{\lambda},-\lambda,-\bar{\lambda}\}$ are the companion eigenvalues completing a quadruple. If $a\neq 0$ and $b\neq 0$ all the four eigenvalues are different and their product is $|\lambda|^4$. If $a=0$ then the eigenvalue pair is $\{\lambda,\bar{\lambda}\}$  and their product is $|\lambda|^2$ but if $b=0$ then the  eigenvalue pair is $\{\lambda,-\lambda\}$ and the product is $-|\lambda|^2$. Therefore if the linearized vector field has negative determinant then it must have a real eigenvalue implying instability. Additionally, it can be shown (see \cite{marsden1992lectures}) that the  splitting \eqref{splittingREM} block-diagonalizes the
symplectic form. Moreover the sign of
the determinant of the linearized field has to be equal to the sign of the determinant of $\ed^2 V_\mu\restr{\mathcal{V}}$. Thus if $\ed^2 V_\mu\restr{\mathcal{V}}$ has negative determinant then the relative equilibrium is unstable. But the Reduced Energy Momentum method block diagonalizes $\ed^2 V_\mu\restr{\mathcal{V}}$ into $\Sm$ and $\Ar$ and the hypothesis implies that the determinant of $\ed^2 V_\mu\restr{\mathcal{V}}$ being negative is equivalent to the total number of negative eigenvalues of $\Sm$ and $\Ar$ being odd.
\end{proof}

\paragraph{Bifurcations of relative equilibria.}

By a family of relative equilibria of a  symmetric simple mechanical system we will mean the image of a map $\psi:\R^k\to T^*Q$ such that for each $x\in \R^k$ the point $\psi(x)$ is a relative equilibria and if $x\neq y$ then $\psi(x)$ and $\psi(y)$ are not $G$-related.

Combining the Reduced Energy Momentum and the results of \cite{patrick1995relative} it can be shown that if $p_q\in T^*Q$ is a relative equilibrium with discrete stabilizer such that both the Arnold form and the Smale form are non-degenerate then there is a family of dimension $\dim G_\mu$ of relative equilibria containing $p_q$ and all the relative equilibria near $p_q$ lie in that family.

Two families $\psi_1,\psi_2$ are different if there are not $x,y$ such that $\psi_1(x)$ and $\psi_2(y)$ are $G$-related. The previous result implies that at a bifurcation point  the Arnold form or the Smale form must be degenerate. Therefore the points which are candidates for the existence of a bifurcation  are those points at which at least one of these forms becomes degenerate. However, in order to decide if actually a bifurcation happens at one of these points a local study around each candidate by other methods is needed.

\paragraph{Singular Energy Momentum Method (case of $Q$-isotropy only).}

As the action of $\Gaxi$ on the isolated equilibria points of the axisymmetric problem is not locally discrete we can not apply the Energy Momentum method described before. We will use the singular Reduced Energy Momentum method developed in \cite{rodriguez2006stability} to cover that case.
However for our purposes we will only need a particular version of the method that we will briefly outline. Let $(Q,\lmet,\rmet,G,V)$ be a symmetric simple mechanical system and $p_q\in T^*Q$ a relative equilibrium with $\g_{p_q}=\{0\}$, and $\g_q\neq \{0\}$. Define the subspaces
\begin{itemize}
 \item $\mathfrak{t}\subset \g$ such that $\g=\g_q\oplus \g_\mu\oplus \mathfrak{t}$ and $\g_\mu\perp\mathfrak{t}$ with respect to $\II(q)$.
 \item $\mathbf{S}=(\g\cdot q)^\perp \subset T_qQ$ with respect to $\lmet\cdot,\cdot\rmet_q$.
 \item $\mathfrak{q}^\mu:=\{\gamma\in \mathfrak{t}\mid \langle\ad^*_\gamma\mu,\xi\rangle=0,\quad \forall \xi\in \g_q\}$
 \item $\text{corr}_\xi(q)(v_1,v_2)=\big\langle\Proj((\D_{v_1}\II)(\xi)),(\Proj^T\II(q)\Proj)^{-1}\Proj((\D_{v_2}\II)(\xi))\big\rangle$ where $\Proj:\g^*\to \mathfrak{g}_\mu^*\oplus\mathfrak{t}^*$ is the linear projection
 and $\D_v\II:\g\to \g^*$ is the directional derivative of $\II$ with direction $v\in T_qQ$.
 \item $\Sigma:=\{\xi_Q(q)+a\mid \xi\in \mathfrak{q}^\mu,\quad a\in \mathbf{S}\}$.
\end{itemize}
With these definitions the analogue of Proposition \ref{amendedtest} is
\begin{prop}
(Theorem 4.1 of \cite{rodriguez2006stability}) Let $p_q\in T^*Q$ be a relative equilibrium with configuration velocity pair $(q,\xi)\in Q\times \g$. Assume that $\g_{p_q}=\{0\}$ and $\dim(G\cdot q)<\dim (M)$.

If $(\ed^2_q V_\xi+\mathrm{corr}_\xi(q))\restr{\Sigma}$ is positive definite then the relative equilibrium is $G_\mu$-stable. \label{singularVamended}
\end{prop}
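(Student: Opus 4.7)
The plan is to establish $G_\mu$-stability via a Liapunov confinement argument for the flow restricted to a slice transverse to the $G_\mu$-orbit, using a normal-form reduction that produces an effective Hamiltonian on the slice whose Hessian at the equilibrium equals $(\ed^2_q V_\xi+\mathrm{corr}_\xi(q))\restr{\Sigma}$.

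First I would invoke the Marle-Guillemin-Sternberg slice theorem for the cotangent-lifted $G$-action on $T^*Q$. Since $\g_{p_q}=\{0\}$ by hypothesis (even though $\g_q\neq\{0\}$), a neighborhood of the orbit $G\cdot p_q$ in $T^*Q$ is $G$-equivariantly symplectomorphic to a neighborhood of the zero section in $G\times Y$, where $Y$ is a symplectic slice transverse to $G\cdot p_q$. The normal form naturally distinguishes the orthogonal decomposition $\g=\g_q\oplus\g_\mu\oplus\mathfrak{t}$ (with orthogonality taken with respect to $\II(q)$): $\g_q$ is the kernel of $\II(q)$, while $\II(q)$ is invertible on $\g_\mu\oplus\mathfrak{t}$. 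The configuration slice $\mathbf{S}=(\g\cdot q)^\perp\subset T_qQ$ provides the remaining directions of motion.

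Second, I would write the full Hamiltonian in these coordinates and perform a partial Legendre transform. Because $\II(q)$ is degenerate on $\g_q$, the usual amended potential $V_\mu$ is ill-defined; however, restricted to the non-degenerate block $\g_\mu\oplus\mathfrak{t}$ (the image of the projection $\Proj$), the Legendre transform is well-defined. Solving the projected momentum constraint $\Proj\J=\Proj\mu$ for the corresponding fiber variables and substituting back into $h$ produces an effective Hamiltonian on the slice whose quadratic part at the equilibrium is exactly $\ed^2_q V_\xi+\mathrm{corr}_\xi(q)$; the correction term, involving the Schur complement $(\Proj^T\II(q)\Proj)^{-1}$ acting on the directional derivatives $(\D_v\II)(\xi)$, arises precisely from the second-order Taylor expansion of the inverse of this non-degenerate block around $q$. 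Restricting this effective Hessian to $\Sigma=\{\xi_Q(q)+a\mid\xi\in\mathfrak{q}^\mu,\ a\in\mathbf{S}\}$ is the correct choice because $\mathfrak{q}^\mu$ precisely captures the infinitesimal motions in $\mathfrak{t}$ that preserve the coadjoint orbit through $\mu$ while being transverse to the $G_\mu$-orbit, and $\mathbf{S}$ captures the transverse configuration motions.

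Finally, positive definiteness of the restricted effective Hessian on $\Sigma$ gives a strict local minimum of the effective Hamiltonian at the equilibrium modulo the $G_\mu$-action. Its sub-level sets are then compact and $G_\mu$-invariant on a neighborhood, confining trajectories issuing from sufficiently small neighborhoods of $p_q$ to stay near $G_\mu\cdot p_q$, which is precisely the content of Definition \ref{patrick}. The principal technical obstacle is verifying that the second-order expansion from the partial Legendre transform yields exactly the stated $\mathrm{corr}_\xi$; the algebraic bookkeeping of the MGS normal form around a base point with continuous isotropy but discrete phase-space isotropy is delicate and constitutes the technical core of \cite{rodriguez2006stability}.
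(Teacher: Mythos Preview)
The paper does not prove this proposition at all: it is stated in the appendix purely as a quotation of Theorem~4.1 of \cite{rodriguez2006stability}, with no argument given. There is therefore no ``paper's own proof'' to compare your proposal against.

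As a sketch of the argument underlying the cited result, your outline is reasonable in spirit: one does work in a Marle--Guillemin--Sternberg-type normal form around the orbit, exploit the splitting $\g=\g_q\oplus\g_\mu\oplus\mathfrak{t}$ and the degeneracy of $\II(q)$ on $\g_q$, and produce a reduced energy function on a slice whose Hessian is the stated corrected form, after which a standard Liapunov confinement argument yields $G_\mu$-stability. Your identification of the correction term as the Schur-complement contribution coming from inverting only the non-degenerate block of $\II$ is the right intuition. One point to be careful about is that in \cite{rodriguez2006stability} the construction is not literally a partial Legendre transform on $\Proj\J=\Proj\mu$ but rather a careful analysis of the augmented Hamiltonian $h_\xi$ on a symplectic slice adapted to the cotangent-bundle structure; the space $\Sigma$ and the correction $\mathrm{corr}_\xi$ arise from that specific slice construction, and verifying that your Legendre-transform description reproduces exactly the same bilinear form would require checking the details of that reference rather than the present paper.
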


\end{document}